\newtheorem{theo}{Theorem}[section]
\newtheorem{prop}{Proposition}[section]
\newtheorem{coro}{Corollary}[section]
\newtheorem{lem}{ Lemma}[section]
\newtheorem{definition}{Definition}
\newtheorem{propdef}{Proposition-Definition}
\newtheorem{step}{Step}[section]
\definecolor{darkblue}{HTML}{004C93} 
\def \rr {\mathbb{R}}
\def \crit{2^{\star}(s)}
\def \na{\mathbb{\mu_{\alpha}}}
\def \vv {\mathbb{\Vert}}	
\def \rn {\mathbb{R}^n}
\def \sn {\mathbb{S}^{n-1}}
\def \nn {\mathbb{N}}
\def \ll {\mathbb{\lambda}}
\def \ep {\epsilon}
\def \bb {\hbox}
\def \ma {\mu_{\alpha}}
\def \uta  {\tilde{u}_{\alpha}}
\def \ua  {u_{\alpha}}
\def \gta {\tilde{g}_{\alpha}}
\def \td {\theta_{R}}
\def \uha  {\hat{u}_{\alpha}}
\def \nua {\nu_{\alpha}}
\def \utza  {\tilde{u}_{\alpha}}
\def \ut  {\tilde{u}}
\date{June 23th, 2020}
\title[The second best constant for the Hardy-Sobolev inequality]{The second best constant for the Hardy-Sobolev inequality on manifolds}
\author{Hussein Cheikh Ali}
\address{D\'epartement de Math\'ematiques, Universit\'e libre de Bruxelles, CP 214, Boulevard du Triomphe, B-1050 Bruxelles, Belgium}
\email{Hussein.cheikh-ali@ulb.ac.be}
\begin{document}
	
\begin{abstract} We consider the second best constant in the Hardy-Sobolev inequality on a Riemannian manifold. More precisely, we are interested with the existence of extremal functions for this inequality. This problem was tackled by Djadli-Druet \cite{DD} for Sobolev inequalities. Here, we establish the corresponding result for the singular case. In addition, we perform a blow-up analysis of solutions Hardy-Sobolev equations of minimizing type. This yields informations on the value of the second best constant in the related Riemannian functional inequality. 
		\end{abstract}
		\maketitle

	\tableofcontents

		\section{Introduction}
	Let $(M,g)$ be a compact Riemannian manifold of dimension $n\geq3$ without boundary, $d_g$ be the Riemannian distance on $M$ and $H_1^2(M)$ be the completion of $C^\infty(M)$ for the norm $u\mapsto \Vert u\Vert_2+\Vert\nabla u\Vert_2$. We fix $x_0\in M$, $s\in \left[0,2\right)$ and we let $\crit:= \frac{2(n-s)}{n-2}$ be the critical Hardy-Sobolev exponent. Here, and in the sequel, we set
	$$u\to \vv u \vv_{p,s}:= \left( \int_M |u|^{p}d_g(\cdot,x_0)^{-s}\, dv_g\right)^{\frac{1}{p}}$$ and we define $L^p\left(M, d_g(x,x_0)^{-s}\right)=\{u\in L^1(M)/\, \Vert u\Vert_{p,s}<\infty\}$ where $dv_g$ is the Riemannian element of volume. The  Hardy-Sobolev embedding theorem $H_1^2(M)\hookrightarrow L^{\crit}\left(M, d_g(x,x_0)^{-s}\right)$ yields $A,B>0$ such that
\begin{equation}\label{ineq:AB}
\vv u \vv_{\crit,s}^2\leq A\vv \nabla u \vv_{2}^2+B\vv  u \vv_{2}^2
\end{equation}
for all $u\in H_1^2(M)$.  Discussions on the Hardy and Hardy-Sobolev inequalities are in Ghoussoub-Moradifam \cite{GM}.  When $s=0$, this is the classical Sobolev inequality, and extensive discussions on the optimal values of the constants are in the monograph Druet-Hebey \cite{DH:AB}. It was proved by Hebey-Vaugon \cite{HV} (the classical case $s=0$) and by Jaber \cite{J2} ($s\in (0,2)$) that 
$$\mu_s(\rr^n)^{-1}=\inf\{A>0\hbox{ s.t. } \bb{there exists }B>0\hbox{ for which } \eqref{ineq:AB} \bb{ is true}\},$$
and that the infimum is achieved, where
$$\mu_s(\rr^n)=\inf\left\lbrace\frac{\int_{\rr^n}|\nabla u|^2\,dX}{\left(\int_{\rr^n}\frac{|u|^{\crit}}{|X|^{s}}\,dX \right)^{\frac{2}{\crit}}}, u\in C^{\infty}_{c}(\rr^n)\right\rbrace $$	
is the best constant in the Hardy-Sobolev inequality (see Lieb \cite{lieb} Theorem 4.3 for the value). Therefore, there exists $B>0$ such that
\begin{equation}\label{ineq:B}
\vv u \vv_{\crit,s}^2\leq \mu_s(\rr^n)^{-1}\left(\vv \nabla u \vv_{2}^2+B\vv  u \vv_{2}^2\right)
\end{equation}
for all $u\in H_1^2(M)$. Saturating this inequality with repect to $B$, we define the second best constant as
$$B_s(g):=\inf\{B>0\hbox{ s.t. }\eqref{ineq:B}\hbox{ holds for all }u\in H_1^2(M)\},$$
to get the optimal inequality
\begin{equation}\label{ineq:opt}
\vv u \vv_{\crit,s}^2\leq \mu_s(\rr^n)^{-1}\left(\vv \nabla u \vv_{2}^2+B_s(g)\vv  u \vv_{2}^2\right)
\end{equation}
for all $u\in H_1^2(M)$. A remark is that it follows from the analysis of Jaber \cite{J1} that
\begin{equation*}
\left\{\begin{array}{ll}
B_{s}(g)\geq  c_{n,s}Scal_g(x_0)  &\bb{ if } n\geq 4;\\\\
\hbox{The mass of }\Delta_g+B_s(g)\hbox{ is nonpositive}& \bb{ if } n=3 ,
\end{array}\right.
\end{equation*}
where $Scal_g(x_0)$ is the scalar curvature at $x_0$ and
\begin{equation}\label{eq:cns}
c_{n,s}:=\frac{(n-2)(6-s)}{12(2n-2-s)},
\end{equation}
and the mass  will be defined in Proposition-Definition \ref{def:mass:beta}.

\smallskip\noindent In this paper, we are interested in the value of the second best constant and the existence of extremal functions for the inequality \eqref{ineq:opt}:
\begin{definition}\label{def:extremal}
We say that $u_0\in H_1^2(M)$,  $u_0\not\equiv 0$ is an extremal for \eqref{ineq:opt} if 
\begin{equation*}
\vv u_0 \vv_{\crit,s}^2= \mu_s(\rr^n)^{-1}\left(\vv \nabla u_0 \vv_{2}^2+B_s(g)\vv  u_0 \vv_{2}^2\right).
\end{equation*}
\end{definition}
\smallskip \noindent When $s=0$, the existence of extremals has been studied by Druet and al.:

\begin{theo}[The case $s=0$, \cites{DD,D}] Let $(M,g)$ be a compact Riemannian manifold of dimension $n\geq 3$. Assume that $s=0$ and that there is no extremal for \eqref{ineq:opt}. Then
\begin{itemize}
\item $B_{0}(g)=c_{n,0}\max_M \hbox{Scal}_g(x_0)$ if $n\geq 4$;
\item The mass of $\Delta_g+B_{0}(g)$ vanishes if $n=3$,
\end{itemize}
where $c_{n,0}$ is defined in \eqref{eq:cns}.
\end{theo}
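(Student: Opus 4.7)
My plan is to argue by contradiction following the subcritical blow-up scheme of Druet. Assume \eqref{ineq:opt} admits no extremal when $s=0$, and set $2^\star=\frac{2n}{n-2}$. For each $q<2^\star$, compactness of the embedding $H_1^2(M)\hookrightarrow L^q(M)$ produces a smooth positive minimizer $u_q$, normalized by $\|u_q\|_q=1$, of the functional
\[
I_q(u):=\|\nabla u\|_2^2+B_0(g)\|u\|_2^2,
\]
which solves $\Delta_g u_q+B_0(g)u_q=\lambda_q u_q^{q-1}$ with $\lambda_q\to\mu_0(\rr^n)^{-1}$. If the family $(u_q)_{q\uparrow 2^\star}$ were pre-compact in $H_1^2(M)$, a limit point would be an extremal of \eqref{ineq:opt}, contradicting the hypothesis. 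Hence $u_q$ must concentrate: there exist $x_q\to x_0\in M$ and $\mu_q:=u_q(x_q)^{-2/(n-2)}\to 0$ such that the rescaling $\tilde u_q(y):=\mu_q^{(n-2)/2}u_q(\exp_{x_q}(\mu_q y))$ converges in $C^2_{\rm loc}(\rr^n)$ to a standard Aubin--Talenti bubble.

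The key technical step is to promote this local convergence to sharp global control. Following Druet--Hebey--Robert, a combination of Green function estimates, Moser iteration, and a Pohozaev-type identity yields the pointwise bound
\[
u_q(x)\leq C\mu_q^{(n-2)/2}\bigl(\mu_q^2+d_g(x,x_q)^2\bigr)^{-(n-2)/2}\quad\text{on }M,
\]
together with matching control on $\nabla u_q$. Plugging $u_q$ into \eqref{ineq:opt} and expanding both sides in conformal normal coordinates at $x_q$ to the order below the leading singularity, the Taylor expansion of $g$ contributes a correction proportional to $\mathrm{Scal}_g(x_q)$. For $n\geq 4$ this produces
\[
0\leq\bigl(B_0(g)-c_{n,0}\mathrm{Scal}_g(x_q)\bigr)P_n(\mu_q)+o(P_n(\mu_q)),
\]
for an explicit positive factor $P_n(\mu_q)$ (carrying a logarithmic term when $n=4$), which forces $B_0(g)\geq c_{n,0}\mathrm{Scal}_g(x_q)$. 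Testing \eqref{ineq:opt} against cut-off Aubin--Talenti bubbles centered where $\mathrm{Scal}_g$ is maximal yields the matching upper bound, hence $B_0(g)=c_{n,0}\max_M\mathrm{Scal}_g$.

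In dimension $n=3$ the scalar curvature correction to the next order vanishes, and the expansion is governed instead by the mass $m$ of $\Delta_g+B_0(g)$ at $x_0$ (as in Proposition-Definition \ref{def:mass:beta}). Pushing the blow-up one order further yields $m\leq 0$, while a direct test function argument using the Green's function of $\Delta_g+B_0(g)$ gives $m\geq 0$, so the mass vanishes. The main obstacle throughout is the derivation of the sharp pointwise decay for $u_q$ and its gradient: Lions-type concentration-compactness is too coarse for the next-order analysis, and the required estimates demand a careful Green function comparison together with a cut-off Pohozaev identity, exactly in the spirit of \cites{DD,D}. The three-dimensional case is even more delicate, since it ultimately connects to a positive mass statement for the operator $\Delta_g+B_0(g)$.
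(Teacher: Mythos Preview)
This theorem is cited from \cite{DD,D} and is not proved in the present paper, so there is no in-paper argument to compare against directly. Your subcritical scheme is indeed the one used in \cite{DD}; by contrast, for its own $s>0$ result the paper perturbs the potential $a_\alpha=B_s(g)-\alpha^{-1}$ at the critical exponent and extracts the scalar-curvature relation via a Pohozaev identity (Section~\ref{sec:pohozaev}) rather than via an energy expansion.

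There is, however, a genuine gap: you have the two inequalities reversed. Testing \eqref{ineq:opt} with Aubin--Talenti bubbles centered at a point $p$ gives, after the standard expansion, $B_0(g)\geq c_{n,0}\,\mathrm{Scal}_g(p)$; maximizing over $p$ yields the \emph{lower} bound $B_0(g)\geq c_{n,0}\max_M\mathrm{Scal}_g$ (this is precisely the a~priori bound the paper records just before the statement). It does not give an upper bound. Likewise, plugging the concentrating $u_q$ into the valid inequality \eqref{ineq:opt} can only reproduce the same weak direction $B_0(g)\geq c_{n,0}\,\mathrm{Scal}_g(x_q)$, which is useless since nothing forces $x_q$ to converge to a maximum point of $\mathrm{Scal}_g$. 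The actual content of the blow-up step must be the \emph{opposite} inequality $B_0(g)\leq c_{n,0}\,\mathrm{Scal}_g(x_0)$, and this does not follow from substituting into \eqref{ineq:opt}: one needs either the minimality $\lambda_q\leq I_q(v)$ against a well-chosen competitor combined with a sharp lower expansion of $\lambda_q=I_q(u_q)$, or a Pohozaev identity applied to the Euler--Lagrange equation (the paper's route), which yields $B_0(g)=c_{n,0}\,\mathrm{Scal}_g(x_0)$ directly. The same reversal occurs in your $n=3$ sketch: the Green-function test shows the mass is \emph{nonpositive}, and it is the blow-up analysis that must force it to be nonnegative.

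A minor point: with the paper's normalization $\mu_0(\rr^n)=\inf\|\nabla u\|_2^2/\|u\|_{2^\star}^2$, one has $\lambda_q\to\mu_0(\rr^n)$, not $\mu_0(\rr^n)^{-1}$.
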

We establish the corresponding  result for the singular case $s\in (0,2)$:
\begin{theo}[The case $s>0$]\label{theo:B_s(g)} Let $(M,g)$ be a compact Riemannian manifold of dimension $n\geq 3$. We fix $x_0\in M$ and $s\in (0,2)$. We assume that there is no extremal for \eqref{ineq:opt}. Then
\begin{itemize}
\item $B_s(g)=c_{n,s} \hbox{Scal}_g(x_0)$ if $n\geq 4$;
\item The mass of $\Delta_g+B_s(g)$ vanishes if $n=3$,
\end{itemize}
where $c_{n,s}$ is defined in \eqref{eq:cns}.
\end{theo}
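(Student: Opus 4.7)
The plan is to argue by contradiction via a subcritical approximation, in the spirit of Djadli-Druet \cite{DD}. For each $\alpha\in(2,\crit)$ set
\[
\lambda_\alpha := \inf\Bigl\{ \Vert\nabla u\Vert_2^2 + B_s(g)\Vert u\Vert_2^2 : u\in H_1^2(M),\ \Vert u\Vert_{\alpha,s}=1 \Bigr\}.
\]
Since the embedding $H_1^2(M)\hookrightarrow L^\alpha(M,d_g(\cdot,x_0)^{-s})$ is compact for $\alpha<\crit$, this infimum is attained by a positive minimizer $\ua$ solving
\[
\Delta_g\ua + B_s(g)\ua = \lambda_\alpha\,\frac{\ua^{\alpha-1}}{d_g(\cdot,x_0)^s}.
\]
By the very definition of $B_s(g)$ in \eqref{ineq:opt} one has $\lambda_\alpha\to\mu_s(\rn)$ as $\alpha\uparrow\crit$. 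Under the assumption that no extremal exists, any weak $H_1^2$-limit of $\ua$ must be zero (otherwise a standard argument produces an extremal), so the family concentrates.

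Next I would carry out the blow-up analysis. Set $\ma := \Vert\ua\Vert_\infty^{-(\alpha-2)/2}$, so $\ma\to 0$. The Hardy-Sobolev weight forces the concentration to occur at $x_0$: away from $x_0$ the equation has bounded coefficients and subcritical limiting energy, which precludes blow-up there. The rescalings $\uta(x):=\ma^{(n-2)/2}\ua(\exp_{x_0}(\ma x))$ then converge in $C^2_{loc}(\rn)$ to the positive radial extremal $U$ of the Euclidean Hardy-Sobolev inequality, which solves $-\Delta U = \mu_s(\rn)^{-1}U^{\crit-1}/|x|^s$ and whose explicit form is well known.

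The decisive step is a sharp pointwise bound of Druet-Hebey-Robert type,
\[
\ua(x)\leq C\,\frac{\ma^{(n-2)/2}}{\bigl(\ma^2+d_g(x,x_0)^2\bigr)^{(n-2)/2}}\qquad\text{for all } x\in M,
\]
established via weighted super-solutions and a Moser iteration adapted to the Hardy potential. With this in hand, I would multiply the equation for $\ua$ by $\ua$ itself (or by a localized truncation of $\uta$), integrate on $M$, and expand the metric to second order in normal coordinates at $x_0$. For $n\geq 4$, the coefficient of $\ma^2$ in the resulting identity reduces, up to a positive universal factor, to $\bigl(B_s(g)-c_{n,s}\mathrm{Scal}_g(x_0)\bigr)\int_{\rn}U^2\,dX$; matching this with the expansion of $\lambda_\alpha - \mu_s(\rn)$ forces the coefficient to vanish, which, in view of the reverse inequality recalled after \eqref{eq:cns}, yields $B_s(g)=c_{n,s}\mathrm{Scal}_g(x_0)$. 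For $n=3$, $\int_{\rn}U^2$ diverges and the leading correction is instead captured by matching inner and outer asymptotics against the Green's function of $\Delta_g+B_s(g)$; the obstruction appearing in this matching is precisely the mass of Proposition-Definition \ref{def:mass:beta}, which must therefore vanish.

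The principal technical difficulty is the sharp pointwise control of $\ua$: because the concentration point coincides with the Hardy-Sobolev pole, the linearized operator at the bubble carries a singular potential, and the standard Druet-Hebey-Robert barrier arguments must be reworked with weighted super-solutions built from $U$ and the Green's function of $\Delta_g+B_s(g)$. A second delicate point is the expansion step itself: one has to choose a Pohozaev-type multiplier adapted to the Hardy-Sobolev scaling so that the boundary contributions on small geodesic spheres around $x_0$ collapse cleanly onto the curvature (respectively, mass) term. The case $n=3$ is the most subtle, as the relevant integrals are only conditionally convergent and a precise matched asymptotic expansion is required to extract the mass.
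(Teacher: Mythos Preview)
Your sketch is on the right track and would lead to the theorem, but the approximation scheme differs from the paper's. You approach the problem by lowering the exponent to subcritical $\alpha<\crit$ while keeping the potential fixed at $B_s(g)$; the paper instead keeps the exponent critical and perturbs the potential, setting $a_\alpha:=B_s(g)-\tfrac{1}{\alpha}$. Since $a_\alpha<B_s(g)$, the Rayleigh quotient $J_\alpha$ has infimum $\lambda_\alpha<\mu_s(\rn)$, and Jaber's existence criterion \cite{J1} then yields a minimizer $u_\alpha$ solving \eqref{m2} \emph{at the critical exponent}. One checks $u_\alpha\rightharpoonup 0$ (else the limit would be an extremal) and $\lambda_\alpha\to\mu_s(\rn)$, and Theorem~\ref{th1} applies directly, giving $a_\infty(x_0)=B_s(g)=c_{n,s}\,Scal_g(x_0)$ (resp.\ vanishing mass).

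The paper's route has the advantage that the equation carries the fixed critical exponent throughout, so the blow-up analysis (Theorem~\ref{prop1}) and the Pohozaev computation of Section~\ref{sec:pohozaev} apply without tracking a moving exponent; in your scheme the Pohozaev identity acquires an extra term from the non-scale-invariance of $t\mapsto t^\alpha$ which you must show is $o(\ma^2)$. Your route, in turn, gets existence of $u_\alpha$ for free by compactness. Two small slips to flag: with the Hardy weight the correct subcritical rescaling is $\ma=\Vert u_\alpha\Vert_\infty^{-(\alpha-2)/(2-s)}$, not $\Vert u_\alpha\Vert_\infty^{-(\alpha-2)/2}$; and the decisive expansion comes from the Pohozaev multiplier $X^l\partial_l\uha+\tfrac{n-2}{2}\uha$ (as in Section~\ref{sec:pohozaev}), not from multiplying by $u_\alpha$, which only returns the energy identity and does not isolate the curvature/mass obstruction.
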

\medskip\noindent Our proof relies on the blow-up analysis of critical elliptic equations in the spirit of Druet-Hebey-Robert \cite{DHR}. Let $(a_{\alpha})_{\alpha\in\nn}\in C^1(M)$ be such that
\begin{eqnarray}\label{m0}
	\lim_{\alpha\to +\infty} a_{\alpha}= a_{\infty} \bb{ in } C^1(M).
\end{eqnarray}
We assume uniform coercivity, that is there exists $c_0>0$ such that 
\begin{equation}\label{m0bis}
	\int_M\left( |\nabla w|_g^2 +a_{\alpha}w^2\right)\  dv_g \geq c_0 \int_M w^2 \,dv_g \bb{ for all } w \in  H^2_1(M).
\end{equation}
Note that this is equivalent to the coercivity of $\Delta_g+a_{\infty}$. We consider $(\ll_{\alpha})_{\alpha}\in (0,+\infty)$ such that 
\begin{eqnarray}\label{m1}
	\lim_{\alpha\to +\infty} \ll_{\alpha}=\mu_s(\rr^n).
\end{eqnarray}
We let  $(u_{\alpha})_{\alpha}\in H^2_1(M)$ is a sequence of weak solutions to 
\begin{equation}\label{m2}
\left\{\begin{array}{ll}
\Delta_g  u_{\alpha} + a_{\alpha}   u_{\alpha}=\lambda_{\alpha}\frac{ u_{\alpha}^{\crit-1}}{d_g(x,x_0)^s}&\hbox{ in } M, \\
u_\alpha\geq 0  &\hbox{ a.e. in } M,
\end{array}\right.
\end{equation}
where $\Delta_g:=-div_g(\nabla)$ is the Laplace-Beltrami operator. We assume that
\begin{equation}\label{m7}
	\vv u_{\alpha}\vv_{\crit,s}=1,
\end{equation}
and that
\begin{eqnarray}\label{m3}
	u_{\alpha}\rightharpoonup 0 \bb{ as } \alpha \to +\infty \bb{ weakly in  } H^2_1(M).
\end{eqnarray}
It follows from the regularity and the maximum principle of Jaber \cite{J1} that $u_{\alpha}\in C^{0,\beta_1}(M)\cap C_{loc}^{2,\beta_2}(M\backslash\{x_0\})$, $\beta_1\in (0,\min(1,2-s))$, $\beta_2\in (0,1)$ and $u_{\alpha}>0$. Therefore, since $M$ is compact, there exists $x_{\alpha}\in M$ and $\na>0$ such that 
\begin{equation}\label{m00}
	\na:=\left( \max_{M}u_{\alpha}\right) ^{-\frac{2}{n-2}}=\left( 
	u_{\alpha}(x_{\alpha})\right) ^{-\frac{2}{n-2}}.
\end{equation}
We prove two descriptions of the asymptotics of $(u_\alpha)$:
\begin{theo}\label{prop1}	Let $M$ be a compact Riemannian manifold of dimension $n\geq 3$. We fix $x_0\in M$ and $s\in (0,2)$. Let $(a_{\alpha})_{\alpha\in\nn}\in C^1(M)$ and $a_\infty\in C^1(M)$ be such that \eqref{m0} holds and  $\Delta_g+a_\infty$ is coercive in $M$. In addition, we suppose that $(\ll_{\alpha})_\alpha\in\rr$ and $(u_{\alpha})_\alpha\in H^2_1(M)$ be such that \eqref{m0} to \eqref{m00} hold for all $\alpha\in\nn$. Then, there exists $C>0$ such that,
	\begin{equation}\label{ineq:up}
	u_{\alpha}(x)\leq C\,  \frac{\ma^{\frac{n-2}{2}}}{\ma^{n-2}+d_g(x,x_0)^{n-2}}\bb{ for all } x\in M,
	\end{equation}
	where $\ma \to 0$ as $\alpha\to +\infty$ is as in \eqref{m00}.
\end{theo}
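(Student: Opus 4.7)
The plan is to adapt the blow-up analysis of Druet-Hebey-Robert \cite{DHR} and its Hardy-Sobolev variant in Jaber \cite{J1}. The argument divides into three steps: extraction of the standard bubble, a weak pointwise decay estimate, and finally the sharp estimate by comparison with an explicit supersolution.

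First, I would rescale at the singular point by setting $\tilde u_\alpha(X):=\mu_\alpha^{(n-2)/2}u_\alpha(\exp_{x_0}(\mu_\alpha X))$ on $B_{\delta/\mu_\alpha}(0)\subset\rr^n$. Pulling back \eqref{m2} via $\exp_{x_0}$ yields an equation whose metric converges to the Euclidean one, whose lower-order term $\mu_\alpha^2 a_\alpha$ vanishes in the limit, and whose right-hand side is $\lambda_\alpha\tilde u_\alpha^{\crit-1}/|X|_{\tilde g_\alpha}^s$. The coercivity \eqref{m0bis} and normalization \eqref{m7} give uniform $H^2_1$-bounds, so elliptic regularity and the maximum principle of Jaber \cite{J1} produce a limit $U$ in $C^0_{loc}(\rr^n)\cap C^2_{loc}(\rr^n\setminus\{0\})$ which, by Lieb's classification, is an extremal of $\mu_s(\rr^n)$ centred at the origin. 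In particular $d_g(x_\alpha,x_0)=O(\mu_\alpha)$, and the normalization $\|u_\alpha\|_{\crit,s}=1$ rules out the formation of any second bubble.

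Second, I would establish the weak decay
\begin{equation*}
\lim_{R\to+\infty}\limsup_{\alpha\to+\infty}\sup_{d_g(x,x_0)\geq R\mu_\alpha}d_g(x,x_0)^{\frac{n-2}{2}}u_\alpha(x)=0
\end{equation*}
by a second-point rescaling argument. If the limit were positive, pick $y_\alpha$ almost realizing the supremum and set $\nu_\alpha:=u_\alpha(y_\alpha)^{-\frac{2}{n-2}}$. Rescaling $u_\alpha$ at $y_\alpha$ at scale $\nu_\alpha$ produces, according to whether $d_g(y_\alpha,x_0)/\nu_\alpha\to+\infty$ or stays bounded, a nontrivial solution of the Yamabe equation on $\rr^n$ or a second Hardy-Sobolev bubble centred at $0$; either case yields extra $L^{\crit,s}$-mass and contradicts the no-second-bubble conclusion of Step~1.

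Finally, I would upgrade this to the sharp bound by comparing $u_\alpha$ with the model supersolution $\Psi_\alpha(x):=\mu_\alpha^{(n-2)/2}/(\mu_\alpha^{n-2}+d_g(x,x_0)^{n-2})$. Using Step~2, the singular potential $\lambda_\alpha u_\alpha^{\crit-2}d_g(\cdot,x_0)^{-s}$ is pointwise dominated by $\varepsilon_\alpha d_g(\cdot,x_0)^{-2}$ with $\varepsilon_\alpha\to 0$ on the annulus $\{R\mu_\alpha\leq d_g(x,x_0)\leq\delta\}$; combined with the coercivity of $\Delta_g+a_\infty$, a direct computation then shows that $C\Psi_\alpha$ is a supersolution of $\Delta_g+a_\alpha-\lambda_\alpha u_\alpha^{\crit-2}d_g(\cdot,x_0)^{-s}$ on that annulus, and the maximum principle gives $u_\alpha\leq C\Psi_\alpha$ there. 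On $\{d_g(x,x_0)\leq R\mu_\alpha\}$ the bound follows from the $C^0_{loc}$ convergence $\tilde u_\alpha\to U$ and the explicit decay of $U$, while on $\{d_g(x,x_0)\geq\delta\}$ it follows from standard $L^\infty$-bounds away from $x_0$. The main obstacle is this last comparison step: the sharpness of the $\frac{n-2}{2}$-decay produced in Step~2 is precisely what is needed to absorb the singular factor $d_g(\cdot,x_0)^{-s}$ into the coercive part of $\Delta_g+a_\infty$, and this is the point at which the Hardy-Sobolev case genuinely departs from the classical $s=0$ analysis.
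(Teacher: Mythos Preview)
Your first two steps are essentially the paper's own Steps~2.1--2.5, and they are fine. The gap is in your third step: the function $\Psi_\alpha(x)=\mu_\alpha^{(n-2)/2}\bigl(\mu_\alpha^{n-2}+d_g(x,x_0)^{n-2}\bigr)^{-1}$ is \emph{not} a supersolution of $L_\alpha:=\Delta_g+a_\alpha-\lambda_\alpha u_\alpha^{\crit-2}d_g(\cdot,x_0)^{-s}$ on the full annulus $\{R\mu_\alpha\le d_g(x,x_0)\le\delta\}$. A direct computation in the exponential chart gives, for $r=d_g(x,x_0)\gg\mu_\alpha$,
\[
\frac{\Delta_g\Psi_\alpha}{\Psi_\alpha}(x)\;\sim\;2(n-2)^2\,\frac{\mu_\alpha^{n-2}}{r^{n}}\;=\;\frac{2(n-2)^2}{r^2}\Bigl(\frac{\mu_\alpha}{r}\Bigr)^{n-2},
\]
whereas your Step~2 only yields $V_\alpha(x):=\lambda_\alpha u_\alpha^{\crit-2}d_g(x,x_0)^{-s}\le C\varepsilon_R^{\,\crit-2}r^{-2}$. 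In the intermediate regime $\mu_\alpha\ll r\ll 1$ the factor $(\mu_\alpha/r)^{n-2}$ collapses to $0$ while $\varepsilon_R$ is fixed, so $\Delta_g\Psi_\alpha/\Psi_\alpha$ cannot absorb $V_\alpha$; and the zero--order term $a_\alpha$ (which is merely bounded, and for which only \emph{integral} coercivity is assumed) does not help either. This is not a peculiarity of the Hardy--Sobolev case: the same obstruction is present for $s=0$.

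The paper avoids this by a two--stage bootstrap. First (Step~3.1) it compares $u_\alpha$ with $G_{x_0}^{1-\nu}$, where $G_{x_0}$ is the Green function of a slightly sub--coercive operator; the point is that
\[
\frac{\Delta_g G_{x_0}^{1-\nu}}{G_{x_0}^{1-\nu}}\;=\;\nu(1-\nu)\Bigl|\frac{\nabla G_{x_0}}{G_{x_0}}\Bigr|_g^2+\text{(bounded)}\;\ge\;\frac{c\,\nu(1-\nu)}{r^2}+\text{(bounded)},
\]
and this $r^{-2}$ term genuinely dominates $V_\alpha\le C\varepsilon_R^{\,\crit-2}r^{-2}$ for $R$ large, yielding the almost--sharp bound $u_\alpha\le C_\epsilon\,\mu_\alpha^{(n-2)/2-\epsilon}d_g(x,x_0)^{-(n-2-\epsilon)}$. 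Only then (Step~3.2) does the paper feed this almost--sharp estimate into the Green representation formula, splitting $M$ into dyadic shells around $x_0$ and $y_\alpha$, to close the gap and obtain the sharp bound~\eqref{ineq:up}. If you want a one--step comparison, the correct barrier is $G_{x_0}^{1-\nu}$ (or equivalently $d_g(\cdot,x_0)^{-(n-2)(1-\nu)}$), not $\Psi_\alpha$.
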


\begin{theo}\label{th1} Let $M$ be a compact Riemannian manifold of dimension $n\geq 3$. We fix $x_0\in M$ and $s\in (0,2)$. Let $(a_{\alpha})_{\alpha\in\nn}\in C^1(M)$ and $a_\infty\in C^1(M)$ be such that \eqref{m0} holds and  $\Delta_g+a_\infty$ is coercive in $M$. In addition, we suppose that $(\ll_{\alpha})_\alpha\in\rr$ and $(u_{\alpha})_\alpha\in H^2_1(M)$ be such that \eqref{m0} to \eqref{m00} hold for all $\alpha\in\nn$. Then, 
\begin{enumerate}
	\item If $n\geq 4$, then $a_{\infty}(x_0)=c_{n,s} Scal_g(x_0)$.
	\item If $n=3$, then $m_{a_{\infty}}(x_0)=0$,
\end{enumerate}
where $m_{a_{\infty}}(x_0)$ is the mass of the operator $\Delta_g+a_{\infty}$ (see Proposition-Definition \ref{def:mass:beta}) and $c_{n,s}$ is defined in \eqref{eq:cns}.
\end{theo}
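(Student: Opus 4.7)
The plan is to exploit the precise pointwise control from Theorem \ref{prop1} together with a Pohozaev-type identity applied on a small geodesic ball around $x_0$, whose leading terms will be matched to read off $a_\infty(x_0)$. As a preliminary, I would first upgrade the concentration picture: using \eqref{ineq:up} together with \eqref{m7}, one shows $x_\alpha\to x_0$ and $\mu_\alpha\to 0$, and that the rescaled sequence $\tilde u_\alpha(x):=\mu_\alpha^{\frac{n-2}{2}}u_\alpha(\exp_{x_0}(\mu_\alpha x))$ converges in $C^2_{loc}(\rr^n\setminus\{0\})$ (and weakly in $H^2_1$) to the unique positive extremal $U$ of $\mu_s(\rr^n)$, solving $\Delta_{\text{eucl}}U=\mu_s(\rr^n)U^{\crit-1}|x|^{-s}$ on $\rr^n$. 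All subsequent expansions are written in $g$-normal coordinates at $x_0$, where the metric expands as $g_{ij}(x)=\delta_{ij}+\frac{1}{3}R_{ikjl}x^kx^l+O(|x|^3)$ and $\sqrt{|g|}(x)=1-\frac{1}{6}\text{Ric}_{ij}x^ix^j+O(|x|^3)$, so that $\text{Scal}_g(x_0)$ enters as a second-order correction.

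Next I would write the Pohozaev identity on the Euclidean ball $B_\delta(0)\subset \rr^n$ (identified with a normal chart around $x_0$) for the equation \eqref{m2}, multiplied by $x^k\partial_k u_\alpha+\frac{n-2}{2}u_\alpha$, using the Euclidean Laplacian $\Delta_{\text{eucl}}$ rather than $\Delta_g$. This splits the identity into four packets:
\begin{enumerate}
\item a boundary integral involving $u_\alpha$, $\partial_\nu u_\alpha$ on $\partial B_\delta$;
\item the \emph{nonlinear} bulk integral $\lambda_\alpha\int_{B_\delta}\left(x^k\partial_k u_\alpha+\frac{n-2}{2}u_\alpha\right)\frac{u_\alpha^{\crit-1}}{|x|^s}\,dx$, which, via the identity $x\cdot\nabla(u^{\crit}|x|^{-s})/\crit+(n-s)u^{\crit}|x|^{-s}/\crit=\frac{n-2}{2}u\cdot u^{\crit-1}|x|^{-s}$, collapses to a boundary term;
\item the \emph{potential} bulk integral $\int_{B_\delta}a_\alpha u_\alpha(x^k\partial_k u_\alpha+\frac{n-2}{2}u_\alpha)\,dx$, which after integration by parts becomes $-\frac{1}{2}\int_{B_\delta}(x^k\partial_k a_\alpha+2a_\alpha)u_\alpha^2\,dx$ plus a $u_\alpha^2$ boundary term;
\item the \emph{geometric error} $\int_{B_\delta}(\Delta_g-\Delta_{\text{eucl}})u_\alpha\cdot(x^k\partial_k u_\alpha+\frac{n-2}{2}u_\alpha)\,dx$, which by the metric expansion produces, at leading order in $\mu_\alpha$, a term proportional to $\text{Scal}_g(x_0)\int U^2|x|^{-s}$ (or $|x|^{2-s}$ weighted) integrated against the bubble.
\end{enumerate}

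The third paragraph is where the two dimensions bifurcate. For $n\geq 4$: rescaling by $x\mapsto \mu_\alpha x$, both the potential term and the geometric error are of order $\mu_\alpha^2$ (since $\int_{\rr^n}U^2\,dx<+\infty$ for $n\geq 5$, and $=O(\mu_\alpha^2\log\mu_\alpha^{-1})$ borderlines must be chased for $n=4$, but the pointwise bound \eqref{ineq:up} still gives the correct rate). The nonlinear and principal boundary terms, evaluated via the $C^2_{loc}$-convergence to $U$ and the decay of $U$, are of strictly smaller order. Dividing through by the common factor $\mu_\alpha^2$ and letting $\alpha\to+\infty$ then $\delta\to 0$, one obtains a clean linear relation of the shape $a_\infty(x_0)\cdot I_1(U)=c_{n,s}^{-1}a_\infty(x_0)\cdot I_1(U)\cdot\text{Scal}_g(x_0)/a_\infty(x_0)$; after computing the explicit ratio of the two integrals $\int U^2$ and the scalar-curvature integral (which is where $c_{n,s}=(n-2)(6-s)/[12(2n-2-s)]$ comes from), this forces $a_\infty(x_0)=c_{n,s}\text{Scal}_g(x_0)$. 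For $n=3$: the integral $\int U^2$ diverges, so the potential term must be renormalized against the Green's function of $\Delta_g+a_\infty$. More precisely, multiplying the equation by the Green's function $G_{a_\infty}(x_0,\cdot)$ and integrating against it (or equivalently computing $\mu_\alpha^{-(n-2)/2}u_\alpha\to \lambda_\infty G_{a_\infty}(x_0,\cdot)$ up to a constant, which is standard in the Druet–Hebey–Robert framework) replaces the scalar-curvature computation by the asymptotics $G_{a_\infty}(x_0,x)=(4\pi d_g(x_0,x))^{-1}+m_{a_\infty}(x_0)+o(1)$, so that the matching identity in the Pohozaev balance becomes $m_{a_\infty}(x_0)=0$.

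The main obstacle I anticipate is paragraph two's bookkeeping: one must show that \emph{all} terms not appearing in the final identity are genuinely $o(\mu_\alpha^2)$ (resp.\ $o(\mu_\alpha)$ for $n=3$), which requires combining the pointwise bound \eqref{ineq:up} with the improved $C^2_{loc}$-convergence of $\tilde u_\alpha$ to $U$ outside $0$, and carefully choosing the order in which $\alpha\to\infty$ and $\delta\to 0$ are taken. The other delicate point is the $n=3$ case, where identifying the mass requires proving that $\mu_\alpha^{-(n-2)/2}u_\alpha$ converges, away from $x_0$, to a constant multiple of $G_{a_\infty}(x_0,\cdot)$—this uses coercivity \eqref{m0bis}, the Green's representation formula, and the sharp decay \eqref{ineq:up} to exchange limit and integral.
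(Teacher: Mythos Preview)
Your proposal is correct and follows essentially the same route as the paper: a Pohozaev identity on a normal-coordinate ball at $x_0$, split into a boundary piece $B_\alpha$, a potential piece $C_\alpha$, and a geometric-error piece $D_\alpha=(\Delta_{\hat g}-\Delta_{Eucl})$, with the $n\geq 4$ case decided by matching the $\mu_\alpha^2$ (resp.\ $\mu_\alpha^2\ln\mu_\alpha^{-1}$ when $n=4$) leading terms of $C_\alpha+D_\alpha$ against the $O(\mu_\alpha^{n-2})$ boundary, and the $n=3$ case handled via the convergence $\mu_\alpha^{-1/2}u_\alpha\to d_n\,G_{x_0}$ in $C^2_{loc}(M\setminus\{x_0\})$ plugged into the boundary integral and then letting $\delta\to 0$. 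One minor correction to your sketch: the geometric-error term $D_\alpha$ produces integrals of the form $\int_{\rr^n}\tilde u^2\,dX$ and $\int_{\rr^n}|X|^2|\nabla\tilde u|^2\,dX$ (no $|x|^{-s}$ weight), and it is precisely the ratio of these two, computed in \eqref{rap0}, that yields the constant $c_{n,s}$.
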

The mass is defined as follows:

\begin{propdef}\label{def:mass:beta}[The mass] Let $(M,g)$ be a compact Riemannian manifold of dimension $n=3$, and let $h\in C^0(M)$ be such that $\Delta_g+h$ is coercive. Let $G_{x_0}$ be the Green's function of $\Delta_g+h$ at $x_0$. Let $\eta\in C^{\infty}(M)$ such that $\eta =1$ around $x_0$. Then there exists $\beta_{x_0}\in H_1^2(M)$ such that 
	\begin{equation}\label{green0}
	G_{x_0}=\frac{1}{4\pi}\eta d_g(\cdot,x_0)^{-1} +\beta_{x_0}\hbox{ in }M\setminus \{x_0\}.
	\end{equation}
Moreover, we have that $\beta_{x_0}\in H_2^p (M)\cap C^{0,\theta}(M)\cap C^{2,\gamma}(M\backslash\{x_0\})$ for all $p\in \left(\frac{3}{2}, 3\right)$ and $\theta,\gamma\in (0,1)$. We define the \underline{mass at $x_0$} as $m_{h}(x_0):=\beta_{x_0}(x_0)$, which is independent of the choice of $\eta$. 
\end{propdef}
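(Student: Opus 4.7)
The plan is to construct $G_{x_0}$ by coercivity, isolate its singular part via the parametrix $\eta\,d_g(\cdot,x_0)^{-1}/(4\pi)$, and apply elliptic $L^p$-theory to the remainder $\beta_{x_0}$. Since $(M,g)$ is a compact $3$-manifold and $\Delta_g+h$ is coercive, the Green's function $G_{x_0}$ exists as a standard object, is positive on $M\setminus\{x_0\}$, and satisfies $(\Delta_g+h)G_{x_0}=\delta_{x_0}$ in the distributional sense. The independence of the cutoff $\eta$ should then be transparent from the definition of $\beta_{x_0}$.

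The hard part will be the local behavior of $\Delta_g$ applied to $r^{-1}$, where $r:=d_g(\cdot,x_0)$. In $g$-normal coordinates at $x_0$, I would use the radial Laplacian formula $\Delta_g f(r)=-f''(r)-\bigl((n-1)/r+\partial_r\log\sqrt{|g|}\bigr)f'(r)$. Plugging in $f=r^{-1}$ and $n=3$, the Euclidean contributions cancel and one is left, for $r>0$, with
\begin{equation*}
\Delta_g(r^{-1})=r^{-2}\,\partial_r\log\sqrt{|g|}.
\end{equation*}
By Cartan's expansion $\sqrt{|g|}=1+O(r^2)$, so this is $O(r^{-1})$. Combined with the classical identity $\Delta_{\rr^3}(1/(4\pi|X|))=\delta_{0}$, I would then obtain on all of $M$ the distributional identity
\begin{equation*}
(\Delta_g+h)\!\left(\tfrac{1}{4\pi}\eta\,d_g(\cdot,x_0)^{-1}\right)=\delta_{x_0}+F,\qquad F\in L^p(M)\text{ for every }p\in[1,3),
\end{equation*}
using that $r^{-1}\in L^p_{loc}$ in dimension $3$ exactly when $p<3$ (and the same for $h\,\eta\,r^{-1}$).

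I would then define $\beta_{x_0}:=G_{x_0}-\frac{1}{4\pi}\eta\,d_g(\cdot,x_0)^{-1}$ on $M\setminus\{x_0\}$. Subtracting the two distributional identities gives $(\Delta_g+h)\beta_{x_0}=-F\in L^p(M)$ for every $p\in[1,3)$. By coercivity and $L^p$ elliptic theory this equation admits a unique solution in $H_2^p(M)$, which must agree with $\beta_{x_0}$ off $x_0$ and hence provides the claimed extension. Taking $p=2$ gives $\beta_{x_0}\in H_2^2(M)\subset H_1^2(M)$; for $p\in(3/2,3)$, Morrey's embedding in dimension $3$ yields $\beta_{x_0}\in C^{0,2-3/p}(M)$, allowing any $\theta\in(0,1)$. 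Away from $x_0$ the right-hand side is smooth and Schauder theory gives $\beta_{x_0}\in C^{2,\gamma}_{loc}(M\setminus\{x_0\})$. Finally, if $\eta_1,\eta_2$ are two admissible cutoffs, then $\beta^{(1)}_{x_0}-\beta^{(2)}_{x_0}=(\eta_2-\eta_1)\,d_g(\cdot,x_0)^{-1}/(4\pi)$ vanishes on a neighborhood of $x_0$ and so extends continuously by $0$ there, yielding $\beta^{(1)}_{x_0}(x_0)=\beta^{(2)}_{x_0}(x_0)$ and thus well-posedness of $m_h(x_0)$. The main obstacle is precisely the parametrix estimate $\Delta_g(r^{-1})=O(r^{-1})$: this is what makes the $3$-dimensional mass well-defined using only a smooth truncation of the Euclidean kernel, while in higher dimensions further parametrix corrections would be required.
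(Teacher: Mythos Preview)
Your argument is correct and is the standard parametrix proof. The paper itself does not give a proof of this Proposition--Definition: it is stated in the introduction as a known fact, and elsewhere the paper simply cites the Green's function estimates of Robert \cite{RG} and uses the decomposition \eqref{green0} (see Step \ref{step:n3}.2) without further justification. So there is no ``paper's own proof'' to compare against; your write-up supplies what the paper takes for granted.

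One small imprecision worth flagging: you write that away from $x_0$ ``the right-hand side is smooth'', but $F$ contains the term $h\cdot \tfrac{1}{4\pi}\eta\,r^{-1}$ and the hypothesis is only $h\in C^0(M)$. With $h$ merely continuous, $F$ is only $C^0$ on $M\setminus\{x_0\}$, and Schauder theory does not directly give $C^{2,\gamma}$; $L^p$ theory for all $p<\infty$ yields $C^{1,\gamma}$ for all $\gamma\in(0,1)$. In the paper's actual applications one has $h=a_\infty\in C^1(M)$, so $F\in C^{0,\gamma}_{loc}(M\setminus\{x_0\})$ and the $C^{2,\gamma}$ conclusion follows; but as stated for $h\in C^0$ this step needs either a slightly stronger hypothesis on $h$ or a more careful argument. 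Everything else --- the computation $\Delta_g(r^{-1})=r^{-2}\partial_r\log\sqrt{|g|}=O(r^{-1})$ in normal coordinates, the resulting $F\in L^p(M)$ for $p<3$, the $H_2^p$ regularity and Morrey embedding giving all $\theta\in(0,1)$, and the independence of $\eta$ --- is fine.
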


\medskip\noindent Theorem \ref{th1} yields a necessary condition for the existence of solutions to \eqref{m2} that blow-up with minimal energy. Conversely, in a work in progress \cite{HCA4}, we show that this is a necessary condition by constructing an example via the finite-dimensional reduction in the spirit of Micheletti-Pistoia-V\'etois \cite{mpv}.

\medskip\noindent The role of the scalar curvature in blow-up analysis has been outlined since the reference paper \cite{druetjdg} of Druet for $s=0$. In the singular Hardy-Sobolev case ($s\in (0,2)$), the critical threshold $c_{n,s} Scal_g(x_0)$ was first observed by Jaber \cite{J1} who proved that there is a solution $u\in H_1^2(M)\cap C^0(M)$ to 
$$\Delta_gu+h u=\frac{u^{\crit-1}}{d_g(x,x_0)^s}\, ;\, u>0\hbox{ in }M.$$
when $n\geq 4$ as soon as $h(x_0)<c_{n,s} Scal_g(x_0)$ where $h\in C^0(M)$ and $\Delta_g+h$ is coercive. More recently, it was proved by Chen \cite{Chen} that for any potential $h\in C^1(M)$ such that $\Delta_g+h$ is coercive, then there is a blowing-up family of solutions $(u_\epsilon)_{\epsilon>0}$ to 
$$\Delta_gu_\epsilon+h u_\epsilon=\frac{u_\epsilon^{\crit-1-\epsilon}}{d_g(x,x_0)^s}\, ;\, u_\epsilon>0\hbox{ in }M.$$
when $h(x_0)>c_{n,s} Scal_g(x_0)$ and $n\geq 4$.

\smallskip  This paper is organized as follows. In Section \ref{sec:preliminary} we introduce some preliminary results that will be of use in the sequel. In Section \ref{sec:estimate}, We establish sharp pointwise estimates for arbitrary sequences of solutions of \ref{m2}, in particular we prove the Theorem \ref{prop1}. Section \ref{sec:c0theory} describes the $C^0$-theory for blowing-up sequences of solutions of \eqref{m2} developed in \cite{DHR}. The proof of the main Theorems \ref{theo:B_s(g)}  and \ref{th1} will be given in Sections \ref{sec:pohozaev} and \ref{sec:theo:B_s(g)}.
	\section{Preliminary blow-up analysis}\label{sec:preliminary}
We let $(a_\alpha)_{\alpha}, a_\infty\in C^1(M)$, $(\ll_\alpha)_\alpha\in\rr$ and $\ua \in H_1^2(M)$ be such that \eqref{m0}-\eqref{m00} hold. In the sequel, for any $\rho >0$ and $z\in M$ (resp. $z\in\rn$), $B_\rho(z)\subset M$ (resp. $\subset\rn$) denotes the geodesic ball of center $z$ and of radius $\rho$ in $M$ for the Riemannian distance $d_g$ (resp. in $\rn$ for the Euclidean distance).
\begin{lem}\label{co2}
We claim that
	\begin{equation*}
	\lim_{\alpha\to +\infty} \ua=0 \bb{ in } C_{loc}^0\left( M\backslash \{x_0\}\right) .
	\end{equation*}
	\end{lem}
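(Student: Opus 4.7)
My plan is to exploit the strict Sobolev-subcriticality of the nonlinearity away from $x_0$: since $s>0$, the Hardy-Sobolev exponent $\crit = 2(n-s)/(n-2)$ is strictly smaller than the Sobolev critical exponent $2^\star := 2n/(n-2)$, and this gap will drive the whole argument.

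First, I would establish a uniform $H_1^2$-bound. Testing \eqref{m2} against $u_\alpha$ and invoking \eqref{m7} yields $\int_M(|\nabla u_\alpha|_g^2 + a_\alpha u_\alpha^2)\,dv_g = \lambda_\alpha$, and then \eqref{m0bis} together with \eqref{m1} give $\|u_\alpha\|_{H_1^2(M)} \leq C$. Combined with $u_\alpha \rightharpoonup 0$ and Rellich-Kondrachov, this produces $u_\alpha \to 0$ strongly in $L^q(M)$ for every $q \in [1, 2^\star)$.

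Next, I fix a compact $K \subset M \setminus \{x_0\}$ and choose $\delta > 0$ with $K \subset \Omega_\delta := M \setminus \overline{B_{2\delta}(x_0)}$. On $\Omega_\delta$ the weight $d_g(\cdot,x_0)^{-s}$ is bounded by $(2\delta)^{-s}$, so \eqref{m2} reduces to $\Delta_g u_\alpha + a_\alpha u_\alpha = F_\alpha$ with $|F_\alpha| \leq C u_\alpha^{\crit - 1}$ and subcritical exponent $\crit - 1 < (n+2)/(n-2)$. I would then run an $L^p$-bootstrap: if $u_\alpha \to 0$ in $L^q(\Omega_\delta)$, then $F_\alpha \to 0$ in $L^p(\Omega_\delta)$ with $p := q/(\crit - 1)$; standard interior $L^p$-elliptic regularity gives $u_\alpha \to 0$ in $W^{2,p}_{\mathrm{loc}}(\Omega_\delta)$; and either Morrey's embedding $W^{2,p} \hookrightarrow C^0$ (when $p > n/2$) closes the argument on a slightly smaller compact set, or the Sobolev embedding $W^{2,p} \hookrightarrow L^{np/(n-2p)}$ (when $p<n/2$) improves the $L^q$-exponent so that the scheme can be iterated.

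The termination of the bootstrap is the place where the hypothesis $s>0$ is crucial, and where I expect the main obstacle in the presentation. The iteration map $T(q) := nq/(n(\crit-1)-2q)$ satisfies $T(q) > q$ on the interval $\bigl(n(2-s)/(n-2),\, n(\crit-1)/2\bigr)$, whose lower endpoint is strictly smaller than $2^\star$ precisely because $s > 0$. Starting with $q$ close to $2^\star$, each iteration strictly increases $q$, and $T(q)\to\infty$ as $q$ approaches the upper endpoint, so after finitely many steps I reach $p > n/2$ and obtain $\|u_\alpha\|_{L^\infty(K)} \to 0$ after a last shrinkage of the domain. Since $K\subset M\setminus\{x_0\}$ was arbitrary, the lemma follows. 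Note that when $s=0$ this bookkeeping collapses, which is why the classical Sobolev case treated in \cite{D} requires a more delicate Moser-type iteration.
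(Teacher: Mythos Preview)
Your argument is correct and complete. The paper, however, takes a shorter route: instead of bootstrapping on the right-hand side, it treats the equation as a linear one, writing $\Delta_g u_\alpha = H_\alpha u_\alpha$ with potential $H_\alpha := -a_\alpha + \lambda_\alpha\, d_g(\cdot,x_0)^{-s} u_\alpha^{\crit-2}$. Away from $x_0$ the weight is bounded, and the uniform $H_1^2$-bound places $u_\alpha^{\crit-2}$ uniformly in $L^{2^\star/(\crit-2)} = L^{n/(2-s)}$; since $s>0$ one has $n/(2-s)>n/2$, so $H_\alpha$ is uniformly in $L^r$ for some $r>n/2$ on any ball avoiding $x_0$. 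A single application of the De Giorgi--Nash--Moser local boundedness estimate (the $\sup$--$L^2$ inequality for weak subsolutions with potential in $L^r$, $r>n/2$) then gives $\sup_{B_{r_y}(y)} u_\alpha \leq C\Vert u_\alpha\Vert_{L^2(B_{2r_y}(y))}\to 0$, and a covering argument finishes. Your Calder\'on--Zygmund iteration reaches the same endpoint after finitely many steps; the paper's method trades those iterations and the nested-domain bookkeeping for the Moser machinery. Both proofs rest on exactly the same mechanism---the strict gap $\crit<2^\star$ when $s>0$---but encode it differently: you see it as the condition that the bootstrap fixed point $n(2-s)/(n-2)$ lies strictly below the starting exponent $2^\star$, while the paper sees it as the condition that the linearised potential clears the $L^{n/2}$ threshold.
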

\noindent{\it Proof of Lemma \ref{co2}:} We take $y\in M\backslash \{x_0\}$, $r_y=\frac{1}{3}d_g(y,x_0)$. Since $u_{\alpha}$ verifies the equation \eqref{m1}, we have $$\Delta_g u_{\alpha} =  H_{\alpha} u_{\alpha} \bb{ in } B_{2r_y}(y),$$ where the function 
$$ H_{\alpha}(x):= a_{\alpha}+\lambda_{\alpha}\frac{ u_{\alpha}^{\crit-2}}{d_g(x,x_0)^s}.$$
Since $a_{\alpha}\to a_{\infty}$ in $C^1$, for any $r\in (\frac{n}{2},\frac{n}{2-s})$, then there exists $c_0>0$ independant of $\alpha$ such that 
\begin{eqnarray*}
	\int_{B_{2r_y}(y)} H_{\alpha}^r \, dv_g &\leq &c_0.
\end{eqnarray*}
Using Theorem 8.11 in Gilbarg-Trudinger \cite{GT}, that there exists $C_{n,s,y,c_0}>0$ independant of $\alpha$ such that 
$$ \max_{B_{r_y}(y)} u_{\alpha}\leq C_{n,s,y,c_0} \, \vv u_{\alpha} \vv_{L^2(B_{2r_y}(y))}.$$
Therefore, it follows from the convergence in $\eqref{m3}$ that 
$$\vv u_{\alpha}\vv_{L^{\infty}(B_{r_y}(y))}\to 0 \bb{ as } \alpha \to +\infty.$$
A covering argument yields Lemma \ref{co2}. \qed
\begin{lem}\label{co3}
We claim that
\begin{equation}\label{m4}
\sup_{x\in M}u_{\alpha}(x)= +\infty \bb{ as } \alpha\to + \infty.
\end{equation}
\end{lem}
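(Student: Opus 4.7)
\noindent\textbf{Proof proposal for Lemma \ref{co3}.} The plan is to argue by contradiction. Suppose the conclusion fails. Then, up to a subsequence, there exists $C>0$ such that $\|u_\alpha\|_\infty\leq C$ for every $\alpha$. The strategy is to use this bound together with Lemma \ref{co2} to show $\|u_\alpha\|_{\crit,s}\to 0$, contradicting the normalization \eqref{m7}.

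First I would fix $\delta>0$ small and split the integral defining $\|u_\alpha\|_{\crit,s}^{\crit}$ into the contribution of $B_\delta(x_0)$ and of its complement. On $M\setminus B_\delta(x_0)$ the weight $d_g(\cdot,x_0)^{-s}$ is bounded, and Lemma \ref{co2} gives uniform convergence $u_\alpha\to 0$ on this set; hence
$$\int_{M\setminus B_\delta(x_0)} u_\alpha^{\crit}\, d_g(x,x_0)^{-s}\, dv_g\longrightarrow 0$$
as $\alpha\to+\infty$, for each fixed $\delta$. On $B_\delta(x_0)$ I use the standing bound $u_\alpha\leq C$ to obtain
$$\int_{B_\delta(x_0)} u_\alpha^{\crit}\, d_g(x,x_0)^{-s}\, dv_g \;\leq\; C^{\crit}\int_{B_\delta(x_0)}d_g(x,x_0)^{-s}\,dv_g,$$
and since $s<2<n$ a standard computation in normal coordinates shows that this last integral is $O(\delta^{n-s})$, hence tends to $0$ as $\delta\to 0$.

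Given $\varepsilon>0$, I would first choose $\delta$ small enough so that the $B_\delta(x_0)$-contribution is bounded by $\varepsilon/2$ uniformly in $\alpha$, and then take $\alpha$ large enough so that the contribution from $M\setminus B_\delta(x_0)$ is less than $\varepsilon/2$. This yields $\|u_\alpha\|_{\crit,s}^{\crit}<\varepsilon$ for $\alpha$ large, contradicting \eqref{m7}. Therefore no uniform bound on $u_\alpha$ can hold, proving \eqref{m4}.

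There is no real obstacle here: the argument is essentially a dominated/absolute-continuity statement combined with the integrability of $d_g(\cdot,x_0)^{-s}$ for $s<n$. The only point to be slightly careful about is the use of normal coordinates near $x_0$ to evaluate $\int_{B_\delta(x_0)}d_g(x,x_0)^{-s}dv_g$, but this is standard on a compact Riemannian manifold.
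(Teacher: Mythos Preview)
Your proof is correct and follows essentially the same contradiction strategy as the paper: assume a uniform $L^\infty$ bound and show $\|u_\alpha\|_{\crit,s}\to 0$, contradicting \eqref{m7}. The only cosmetic difference is that the paper invokes the weak convergence \eqref{m3} (hence a.e.\ convergence up to a subsequence) together with Lebesgue's dominated convergence theorem on all of $M$ with dominating function $C^{\crit}d_g(\cdot,x_0)^{-s}\in L^1(M)$, whereas you split the domain and use Lemma~\ref{co2} on $M\setminus B_\delta(x_0)$ and absolute continuity of the weight on $B_\delta(x_0)$; both arguments are equivalent here.
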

\noindent{\it Proof of Lemma \ref{co3}:} If \eqref{m4} does not hold, then there exists $C>0$ such that $$u_{\alpha}\leq C \bb{ for all } x\in M.$$
The convergence \eqref{m3} and Lebesgue's Convergence Theorem yield $\lim_{\alpha\to +\infty}\vv u_{\alpha} \vv_{\crit,s}=0$, contradiction \eqref{m7}. This proves Lemma \ref{co3}. \qed

\smallskip \noindent From the introduction (see \eqref{m00}), we recall the definition of $x_\alpha\in M$ and $\na>0$:
\begin{equation*}\label{m000}
\na:=\left( \max_{M}u_{\alpha}\right) ^{-\frac{2}{n-2}}=\left( 
u_{\alpha}(x_{\alpha})\right) ^{-\frac{2}{n-2}}.
\end{equation*}

 \medskip\noindent It  follows from Lemmae \ref{co2} and \ref{co3}  that 
 \begin{eqnarray}\label{m32}
 x_{\alpha} \to x_0 \bb{ as } \alpha \to +\infty.
 \end{eqnarray}
	
\medskip\noindent We divide the proof of  Theorem \ref{prop1} in several steps:

	\begin{step}\label{step0}
	We claim that 
	\begin{eqnarray*}
		d_g(x_{\alpha},x_0)=o(\ma) \bb{ as } \alpha \to +\infty.
	\end{eqnarray*}
\end{step}

\noindent{\it Proof of Step \ref{step0}:} With the convergence in \eqref{m32} and taking $z_{\alpha}=x_{\alpha}$ in Theorem \ref{theo1}, we get that $d_g(x_\alpha,x_0)=O(\ma)$ as $\alpha\to +\infty$. We define the rescaled metric $\bar{g}_{\alpha}(x):=\left( \exp^\star_{x_\alpha} g\right) (\mu_{\alpha}X)$ in $B_{\delta_0^{-1}\ma}(0)$ and
\begin{equation*}
\bar{u}_{\alpha}(X):=\mu_{\alpha}^{\frac{n-2}{2}} u_{\alpha}(\exp_{x_\alpha}(\mu_{\alpha}X))\bb{ for all } X\in B_{\delta_0\mu_{\alpha}^{-1}}(0)\subset \rr^n.
\end{equation*}
Here, $\exp_{x_\alpha}:B_{\delta_0}(0)\to B_{\delta_0}(x_{0})\subset M$ is the exponential map at $x_\alpha$. It follows from Theorem \ref{theo1} that
\begin{eqnarray*}
	 \bar{u}_{\alpha} \to \tilde{u} \bb{ in }  C^0_{loc}(\rr^n) \bb{ as } \alpha\to +\infty,
\end{eqnarray*}
where $\tilde{u}$ is as in Theorem \ref{theo1}. Since $\bar{u}_{\alpha}(0)=1=\max \bar{u}_\alpha$, we get 
$$ \tilde{u}(0)=\lim_{\alpha\to +\infty} \bar{u}_{\alpha}(0)=1.$$
On the other hand, we have $\vv \bar{u}_{\alpha} \vv_{\infty}=1$ thus $0$ is a maximum of $\tilde{u}$.  Let us  define $X_{0,\alpha}:= \ma^{-1} \exp_{x_\alpha}^{-1}(x_0)$ such that $X_0:= \lim_{\alpha\to +\infty} X_{0,\alpha}$. Using the explicit form of $\tilde{u}$ in Theorem \ref{theo1} that $ \tilde{u}(X)\leq \tilde{u}(X_0)$ for all $X\in \rr^n$. This yields $X_0=0$. We have that 
$$ d_g(x_{\alpha},x_0)=\ma d_{\tilde{g}_{\alpha}}(X_{0,\alpha},0)=\ma|X_{0,\alpha}|=o\left(\ma \right).$$
This yields Step \ref{step0}.
\qed

\medskip\noindent We fix $\delta_0 \in (0, i_g(M))$ where $i_g(M)>0$ is the injectivity radius of $(M,g)$. We define the metric
\begin{equation}\label{def:tga}
\tilde{g}_{\alpha}(x):=\left( \exp^\star_{x_{0}} g\right) (\mu_{\alpha}X)\hbox{ in }B_{\delta_0^{-1}\ma}(0),
\end{equation}
and the rescaled function
\begin{equation}\label{def:uta}
\tilde{u}_{\alpha}(X):=\mu_{\alpha}^{\frac{n-2}{2}} u_{\alpha}(\exp_{x_{0}}(\mu_{\alpha}X))\bb{ for all } X\in B_{\delta_0\mu_{\alpha}^{-1}}(0)\subset \rr^n,
\end{equation}
where $\exp_{x_0}$  is the exponential map at $x_0$.
Equation \eqref{m2} rewrites
\begin{equation}\label{eq:utza}
\Delta_{\tilde{g}_{\alpha}}\utza+\tilde{a}_{\alpha} \utza=\lambda_{\alpha}\frac{ \utza^{\crit-1}}{|X|^s} \bb{ in }B_{\delta_0\ma^{-1}}(0)\setminus\{0\},
\end{equation} 
where $\tilde{a}_{\alpha}(X):=\ma^2 a_{\alpha}(\exp_{x_0}(\ma X))\to 0 $ in $C^1_{loc}(\rn)$ as $\alpha\to +\infty$. 
\begin{step}\label{step00}
	We claim that,
	\begin{equation}\label{convergence}
	\lim_{\alpha\to +\infty} \tilde{u}_{\alpha}= \tilde{u}, 
	\end{equation}
	in  $C^2_{loc}(\rr^n\backslash\{0\})$  and uniformly in $C^{0,\beta}_{loc}(\rr^n)$, for all $\beta \in \left( 0,\min\{1,2-s\}\right)$.
	Where  
	\begin{equation}\label{eq:utilde}
	\tilde{u}(X)= \left( \frac{K^{2-s}}{K^{2-s}+|X|^{2-s}}\right) ^{\frac{n-2}{2-s}} \bb{ for all } X\in \rr^n\setminus\{0\},
	\end{equation} 
	with 
\begin{equation}\label{defK}
K^{2-s}=(n-2)(n-s)\mu_s(\rr^n)^{-1}.
\end{equation}
In particular, $\tilde{u}$ verifies 
	\begin{eqnarray}\label{eq:tu}
	\Delta_{Eucl} \tilde{u}= \mu_s(\rn)\frac{\tilde{u}^{\crit-1}}{|X|^s} \bb{ in } \rr^n\setminus\{0\} \bb{ and } \int_{\rr^n}\frac{\tilde{u}^{\crit}}{|X|^s} \, dX=1,
	\end{eqnarray}
	where $Eucl$ is the Euclidean metric of $\rr^n$. Moreover, 
	\begin{equation}\label{m8}
	\lim_{R\to +\infty}\lim_{\alpha\to +\infty} \int_{M\backslash B_{R\ma}(x_0)} \frac{\ua^{\crit}}{d_{g}(x,x_0)^s} \, dv_g=0. 
	\end{equation}
\end{step}
\noindent{\it Proof of Step \ref{step00}:} Using Step \ref{step0} and applying again Theorem \ref{theo1} with $z_{\alpha}=x_0$, we get the convergence of $\tilde{u}_{\alpha}$ (see \eqref{def:uta}). Now, we want to proof \eqref{m8}. We obtain by change of variable $X=\ma^{-1}\exp_{x_{0}}^{-1}(x)$ and the definition of $\uta$ in \eqref{def:uta} that,
\begin{equation*}\
\int_{B_{R\ma}(x_0)} \frac{\ua^{\crit}}{d_{g}(x,x_0)^s} \, dv_g=\int_{B_{R}(0)} \frac{\utza^{\crit}}{|X|^s} \, dv_{\tilde{g}_{\alpha}},
\end{equation*} 
where $\tilde{g}_\alpha$ is defined in \eqref{def:tga}. Therefore, applying Lebesgue's convergence Theorem and using the uniform convergence in $C^{0,\beta}_{loc}(\rr^n)$, for all $\beta \in \left( 0,\min{1,2-s}\right)$ of \eqref{convergence}, 
\begin{eqnarray}
\lim_{R\to +\infty}\lim_{\alpha\to +\infty}\int_{B_{R\ma}(x_0)} \frac{\ua^{\crit}}{d_{g}(x,x_0)^s} \, dv_g
&=&\lim_{R\to +\infty}\int_{B_{R}(0)} \frac{\tilde{u}^{\crit}}{|X|^s} \, dX\nonumber\\
&=&	\int_{\rr^n} \frac{\tilde{u}^{\crit}}{|X|^s} \, dX=1, \, \bb{ thanks to } \eqref{eq:norme1}. \label{m44}
\end{eqnarray}
From $\vv u_{\alpha}\vv_{\crit,s}^{\crit}=1$ and \eqref{m44}, we conclude that \eqref{m8}. This ends Step \ref{step00}.\qed

\begin{step}\label{cv:d12}
We claim that for any $R>0$, 
\begin{equation}\label{conv:D12F}
\utza\to \tilde{u}\hbox{ in }H_{1}^2(B_R(0))\hbox{ as }\alpha\to +\infty.
\end{equation}
\end{step}
\noindent{\it Proof of Step \ref{cv:d12}:} We rewrite \eqref{eq:utza} as 
 $$\Delta_{\tilde{g}_{\alpha}}\tilde{u}_{\alpha}=f_{\alpha}:=\lambda_{\alpha}\frac{ \tilde{u}_{\alpha}^{\crit-1}}{|X|^s}-\tilde{a}_{\alpha} \tilde{u}_{\alpha}.$$
Thanks to \eqref{convergence}, we get $f_{\alpha}(X) \to f(X)=\mu_{s,0}(\rr^n)\frac{\tilde{u}^{\crit-1}(X)}{|X|^s}$ in $C^{0,\beta}_{loc}(\rr^n \backslash \{0\})$, for all $\beta \in \left( 0,\min\{1,2-s\}\right)$. For any $R>0$, we have 
\begin{equation*}
	\vv f_\alpha\vv_{L^p(B_{2R}(0))} \leq \vv |X|^{-s}\vv_{L^{p}(B_{2R}(0))}\vv \utza\vv_{L^{\infty}(B_{2R}(0))}.
	\end{equation*}
It follows from \eqref{convergence} that $(\utza)_\alpha$ is bounded in $L^{\infty}_{loc}$. Since   $X\to |X|^{-s}\in L^{p}_{loc}(\rr^n)$ for $1<p<\frac{n}{s}$, then for such $p$, we have that $(f_\alpha)_\alpha$ is bounded in $L^{p}(B_{2R}(0))$. Using standard elliptic theory (see for instance \cite{GT}), we infer that
$$\vv \utza\vv_{H_2^p(B_0(R))}\leq C\left(\vv f_{\alpha}\vv_{L^{p}(B_{2R}(0))}+\vv \utza \vv_{L^p(B_{2R}(0))}\right).$$
Define now $p^\star$ such that $\frac{1}{p^\star}=\frac{1}{p}-\frac{1}{n}$. If $p^\star\leq 0$, $H_1^p(B_R(0))$ is compactly embedded in $L^2(B_R(0))$. Now, if $p^\star>0$, we have $H_1^p(B_R(0))$ is compactly embedded in $L^{q}(B_R(0))$ for $1\leq q<p^\star$ and $L^{2^\star}(B_R(0))\hookrightarrow L^2(B_R(0))$ iff $2\leq p^\star\iff p\geq \frac{2n}{n+2}$.
But, $s\in(0,2)$ then there exists $p>1$ such that $p\in (\frac{2n}{n+2}, \frac{n}{s})$ and then $(\utza)$ is bounded in $H_2^p(B_0(R))\hookrightarrow H_1^2(B_0(R))$. Since the embedding is compact, up to extraction, we get \eqref{conv:D12F} and ends Step \ref{cv:d12}.\qed

\begin{step}\label{step3} We claim that there exists $C>0$ such that 
\begin{equation*}
d_g(x,x_0)^{\frac{n-2}{2}} \ua(x)\leq C \bb{ for all } x\in M \bb{ and } \alpha>0.
\end{equation*}
\end{step}  
\noindent{\it Proof of Step \ref{step3}:} We follow the arguments of Jaber \cite{J2} (see also Druet \cite{D} and  Hebey \cite{H}). We argue by contradiction and assume that there exists $(y_{\alpha})_{\alpha}\in M$ such that 
 \begin{equation}\label{m10}
\sup_{x\in M} d_g(x,x_0)^{\frac{n-2}{2}}\ua(x)= d_g(y_{\alpha},x_0)^{\frac{n-2}{2}}\ua(y_{\alpha}) \to +\infty \bb{ as } \alpha \to +\infty.
 \end{equation}
Since $M$ is compact, we then get that $\lim_{\alpha\to +\infty} u_{\alpha}(y_{\alpha})=+\infty$. Thanks again to Lemma \ref{co2}, we obtain that, up to a subsequence,
 \begin{equation}\label{converge1}
 	\lim_{\alpha\to +\infty} y_{\alpha}=x_0.
 \end{equation}
 \noindent For $\alpha>0$, we define $\nu_{\alpha}:=\ua(y_{\alpha})^{-\frac{2}{n-2}}$, and then 
 \begin{equation}\label{converge2}
 	\nu_{\alpha}\to 0 \bb{ as } \alpha\to +\infty.
 \end{equation}
 We adopt the following notation: $(\theta_R)$ will denote any quantity such that 
 $$\lim_{R\to +\infty}\theta_{R}=0.$$
 
\noindent We claim that 
 \begin{eqnarray}\label{m9}
\int_{B_{\nu_{\alpha}}(y_{\alpha})}\frac{u_{\alpha}^{\crit }}{d_g(x,x_0)^s} \, dv_g=o(1)\hbox{ as }\alpha\to \infty.
 \end{eqnarray}
 \noindent{\it Proof of \eqref{m9}:} We fix $\delta>0$ and for any $R>0$,  
  \begin{eqnarray*}
  \int_{ B_{\delta}(x_0)\backslash B_{R\ma}(x_0)}\frac{u_{\alpha}^{\crit }}{d_g(x,x_0)^s} \, dv_g\leq \int_{ M\backslash B_{R\ma}(x_0)}\frac{u_{\alpha}^{\crit }}{d_g(x,x_0)^s} \, dv_g.
  \end{eqnarray*}
Therefore, it follows from the equation \eqref{m8} in the Step \ref{step00} that,
\begin{equation}\label{m33}
\int_{ B_{\delta}(x_0)\backslash B_{R\ma}(x_0)}\frac{u_{\alpha}^{\crit }}{d_g(x,x_0)^s} \, dv_g=\td+o(1).
\end{equation}
On the other hand, equations \eqref{converge1} and \eqref{converge2} yield
$B_{\nu_{\alpha}}(y_{\alpha})\backslash B_{\delta}(x_0)=\emptyset,$
and
\begin{eqnarray}
	\int_{B_{\nu_{\alpha}}(y_{\alpha})  }\frac{u_{\alpha}^{\crit }}{d_g(x,x_0)^s} \, dv_g&=& \int_{B_{\nu_{\alpha}}(y_{\alpha})\cap B_{\delta}(x_0) }\frac{u_{\alpha}^{\crit }}{d_g(x,x_0)^s} \, dv_g\nonumber\\
	&=&\int_{B_{\nu_{\alpha}}(y_{\alpha})\cap\left( B_{\delta}(x_0)\backslash B_{R\ma}(x_0)\right)  }\frac{u_{\alpha}^{\crit }}{d_g(x,x_0)^s} \, dv_g\nonumber\\
	&&+ \int_{B_{\nu_{\alpha}}(y_{\alpha})\cap B_{R\ma}(x_0) }\frac{u_{\alpha}^{\crit }}{d_g(x,x_0)^s} \, dv_g\nonumber\\
	&\leq& \td+ \int_{B_{\nu_{\alpha}}(y_{\alpha})\cap B_{R\ma}(x_0) }\frac{u_{\alpha}^{\crit }}{d_g(x,x_0)^s} \, dv_g+o(1) \bb{ (with \eqref{m33})}\label{m16}
\end{eqnarray}
We now distinguish two cases:

\medskip\noindent {\bf Case 1:} If $B_{\nu_{\alpha}}(y_{\alpha})\cap B_{R\ma}(x_0)=\varnothing$, then  \eqref{m9} is  a consequence of \eqref{m8}.

\medskip\noindent {\bf Case 2:} If $B_{\nu_{\alpha}}(y_{\alpha})\cap B_{R\ma}(x_0)\neq\varnothing$. Then,
\begin{equation}\label{m11}
d_g(y_{\alpha},x_0)\leq \nua+R\ma.
\end{equation}
 It follows from  the definition of $\nua$  and \eqref{m10} that,
 \begin{equation}\label{m12}
 \lim_{\alpha\to +\infty}\frac{\nua}{d_g( y_{\alpha},x_0)}=0.
 \end{equation}
Combining the equations \eqref{m11} and \eqref{m12},
\begin{equation}\label{m13}
d_g( y_{\alpha},x_0)=O(\ma) \bb{ and } \nua=o(\ma) \bb{ as } \alpha \to +\infty.
\end{equation}
We now consider an exponential chart $\left( \Omega_0, \exp_{x_0}^{-1}\right) $ centered at $x_0$ such that $\exp_{x_0}^{-1}(\Omega_0)=B_{r_0}(0)$, $r_0\in (0, i_g(M))$. We take $\tilde{Y}_{\alpha}=\ma^{-1}\exp_{x_0}^{-1}(y_{\alpha})$. By compactness arguments, there exists $c>1$ such that for all $X, Y\in \rr^n$, $\ma|X|, \ma|Y|<r_0$, 
$$ \frac{1}{c}|X-Y|\leq d_{\gta}(X,Y)\leq c |X-Y|.$$
Therefore, we have:
\begin{equation*}
\ma^{-1}\exp_{x_0}^{-1}\left(B_{\nua}(y_{\alpha}) \right)\subset B_{c\frac{\nua}{\ma}}(\tilde{Y}_{\alpha}).
\end{equation*}
And by equation \eqref{m13},
\begin{equation*}
	\tilde{Y}_{\alpha}= O\left(  d_{\gta}(\tilde{Y}_{\alpha},0)\right) =O\left( \ma^{-1}d_g(y_{\alpha},x_0) \right)=O(1).
\end{equation*}
It follows from \eqref{m12}, \eqref{m13} and the change of variables $X=\ma^{-1}\exp_{x_0}^{-1}(x)$ that,
\begin{eqnarray*}
\int_{B_{\nu_{\alpha}}(y_{\alpha})\cap B_{R\ma}(x_0) }\frac{u_{\alpha}^{\crit }}{d_g(x,x_0)^s} \, dv_g &\leq& \int_{\ma^{-1}\exp_{x_0}^{-1}\left( B_{\nu_{\alpha}}(y_{\alpha})\right) }\frac{\uta^{\crit }}{d_{\gta}(X,0)^s} \, dv_{\gta}\\
&\leq& \int_{B_{c\frac{\nua}{\ma}}(\tilde{Y}_{\alpha}) }\frac{\uta^{\crit }}{d_{\gta}(X,0)^s} \, dv_{\gta}\\
\end{eqnarray*}
It follows from the equation $\nua=o(\ma)$ and Lebesgue's convergence Theorem,
\begin{eqnarray*}
	\lim_{\alpha\to +\infty}\int_{B_{\nu_{\alpha}}(y_{\alpha})\cap B_{R\ma}(x_0) }\frac{u_{\alpha}^{\crit }}{d_g(x,x_0)^s} \, dv_g =0.
\end{eqnarray*}
Therefore, combining this with \eqref{m16}, we conclude \eqref{m9}. This proves the claim.\qed

\medskip\noindent We take now a family $\left(\Omega_{\alpha}, \exp_{y_{\alpha}}^{-1} \right)_{\alpha>0}$ of exponential charts centered at $y_{\alpha}$. Set $r_0\in \left( 0, i_g(M)\right)$,  we define
$$\hat{u}_{\alpha}(X)=\nua^{\frac{n-2}{2}}u_{\alpha}(\exp_{y_{\alpha}}(\nua X)) \bb{ on } B_{r_0\nua^{-1}}(0)\subset \rr^n,$$
and the metric, 
$$\hat{g}_{\alpha}(X)=\exp_{y_{\alpha}}^{\star}g(\nua X) \bb{ on } \rr^n.$$
Since $u_{\alpha}$ verifies the equation \eqref{m2}, we get $\hat{u}_{\alpha}$ verifies also weakly  
$$\Delta_{\hat{g}_{\alpha}}\hat{u}_{\alpha}+\hat{a}_{\alpha} \hat{u}_{\alpha}=\lambda_{\alpha}\frac{ \hat{u}_{\alpha}^{\crit-1}}{d_{\hat{g}_{\alpha}}(X,X_{0,\alpha})^s} \bb{ in }\rr^n,$$
where $\hat{a}_{\alpha}(X):=\nua^2 a_{\alpha}(\exp_{y_{\alpha}}(\nua X))\to 0 $ as $\alpha\to +\infty$ and $ X_{0,\alpha}=\na ^{-1} \exp_{y_{\alpha}}^{-1} (x_0)$.	

\medskip\noindent We claim that 
\begin{equation}\label{lim:nonzero}
 \hat{u}_{\alpha}\to \hat{u}\not\equiv 0 \bb{ in } C^0_{loc}(\rr^n) \bb{ as } \alpha \to +\infty.
\end{equation} 
We prove \eqref{lim:nonzero}. Using the definition of $\hat{u}_{\alpha}$ and the equation \eqref{m10}, we get 
	\begin{eqnarray}\label{m17}
		 \hat{u}_{\alpha}(X)\leq \left( \frac{d_g(x_0,y_{\alpha})}{d_g(\exp_{y_{\alpha}}(\nua X),x_0)}\right)^{\frac{n-2}{2}} \bb{ for all } X\in B_{r_0\nua^{-1}}(0).
		\end{eqnarray}
	On the other hand, from the triangular inequality and for any $X\in B_{R}(0)$, we obtain that
\begin{align*}
	d_g(\exp_{y_{\alpha}}(\nua X),x_0)&\geq d_g(x_0,y_{\alpha}) -d_g(\exp_{y_{\alpha}}(\nua X),y_{\alpha})\\
&=d_g(x_0,y_{\alpha}) - \nua |X|\\
&\geq d_g(x_0,y_{\alpha}) -\nua R.
\end{align*}
Therefore, with the equation \eqref{m17}, we have for all $X\in B_{R}(0)$ that,
\begin{eqnarray*}
	\hat{u}_{\alpha}(X)&\leq& \left( \frac{1}{1-\frac{\nua R}{d_g(x_0,y_{\alpha})}}\right)^{\frac{n-2}{2}}.
\end{eqnarray*}
Moreover, with \eqref{m11}, we obtain for all $X\in B_{R}(0)$, that $\hat{u}_{\alpha}(X)\leq 1+o(1)$ in $C^{0}(B_{R}(0))$. 
\noindent Using again the definition $\nua$, we have $\hat{u}_{\alpha}(0)=1$ for all $\alpha>0$. Elliptic Theory yields $\hat{u}_{\alpha}\to \hat{u} \bb{ in } C_{loc}^0(\rr^n)$ and we have also that $\hat{u}(0)=\lim_{\alpha\to +\infty}\hat{u}_{\alpha}(0)=1$. This yields \eqref{lim:nonzero} and the claim is proved.

\medskip\noindent Take $X=\nua^{-1}\exp^{-1}_{y_{\alpha}}(x)$ and from the definition of $\hat{u}_{\alpha}$, we infer that 
\begin{equation*}
\int_{B_{1}(0)}\frac{\hat{u}_{\alpha}^{\crit }}{d_{\hat{g}_{\alpha}}(X,X_{0,\alpha})^s} \, dv_{\hat{g}_{\alpha}}=\int_{B_{\nua}(y_{\alpha})}\frac{u_{\alpha}^{\crit }}{d_g(x,x_0)^s} \, dv_g.
\end{equation*}
Therefore, using Lebesgue's convergence Theorem and \eqref{m9}, we obtain that 
\begin{align*}
\int_{B_{1}(0)}\frac{\hat{u}^{\crit }}{|X|^s} \, dX
&=\lim_{\alpha\to +\infty}\int_{B_{\nua}(y_{\alpha})}\frac{u_{\alpha}^{\crit }}{d_g(x,x_0)^s} \, dv_g\\
&=0.
\end{align*}
with $\theta_R\to 0$ as $R \to +\infty$. Which yields $\hat{u}\equiv 0$ in $B_1(0)$, contradicting $\hat{u} \in C^0\left( B_1(0)\right) $ and $\hat{u}(0)=1$.  This completes the proof of Step \ref{step3}.\qed

\begin{step}\label{step4}
We claim  that
\begin{equation*}
\lim_{R\to +\infty}	\lim_{\alpha\to +\infty} \sup_{x\in M\backslash B_{R\ma}(x_0)}d_g(x,x_0)^{\frac{n-2}{2}}u_{\alpha}(x)=0.
\end{equation*}
\end{step} 
\noindent{\it Proof of Step \ref{step4}:} The proof is a refinement of Step \ref{step3}. We omit it and we refer to \cite{DD} and Chapter 4 in Druet-Hebey-Robert \cite{DHR} where the case $s=0$ is dealt with.\qed

	\section{Refined blowup analysis: proof of Theorem \ref{prop1}}\label{sec:estimate}
We let $(u_\alpha)_\alpha\in H_1^2(M)$, $(a_\alpha)_\alpha\in C^1(M)$, $a_\infty\in C^1(M)$, $(\ll_\alpha)_\alpha\in\rr$ be such that \eqref{m0}-\eqref{m00} hold. The next Step towards the proof of Theorem \ref{prop1} is the following:

	\begin{step}\label{step5} We claim that there exists $\epsilon_0>0$ such that for any $\epsilon \in (0,\epsilon_0)$,  there exists $C_{\epsilon}>0$  such that 
	\begin{eqnarray}\label{m40}
	u_{\alpha}(x) \leq C_{\epsilon}  \frac{\ma^{\frac{n-2}{2}-\epsilon}}{d_g(x,x_0)^{n-2-\epsilon}} \bb{ for all } x \in M\backslash B_{R \ma}(x_0).
	\end{eqnarray}
	\end{step}
	
\noindent{\it Proof of Step \ref{step5}:}	Let $G$ be the Green function on $M$ of $\Delta_g +(a_{\infty}-\xi)$ where $\xi >0$. Up to taking $\xi$ small enough, the operator is coercive and the $G_{x_0}:=G(x_0,\cdot)$ is defined on $M\backslash \{x_{0}\}$. In others words, $G_{x_0}$ satisfies
	\begin{equation}\label{m35}
	\Delta_g G_{x_0}+(a_{\infty}-\xi)G_{x_0}=0 \bb{ in } M\backslash \{x_0\}.
	\end{equation}
	Estimates of the Green's function (see Robert\cite{RG}) yield for $\delta_0>0$ small the existence of $C_i>0$ for $i=1,2,3$ such that
	\begin{equation}\label{m19}
	C_2\, d_g(x,x_{0})^{2-n}\leq G_{x_0}(x)\leq C_1\, d_g(x,x_{0})^{2-n},
	\end{equation}
and,
\begin{eqnarray}\label{m37}
|\nabla G_{x_0}(x)|_g\geq C_3\, d_g(x,x_0)^{1-n},
\end{eqnarray}
for all $\alpha\in \nn$ and all $x\in B_{\delta_0}(x_0)\backslash \{x_0\}$.
	Define the operator 
	$$ M_{g,\alpha}:= \Delta_g+ a_{\alpha}-\ll_{\alpha}\frac{u_{\alpha}^{\crit-2}}{d_g(x,x_0)^s}.$$

\medskip\noindent{\it Step \ref{step5}.1:} We claim that there exists $\nu_0\in (0,1)$ and $R_0>0$ such that for any $\nu\in (0,\nu_0)$ and $R>R_0$, we have that 
	\begin{equation}\label{m38}
		M_{g,\alpha} G_{x_0}^{1-\nu}>0 \bb{ for all } x\in M\backslash B_{R\ma}(x_0).
	\end{equation}
\noindent{\it Proof of Step \ref{step5}.1:} With \eqref{m35}, we get that 
		\begin{align*}
		\frac{M_{g,\alpha} G_{x_0}^{1-\nu}}{G_{x_0}^{1-\nu}}(x)&=a_{\alpha}-a_{\infty}+\nu\left( a_{\infty}-\xi\right) +\xi +\nu\left( 1-\nu\right)\left| \frac{\nabla G_{x_0} }{G_{x_0}} \right|_g^2-\ll_{\alpha}\frac{u_{\alpha}^{\crit-2}}{d_g(x,x_0)^s},
		\end{align*}
		for all $x\in M\backslash \{x_0\}$. Using again \eqref{m0}, there exists $\alpha_0$  for all $\alpha>\alpha_0$ such that 
		\begin{equation*}
		a_{\alpha}(x)-a_{\infty}(x)\geq -\frac{\xi}{2} \bb{ for all } x\in M.
		\end{equation*}
		Take now $\nu_0\in (0,1)$ and we let $\nu\in (0,\nu_0)$, we get that 
		\begin{align}\label{m36}
		\frac{M_{g,\alpha} G_{x_0}^{1-\nu}}{G_{x_0}^{1-\nu}}(x)&\geq \frac{\xi}{4} +\nu\left( 1-\nu\right)\left| \frac{\nabla G_{x_0}}{G_{x_0}} \right|_g^2-\ll_{\alpha}\frac{u_{\alpha}^{\crit-2}}{d_g(x,x_0)^s}.
		\end{align}
		Fix $\rho>0$, it follows from the result of the Step \ref{step4} that there exists $R_0>0$ such that for any $R>R_0$ and for $\alpha> 0$ large enough, we obtain that 
		\begin{equation}\label{m21}
		d_g(x,x_0)^{\frac{n-2}{2}} u_{\alpha}(x)\leq \rho  \bb{ for } x\in M\backslash B_{R \ma}(x_0).
		\end{equation}
		We let $\nu\in (0,\nu_0)$ and $R>R_0$. We first let $x\in M$ such that $d_g(x,x_0)\geq \delta_0$, then from Corollary \ref{co2}  
		\begin{equation}\label{eq:conv0}
			\lim_{\alpha \to +\infty} u_{\alpha}(x)=0 \bb{ in } M\backslash B_{\delta_0}(x_0).
		\end{equation}
		From \eqref{m36} and \eqref{m1}, we have that 
		\begin{align*}
		\frac{M_{g,\alpha} G_{x_0}^{1-\nu}}{G_{x_0}^{1-\nu}}(x)&\geq \frac{\xi}{4} -2\mu_s(\rr^n)\frac{u_{\alpha}(x)^{\crit-2}}{\delta_0^s},
		\end{align*}
		and $\alpha\in \nn$. The convergence in \eqref{eq:conv0} yields \eqref{m38} when $d_g(x,x_0)\geq \delta_0$ for $\alpha$ large enough. 

\smallskip\noindent We now take $x\in B_{\delta_0}(x_0)\backslash B_{R\ma}(x_0)$. It follows from \eqref{m36}, \eqref{m21}, \eqref{m1}, \eqref{m19} and \eqref{m37} that,  
		\begin{align*}\label{m39}
		\frac{M_{g,\alpha} G_{x_0}^{1-\nu}}{G_{x_0}^{1-\nu}}(x)&\geq \frac{1}{d_g(x,x_0)^2}\left(  \nu\left( 1-\nu\right)\left( \frac{C_3}{C_1} \right)^2-2\mu_s(\rr^n)\rho^{\crit-2}\right) .
		\end{align*}
		Up to taking $\rho>0$ small enough, we then obtain \eqref{m38} when $x\in B_{\delta_0}(x_0)\backslash B_{R\ma}(x_0)$. This ends Step \ref{step5}.1.\qed 
	
\medskip\noindent{\it Step \ref{step5}.2:} We claim that there exists $C_R>0$ such that 
 \begin{eqnarray*}
u_{\alpha}(x) \leq C_R\,\mu_{\alpha}^{\frac{n-2}{2}-\nu(n-2)}G_{x_0}(x)^{1-\nu} \bb{ for any } x \in \partial B_{R\ma}(x_0) \bb{ and } \alpha \in \nn.
\end{eqnarray*}

\medskip\noindent{\it Proof of Step \ref{step5}.2:} It follows from \eqref{def:uta}, \eqref{convergence} and  \eqref{m19} that,
\begin{eqnarray*}
		u_{\alpha}(x)&\leq &C\,  \mu_{\alpha}^{-\frac{n-2}{2}} \\
		&=& C\,  \mu_{\alpha}^{-\frac{n-2}{2}} d_g(x,x_{0})^{-(2-n)(1-\nu)} d_g(x,x_{0})^{(2-n)(1-\nu)}\\
		&\leq & C\,C_2^{\nu-1}\, \mu_{\alpha}^{-\frac{n-2}{2}}d_g(x,x_{0})^{(n-2)(1-\nu)} G_{x_0}(x)^{1-\nu} \\
		&\leq&  C\,C_2^{\nu-1}\, R^{(n-2)(1-\nu)}\mu_{\alpha}^{\frac{n-2}{2}-\nu(n-2)}G_{x_0}(x)^{1-\nu}.
\end{eqnarray*}
This ends Step \ref{step5}.2.\qed

\medskip\noindent{\it Step \ref{step5}.3:} We claim that
\begin{eqnarray*}
 	u_{\alpha}(x) \leq C_R\,\mu_{\alpha}^{\frac{n-2}{2}-\nu(n-2)}G_{x_0}(x)^{1-\nu} \bb{ for any } x \in M\backslash B_{R\ma}(x_0).
 	\end{eqnarray*}
\medskip\noindent{\it Proof of Step \ref{step5}.3:} We define $ v_{\alpha}:= C_R\,\mu_{\alpha}^{\frac{n-2}{2}-\nu(n-2)}G_{x_0}(x)^{1-\nu}-u_{\alpha}$. Since $u_{\alpha}$ verifies \eqref{m2} and by  \eqref{m38}, we observe that 

\begin{eqnarray*}
 		M_{g,\alpha} v_{\alpha}&=&C_R\,\mu_{\alpha}^{\frac{n-2}{2}-\nu(n-2)}	M_{g,\alpha}G_{x_0}^{1-\nu} -	M_{g,\alpha}u_{\alpha}\\
 		&=&C_R\,\mu_{\alpha}^{\frac{n-2}{2}-\nu(n-2)}	M_{g,\alpha}G_{x_0}^{1-\nu}>0  \bb{ in }  M\backslash B_{R \ma}(x_0).
 \end{eqnarray*}
Then Step \ref{step5}.3 follows from this inequality, Step \ref{step5}.2 and the comparison principle (See Berestycki--Nirenberg-Varadhan \cite{BNV}). This ends Step \ref{step5}.3.\qed

\medskip\noindent We are in position to finish the proof of Step \ref{step5}. Step \ref{step5}.3  and \eqref{m19} yield
	\begin{equation}\label{m43}
	u_{\alpha}(x) \leq C_{R}^{\prime}  \frac{\ma^{\frac{n-2}{2}-\nu(n-2)}}{d_g(x,x_0)^{(n-2)(1-\nu)}} \bb{ for all } x \in M\backslash B_{R\ma}(x_0).
\end{equation}
\noindent On the other hand, in \eqref{m00}, for $x\in B_{R\ma}(x_0)\setminus\{x_0\}$ and $\nu \in (0,\nu_0)$
\begin{eqnarray*}
	u_{\alpha}(x)&\leq& \ma ^{-\frac{n-2}{2}}\leq \ma ^{\frac{n-2}{2}-\nu(n-2)} \ma ^{(\nu-1)(n-2)}\nonumber\\
&\leq& R^{(1-\nu)(n-2)} \frac{ \ma ^{\frac{n-2}{2}-\nu(n-2)}}{d_g(x,x_0)^{(1-\nu)(n-2)}} \bb{ for all } x \in B_{R\ma}(x_0). 
\end{eqnarray*}
Up to taking $C^{\prime}_R$ larger and $\epsilon=(n-2)\nu$, by \eqref{m43}, we get  inequality \eqref{m40}. This ends Step \ref{step5}.\qed

\begin{step}\label{step6}
 We claim that there exists $C>0$ such that 
\begin{equation}\label{m411}
d_g(x,x_0)^{n-2}u_{\alpha}(x_{\alpha})u_{\alpha}(x) \leq C  \bb{ for all } x \in M.
\end{equation}
\end{step}
\noindent{\it Proof of Step \ref{step6}:} We let $(y_{\alpha})_\alpha\in M$ be such that $$ \sup_{x\in M}d_g(x,x_0)^{n-2}u_{\alpha}(x_{\alpha})u_{\alpha}(x)=d_g(y_{\alpha},x_0)^{n-2}u_{\alpha}(x_{\alpha})u_{\alpha}(y_{\alpha}).$$ The claim is equivalent to proving that for any $y_{\alpha}$, we have that
$$ d_g(y_{\alpha},x_0)^{n-2}u_{\alpha}(x_{\alpha})u_{\alpha}(y_{\alpha})=O(1) \bb{ as } \alpha \to +\infty.$$
We distinguish two cases:

\medskip\noindent {\bf Case 1:}  We assume that $ d_g(y_{\alpha},x_0)=O(\ma)$ as $\alpha \to +\infty$. Therefore, it follows from the definition of $\ma$ that
\begin{align*}
d_g(y_{\alpha},x_0)^{n-2}u_{\alpha}(x_{\alpha})u_{\alpha}(y_{\alpha})&\leq C \ma ^{n-2}u_{\alpha}^2(x_{\alpha})\leq C.
\end{align*}
This yields \eqref{m411}.

 \medskip\noindent {\bf Case 2:} We assume that 
\begin{equation}\label{m25}
 \lim_{\alpha\to +\infty} \frac{d_g(y_{\alpha},x_0)}{\ma}=+\infty.
\end{equation} 
Let  $G_{\alpha}$ be the Green's function of $\Delta_g+a_{\alpha}$ in $M$. Green's representation formula and standard estimates on the Green's function (see \eqref{m19} and Robert \cite{RG}) yield the existence of $C>0$ such that
\begin{align}
u_{\alpha}(y_{\alpha})&=\int_M G_{\alpha}(y_{\alpha},x) \ll_{\alpha}\frac{\ua^{\crit -1 }(x)}{d_g(x,x_0)^s} \, dv_g\nonumber\\
&\leq C \int_Md_g(x,y_{\alpha})^{2-n} \ll_{\alpha}\frac{\ua^{\crit -1 }(x)}{d_g(x,x_0)^s} \, dv_g.
\label{m301}
\end{align}
We fix $R>0$ and we write $M:= \cup_{i=1}^{4}\Omega_{i,\alpha}$ where 
\begin{eqnarray*}
	\Omega_{1,\alpha}:=B_{R\ma}(x_0) &\bb{ and }&	\Omega_{2,\alpha}:=\left\lbrace R \ma < d_g(x,x_0)< \frac{d_g(y_{\alpha},x_0)}{2}\right\rbrace, \\
	\Omega_{3,\alpha}:=\left\lbrace \frac{d_g(y_{\alpha},x_0)}{2} < d_g(x,x_0)<2 d_g(y_{\alpha},x_0)\right\rbrace &\bb{ and }& \Omega_{4,\alpha}:=\left\lbrace d_g(x,x_0)\geq  2d_g(y_{\alpha},x_0)\right\rbrace\cap M. \\
\end{eqnarray*}
\medskip\noindent{\it Step \ref{step6}.1:} We first deal with $\Omega_{1,\alpha}$.

\smallskip\noindent Using \eqref{m25}, we fix $C_0>R$. For  $\alpha$ large, we have that
	\begin{align*}
		d_g(y_{\alpha},x_0)&\geq C_0\, \ma \geq \frac{C_0}{R} \,  d_g(x,x_0)\hbox{ for all }x\in \Omega_{1,\alpha}.
	\end{align*}
Then since $C_0>R>1$, we get $d_g(x,y_{\alpha})\geq  \left( 1-\frac{R}{C_0}\right) 			d_g(y_{\alpha},x_0)$. 
	Therefore, we take $x=\exp_{x_{0}}(\ma X)$, then for $R>1$ there exists $C>0$ such that  
\begin{align}
\left|  \int_{\Omega_{1,\alpha}}d_g(x,y_{\alpha})^{2-n} \frac{\ua^{\crit -1 }(x)}{d_g(x,x_0)^s} \, dv_g\right| &\leq C\, 	d_g(y_{\alpha},x_0)^{2-n}\int_{\Omega_{1,\alpha}}\frac{\ua^{\crit -1 }(x)}{d_g(x,x_0)^s}\, dv_g\nonumber\\
&\leq C\,	\ma ^{\frac{n-2}{2}} 	d_g(y_{\alpha},x_0)^{2-n}\int_{B_R(0)}\frac{\utza^{\crit -1 }(X)}{|X|^s}\, dv_{\tilde{g}_{\alpha}},\label{m451}
\end{align}
where $\utza$, $\tilde{g}_{\alpha}$ are defined in \eqref{def:uta}, \eqref{def:tga}. Since $\utza\leq 1$, by applying Lebesgue's Convergence Theorem and thanks to Step \ref{step00}, we get that 
\begin{eqnarray}\label{m471}
\lim_{\alpha\to +\infty}\int_{B_R(0)}\frac{\utza^{\crit -1 }(X)}{|X|^s}\, dv_{\tilde{g}_{\alpha}}=\int_{B_R(0)}\frac{\ut^{\crit -1 }}{|X|^s}\, dX. 
\end{eqnarray}
Combining \eqref{m451} and \eqref{m471} yields
\begin{eqnarray}\label{m261}
\left|  \int_{\Omega_{1,\alpha}}d_g(x,y_{\alpha})^{2-n} \frac{\ua^{\crit -1 }(x)}{d_g(x,x_0)^s} \, dv_g\right|	&\leq C \, \ma ^{\frac{n-2}{2}}		d_g(y_{\alpha},x_0)^{2-n}.
\end{eqnarray}

\medskip\noindent{\it Step \ref{step6}.2:} We deal with $\Omega_{2,\alpha}$.

\smallskip\noindent Noting that $d_g(x,y_{\alpha})\geq d_g(y_{\alpha},x_0)- d_g(x,x_0)\geq \frac{1}{2}d_g(y_{\alpha},x_0)$ for all $x\in \Omega_{2,\alpha}$, we argue as in Step \ref{step6}.1  by using \eqref{m40} with $\epsilon>0$ small to get
\begin{align*}
&\left|  \int_{\Omega_{2,\alpha}}d_g(x,y_{\alpha})^{2-n} \frac{\ua^{\crit -1 }(x)}{d_g(x,x_0)^s} \, dv_g\right|\\
& \leq C \, 	d_g(y_{\alpha},x_0)^{2-n} \int_{\Omega_{2,\alpha}}\frac{\ua^{\crit -1 }(x)}{d_g(x,x_0)^s}\, dv_g\nonumber \\
& \leq C\,\ma^{(\frac{n-2}{2}-\epsilon)(\crit -1)}	d_g(y_{\alpha},x_0)^{2-n} \int_{\Omega_{2,\alpha}}d_g(x,x_0)^{-s-(n-2-\epsilon)(\crit-1)}\, dv_g\nonumber \\
& \leq C\,\ma^{(\frac{n-2}{2}-\epsilon)(\crit -1)}	d_g(y_{\alpha},x_0)^{2-n} \int_{M\backslash B_{R\ma }(x_0)}d_g(x,x_0)^{-s-(n-2-\epsilon)(\crit-1)}\, dv_g\nonumber \\
\end{align*}
\noindent Taking the change of variable $X=\exp_{x_{0}}^{-1}(x)$ and $\hat{g}=\exp_{x_{0}}^\star g$ on $\rr^n$, we get 
\begin{align*}
&\left|  \int_{\Omega_{2,\alpha}}d_g(x,y_{\alpha})^{2-n} \frac{\ua^{\crit -1 }(x)}{d_g(x,x_0)^s} \, dv_g\right|\nonumber\\
& \leq C\,\ma^{(\frac{n-2}{2}-\epsilon)(\crit -1)}d_g(y_{\alpha},x_0)^{2-n} \int_{\rr^n\backslash B_{R\ma }(0)}|X|^{-s-(n-2-\epsilon)(\crit-1)}\, dv_{\hat{g}}\\
& \leq C\,\ma^{(\frac{n-2}{2}-\epsilon)(\crit -1)}	d_g(y_{\alpha},x_0)^{2-n} \int_{\rr^n\backslash B_{R\ma }(0)}|X|^{-s-(n-2-\epsilon)(\crit-1)}\, dX\\
&\leq C\,\ma^{\frac{n-2}{2}}	d_g(y_{\alpha},x_0)^{2-n} \int_{R}^{+\infty}r^{s-2+\epsilon(\crit-1)-1}\, dr.
\end{align*}
Hence for $\epsilon>0$ sufficiently small, we have that 
\begin{equation}\label{m271}
\left|  \int_{\Omega_{2,\alpha}}d_g(x,y_{\alpha})^{2-n} \frac{\ua^{\crit -1 }(x)}{d_g(x,x_0)^s} \, dv_g\right|	\leq C_R\,\ma^{\frac{n-2}{2}}	d_g(y_{\alpha},x_0)^{2-n}  ,
\end{equation}
as $\alpha\to +\infty$, where $C_R\to 0$ as $R\to +\infty$.

\medskip\noindent{\it Step \ref{step6}.3:} We deal with $\Omega_{3,\alpha}$. For $\epsilon>0$  small in the control \eqref{m40}, we get 
\begin{align}
&\left|  \int_{\Omega_{3,\alpha}}d_g(x,y_{\alpha})^{2-n} \frac{\ua^{\crit -1 }(x)}{d_g(x,x_0)^s} \, dv_g\right|\nonumber\\
&\leq C\, \ma^{(\frac{n-2}{2}-\epsilon)(\crit -1)}	d_g(y_{\alpha},x_0)^{-s-(n-2-\epsilon)(\crit-1)} \int_{\Omega_{3,\alpha}}d_g(x,y_{\alpha})^{2-n}\, dv_g\nonumber.
\end{align}
It follows from the change of variable $x=\exp_{x_{0}}(X)$ and $y_{\alpha}=\exp_{x_{0}}(Y_{\alpha})$ that,
\begin{align*}
	&\left|  \int_{\Omega_{3,\alpha}}d_g(x,y_{\alpha})^{2-n} \frac{\ua^{\crit -1 }(x)}{d_g(x,x_0)^s} \, dv_g\right|\nonumber\\
	&\leq C\, \ma^{(\frac{n-2}{2}-\epsilon)(\crit -1)}d_g(y_{\alpha},x_0)^{-s-(n-2-\epsilon)(\crit-1)} \int_{\frac{1}{2}|Y_{\alpha}|<|X|<2|Y_{\alpha}|}|X-Y_{\alpha}|^{2-n}\, dv_{\hat{g}}\nonumber\\
	&\leq C\, \ma^{(\frac{n-2}{2}-\epsilon)(\crit -1)}d_g(y_{\alpha},x_0)^{-s-(n-2-\epsilon)(\crit-1)} \int_{\frac{1}{2}|Y_{\alpha}|<|X|<2|Y_{\alpha}|}|X-Y_{\alpha}|^{2-n}\, dX\nonumber\\
	&\leq C\, \ma^{(\frac{n-2}{2}-\epsilon)(\crit -1)}d_g(y_{\alpha},x_0)^{-s-(n-2-\epsilon)(\crit-1)} |Y_{\alpha}|^{2}\int_{\frac{1}{2}|<|X|<2}\left|X-\frac{Y_\alpha}{|Y_\alpha|}\right|^{2-n}\, dX\nonumber\\
		&\leq C\, \ma^{(\frac{n-2}{2}-\epsilon)(\crit -1)}d_g(y_{\alpha},x_0)^{-s-(n-2-\epsilon)(\crit-1)} d_g(y_{\alpha},x_0)^{2}\int_{|X|<2}\left|X-\frac{Y_\alpha}{|Y_\alpha|}\right|^{2-n}\, dX\nonumber\\
			&\leq C\, \ma^{(\frac{n-2}{2}-\epsilon)(\crit -1)}d_g(y_{\alpha},x_0)^{2-n-\frac{n-2}{2}(\crit-2)+\epsilon(\crit-1)} \int_{|X|<3}|X|^{2-n}\, dX\nonumber\\
	&\leq C\,\ma^{\frac{n-2}{2}}d_g(y_{\alpha},x_0)^{2-n}\left( \frac{\ma}{d_g(y_{\alpha},x_0)}\right) ^{(\frac{n-2}{2})(\crit-2)- \epsilon(\crit-1)}.
\end{align*}
Just take $\epsilon>0$ small, hence $(\frac{n-2}{2})(\crit-2)- \epsilon(\crit-1)>0$  and we obtain that, 
\begin{align}\label{m281}
\left|  \int_{\Omega_{3,\alpha}}d_g(x,y_{\alpha})^{2-n} \frac{\ua^{\crit -1 }(x)}{d_g(x,x_0)^s} \, dv_g\right|
\leq C\,\ma^{\frac{n-2}{2}}d_g(y_{\alpha},x_0)^{2-n}\left( \frac{\ma}{d_g(y_{\alpha},x_0)}\right) ^{(\frac{n-2}{2})(\crit-2)- \epsilon(\crit-1)}.
\end{align}

\medskip\noindent{\it Step \ref{step6}.4:} We deal with $\Omega_{4,\alpha}$. For $x\in\Omega_{4,\alpha}$, we have that
\begin{eqnarray*}
	d_g(x,y_{\alpha})\geq 	d_g(x,x_{0})- d_g(y_{\alpha},x_0)
	\geq \frac{1}{2} d_g(x,x_0).
\end{eqnarray*}
Taking $X=\exp_{x_{0}}^{-1}(x)$ and $Y_{\alpha}=\exp_{x_{0}}^{-1}(y_{\alpha})$, we obtain that
\begin{align*}
&\left|  \int_{\Omega_{4,\alpha}}d_g(x,y_{\alpha})^{2-n} \frac{\ua^{\crit -1 }(x)}{d_g(x,x_0)^s} \, dv_g\right| \\
&\leq C\,\ma^{(\frac{n-2}{2}-\epsilon)(\crit -1)} \int_{\Omega_{4,\alpha}}d_g(x,x_0)^{2-n-s-(n-2-\epsilon)(\crit-1)}\, dv_g \nonumber\\
&\leq C\,\ma^{(\frac{n-2}{2}-\epsilon)(\crit -1)}\int_{B_{\delta}(0)\backslash B_{2|Y_{\alpha}|}(0)}|X|^{2-n-s-(n-2-\epsilon)(\crit-1)}\, dv_{\hat{g}}\nonumber \\
&\leq C\,\ma^{(\frac{n-2}{2}-\epsilon)(\crit -1)}\int_{2|Y_{\alpha}|}^{+\infty}r^{-n+s+\epsilon(\crit-1)-1}\, dr.\nonumber 
\end{align*}
For $\epsilon>0$ sufficiently small, we get that 
\begin{equation} \label{m291}
\left|  \int_{\Omega_{4,\alpha}}d_g(x,y_{\alpha})^{2-n} \frac{\ua^{\crit -1 }(x)}{d_g(x,x_0)^s} \, dv_g\right| \leq C\, \ma^{\frac{n-2}{2}}d_g(y_{\alpha},x_0)^{2-n}\left( \frac{\ma}{d_g(x_0,y_{\alpha})}\right) ^{(\frac{n-2}{2})(\crit-2)- \epsilon(\crit-1)}.
\end{equation}
Plugging the equations \eqref{m261}-\eqref{m291} in \eqref{m301}, we get \eqref{m411}. This ends Step \ref{step6}.\qed

\begin{step}\label{stepf}
	We claim that there exists $C>0$, such that 
\begin{equation}\label{est:co}
	u_{\alpha}(x)\leq C \, \frac{\ma^{\frac{n-2}{2}}}{\ma^{n-2}+d_g(x,x_0)^{n-2}} \bb{ for all } x\in M.
\end{equation}
\end{step}
\noindent{\it Proof of Step \ref{stepf}:} Using \eqref{m411} and the definition of $\ma$ (see \eqref{m00}), we have

\begin{align*}
\left(\ma^{n-2}+d_g(x,x_0)^{n-2} \right)\ma^{-\frac{n-2}{2}} u_{\alpha}(x)&\leq  \ma^{\frac{n-2}{2}}u_{\alpha}(x)+C\\
&\leq \ma^{\frac{n-2}{2}}u_{\alpha}(x_{\alpha})+C\leq 1+C.
\end{align*}
This proves Theorem \ref{prop1} and ends Step \ref{stepf}. \qed

\medskip\noindent As a first remark, it follows from the definition \eqref{def:uta} and the pointwise control \eqref{ineq:up} of Theorem \ref{prop1} that
\begin{equation}\label{control:uta}
\uta(X)\leq \frac{C}{\left(1+|X|^2\right)^\frac{n-2}{2}}\hbox{ in }B_{\ma^{-1}\delta_0}(0).
\end{equation}
\begin{prop}\label{prop:estnabla}
	For all $R>0$, we claim that there exists $C>0$ such that 
	\begin{eqnarray}\label{m41}
	|\nabla u_{\alpha}(x)|_g \leq C \frac{\ma^{\frac{n-2}{2}}}{\left( d_g(x,x_0)^{2}+\ma^2\right)^{\frac{n-1}{2}} } \bb{ for all } x \in M\backslash B_{R\ma}(x_0),
	\end{eqnarray}	
	as $\alpha\to +\infty$.
\end{prop}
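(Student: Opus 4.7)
\noindent\emph{Proof proposal for Proposition \ref{prop:estnabla}:} The plan is to deduce the gradient bound from the pointwise control \eqref{ineq:up} via a standard rescaling-and-elliptic-regularity argument, exploiting the fact that on any ball of radius $\sim d_g(x,x_0)$ centered at $x\in M\setminus B_{R\ma}(x_0)$, the singular weight $d_g(\cdot,x_0)^{-s}$ is smooth and bounded.

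\smallskip
Fix $R>0$ large and $x\in M\setminus B_{R\ma}(x_0)$, and set $r_x:=\tfrac12 d_g(x,x_0)$. Note $r_x\geq \tfrac{R}{2}\ma$ and $r_x$ stays bounded from above by the diameter of $M$. First, I would introduce the rescaled function and metric
\[
v_{\alpha}(y):=r_x^{\frac{n-2}{2}}\,u_{\alpha}(\exp_{x}(r_x\,y)),
\qquad
\bar{g}_{\alpha}(y):=(\exp_{x}^{\star}g)(r_x\,y),
\]
on $B_{1/2}(0)\subset\rn$. A direct computation using \eqref{m2} and the invariance of the critical Hardy--Sobolev nonlinearity under this scaling gives
\[
\Delta_{\bar{g}_{\alpha}}v_{\alpha}+r_x^{2}\tilde{a}_{\alpha}\,v_{\alpha}=\ll_{\alpha}\frac{v_{\alpha}^{\crit-1}}{d_{\bar{g}_{\alpha}}(y,y_{0,\alpha})^{s}}\quad\text{in } B_{1/2}(0),
\]
where $\tilde{a}_{\alpha}(y):=a_{\alpha}(\exp_{x}(r_x y))$ and $y_{0,\alpha}:=r_x^{-1}\exp_{x}^{-1}(x_0)$ satisfies $|y_{0,\alpha}|=2$. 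In particular, on $B_{1/2}(0)$ the distance $d_{\bar{g}_{\alpha}}(\cdot,y_{0,\alpha})$ is bounded below by $3/2$, so the right-hand side is smooth on that ball.

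\smallskip
Second, I would use Theorem \ref{prop1} to control $v_\alpha$. For every $y\in B_{1/2}(0)$ one has $d_g(\exp_{x}(r_x y),x_0)\geq r_x\geq \tfrac R2\ma$, hence by \eqref{ineq:up},
\[
\|v_{\alpha}\|_{L^{\infty}(B_{1/2}(0))}
\leq C\,r_x^{\frac{n-2}{2}}\frac{\ma^{\frac{n-2}{2}}}{r_x^{n-2}}
=C\left(\frac{\ma}{r_x}\right)^{\!\frac{n-2}{2}},
\]
which is uniformly bounded (and in fact small). Combined with the uniform boundedness of $\tilde{a}_\alpha$, $r_x^2$, and $d_{\bar g_\alpha}(\cdot,y_{0,\alpha})^{-s}$ on $B_{1/2}(0)$, the right-hand side of the rescaled equation is uniformly bounded in $L^{\infty}(B_{1/2}(0))$ by $C\|v_\alpha\|_{L^\infty(B_{1/2}(0))}^{\crit-1}$.

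\smallskip
Third, I would invoke standard interior elliptic regularity (for instance $W^{2,p}$ Calder\'on--Zygmund estimates followed by Sobolev embedding, or Schauder estimates applied to the $C^{0,\beta}$ source term, as in the references on \eqref{eq:utza}) on metrics that converge uniformly to the Euclidean one, to obtain
\[
|\nabla_{\bar{g}_{\alpha}}v_{\alpha}(0)|\leq C\bigl(\|v_\alpha\|_{L^\infty(B_{1/2}(0))}+\|v_\alpha^{\crit-1}\|_{L^\infty(B_{1/2}(0))}\bigr)\leq C\,\|v_{\alpha}\|_{L^{\infty}(B_{1/2}(0))}.
\]
Undoing the scaling, $\nabla v_{\alpha}(0)=r_x^{n/2}\nabla u_{\alpha}(x)$, so
\[
|\nabla u_{\alpha}(x)|_g
=r_x^{-\frac{n}{2}}|\nabla v_{\alpha}(0)|
\leq C\,\frac{\ma^{\frac{n-2}{2}}}{r_x^{\,n-1}}
\leq C\,\frac{\ma^{\frac{n-2}{2}}}{d_g(x,x_0)^{\,n-1}}.
\]
Since $x\in M\setminus B_{R\ma}(x_0)$ forces $d_g(x,x_0)^{2}+\ma^{2}\sim d_g(x,x_0)^{2}$, the last display yields \eqref{m41}.

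\smallskip
The only delicate point is the use of the elliptic estimate with constants independent of $\alpha$ and of $x$; this is guaranteed because the rescaled metrics $\bar g_\alpha$ have uniformly controlled $C^1$-norms on $B_{1/2}(0)$ (their coefficients are $(\exp_x^\star g)(r_x \cdot)$ with $r_x$ bounded and $x$ varying in a compact set), the coefficient $r_x^{2}\tilde a_\alpha$ is uniformly bounded by \eqref{m0}, and the singular factor $d_{\bar g_\alpha}(\cdot, y_{0,\alpha})^{-s}$ is smooth and uniformly bounded on $B_{1/2}(0)$. Everything else is a scaling computation. \qed
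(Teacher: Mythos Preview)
Your argument is correct and is a genuinely different, more self-contained route than the one taken in the paper. The paper proceeds via the Green representation formula: writing
\[
|\nabla u_\alpha(y_\alpha)|\le \int_M |\nabla_y G_\alpha(y_\alpha,x)|_g\,\lambda_\alpha\frac{u_\alpha^{\crit-1}(x)}{d_g(x,x_0)^s}\,dv_g
\le C\int_M d_g(x,y_\alpha)^{1-n}\frac{u_\alpha^{\crit-1}(x)}{d_g(x,x_0)^s}\,dv_g,
\]
and then re-running the four-region decomposition of Step~\ref{step6} with the exponent $1-n$ in place of $2-n$, together with the already proved pointwise bound \eqref{ineq:up}. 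This has the advantage of being exactly parallel to what was done before and of using the Green-function machinery (and its gradient estimate $|\nabla G_\alpha|\le C d_g^{1-n}$) that is set up throughout the paper.

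Your rescaling-plus-interior-regularity approach bypasses the Green's function entirely: on each ball $B_{r_x/2}(x)$ the singular weight is uniformly bounded, so \eqref{m2} becomes a uniformly elliptic equation with bounded right-hand side, and one $W^{2,p}$ estimate followed by Sobolev embedding converts the $L^\infty$ control \eqref{ineq:up} into a gradient bound with the correct loss of one power of $r_x$. This is shorter and does not require the pointwise gradient estimate for $G_\alpha$. The only small caveat is the definition of $y_{0,\alpha}=r_x^{-1}\exp_x^{-1}(x_0)$, which literally requires $d_g(x,x_0)<i_g(M)$; when $d_g(x,x_0)\ge \delta_1$ for some fixed $\delta_1\in(0,i_g(M))$ you should simply replace $r_x$ by $\delta_1/2$ (the singular weight is then uniformly bounded on $B_{r_x/2}(x)$ without any reference to $y_{0,\alpha}$), and the same elliptic estimate gives the conclusion with constants depending only on $\delta_1$. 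With that minor adjustment the proof is complete.
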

\noindent{\it Proof of Proposition  \ref{prop:estnabla}:} Let $(y_{\alpha})_\alpha\in M$  be such that $$ \sup_{x\in M}\left( d_g(x,x_0)^{n-1}+\ma^{n-1}\right) u_{\alpha}(x_{\alpha})|\nabla u_{\alpha}(x)|_g=\left( d_g(y_{\alpha},x_0)^{n-1}+\ma^{n-1}\right) u_{\alpha}(x_{\alpha})|\nabla u_{\alpha}(y_{\alpha})|.$$ The claim is equivalent to proving that for any $y_{\alpha}$, we have that
$$\left( d_g(y_{\alpha},x_0)^{n-1}+\ma^{n-1}\right)u_{\alpha}(x_{\alpha})|\nabla u_{\alpha}(y_{\alpha})|_g=O(1) \bb{ as } \alpha \to +\infty.$$
We let $G_{\alpha}$ be the Green's function of $\Delta_g+a_{\alpha}$ in $M$. Green's representation formula and the estimates \eqref{m37} yield $C>0$ such that
\begin{align*}
|\nabla u_{\alpha}(y_{\alpha})|&\leq \int_M |\nabla G_{\alpha}(y_{\alpha},x)|_g \ll_{\alpha}\frac{\ua^{\crit -1 }(x)}{d_g(x,x_0)^s} \, dv_g(x)\nonumber\\
&\leq C \int_Md_g(x,y_{\alpha})^{1-n} \frac{\ua^{\crit -1 }(x)}{d_g(x,x_0)^s} \, dv_g(x).
\end{align*}
More generally, we prove that for any sequence $(y_{\alpha})_\alpha\in M$ such that $d_g(y_{\alpha},x_0)\geq R\ma$ for some $R>0$, then there exist $C>0$  such that 
\begin{eqnarray*}
|\nabla u_{\alpha}(y_{\alpha})| \leq C\, \frac{\ma^{\frac{n-2}{2}}}{\ma^{n-1}+d_g(y_{\alpha},x_0)^{n-1}} \bb{ as } \alpha\to +\infty.
\end{eqnarray*}
Then using the pointwise estimates \eqref{ineq:up} the proof goes exactly as in Step \ref{step6}. \qed
\begin{prop}\label{lem2}
 We  claim that 
\begin{equation}\label{lim:u:G}
\lim_{\alpha\to +\infty}\frac{u_{\alpha}}{\ma^{\frac{n-2}{2}}}=d_n \,G_{x_0} \bb{ in } C^2_{loc}(M\backslash \{x_0\}),
\end{equation}
where,
\begin{equation}\label{dn}
d_n:=\mu_s(\rr^n)\int_{\rr^n} \frac{\ut^{\crit -1}}{|X|^s}\, dX,
\end{equation}
and $G_{x_0}$ is the Green's function for $\Delta_g+a_{\infty}$ on $M$ at $x_0$.
\end{prop}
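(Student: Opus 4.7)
The plan is to apply Green's representation formula to $\ua$, rescale by $\ma^{(n-2)/2}$, and pass to the limit using Theorem \ref{prop1} together with the convergence $\uta\to \ut$ from Step \ref{step00}. Let $G_\alpha$ denote the Green's function of $\Delta_g + a_\alpha$ on $M$. For $x\in M\setminus\{x_0\}$, the representation formula for a solution of \eqref{m2} gives
\begin{equation*}
\frac{\ua(x)}{\ma^{(n-2)/2}} = \lambda_\alpha \int_M G_\alpha(x,y)\,\frac{\ua(y)^{\crit-1}}{\ma^{(n-2)/2}\, d_g(y,x_0)^s}\, dv_g(y).
\end{equation*}
I would split the integral at the scale $R\ma$: the inner piece over $B_{R\ma}(x_0)$ carries the mass producing the limit, while the outer piece is made arbitrarily small.

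For the inner piece, performing the change of variable $y=\exp_{x_0}(\ma X)$ and using the definition \eqref{def:uta} of $\uta$ recasts it as
\begin{equation*}
\int_{B_R(0)} G_\alpha\bigl(x,\exp_{x_0}(\ma X)\bigr)\, \lambda_\alpha\,\frac{\uta(X)^{\crit-1}}{|X|^s}\, dv_{\tilde g_\alpha}(X).
\end{equation*}
Since $x$ is fixed away from $x_0$, $G_\alpha(x,\cdot)$ is continuous at $x_0$ with value converging to $G_{x_0}(x)$ (by classical convergence of Green's functions under $C^1$-convergence of the coercive zeroth-order coefficients); combined with $\uta\to\ut$ in $C^{0,\beta}_{loc}$, $\lambda_\alpha\to\mu_s(\rn)$, the control \eqref{control:uta}, and Lebesgue's Theorem, this converges to $G_{x_0}(x)\,\mu_s(\rn)\int_{B_R(0)} |X|^{-s}\ut^{\crit-1}\, dX$, which tends to $d_n\, G_{x_0}(x)$ as $R\to +\infty$ by the definition \eqref{dn}.

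For the outer piece, I use the pointwise bound \eqref{ineq:up} to obtain
\begin{equation*}
\frac{\ua(y)^{\crit-1}}{\ma^{(n-2)/2}}\leq C\,\frac{\ma^{(n-2)(\crit-2)/2}}{\bigl(\ma^{n-2}+d_g(y,x_0)^{n-2}\bigr)^{\crit-1}},
\end{equation*}
together with the Green's bound $G_\alpha(x,y)\leq C\, d_g(x,y)^{2-n}$ (see \eqref{m19} and \cite{RG}). The remaining integral is handled by a decomposition of $M\setminus B_{R\ma}(x_0)$ essentially identical to that of Steps \ref{step6}.2--\ref{step6}.4, now with the fixed point $x$ playing the role of $y_\alpha$; because $d_g(x,x_0)$ stays bounded below, the analog of the annular region $\Omega_{3,\alpha}$ is benign and only the standard Giraud-type convolution estimates are needed. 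The main obstacle is precisely this uniform control of the tail integral and the extraction of a $\theta_R + o_\alpha(1)$ bound, with $\theta_R\to 0$ as $R\to+\infty$; once achieved, combining with the inner contribution yields the pointwise convergence $\ua(x)/\ma^{(n-2)/2}\to d_n\, G_{x_0}(x)$ on $M\setminus\{x_0\}$.

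To upgrade from pointwise to $C^2_{loc}(M\setminus\{x_0\})$ convergence, set $v_\alpha:=\ua/\ma^{(n-2)/2}$; the equation \eqref{m2} rewrites as
\begin{equation*}
\Delta_g v_\alpha + a_\alpha v_\alpha = \lambda_\alpha\,\ma^{(n-2)(\crit-2)/2}\,\frac{v_\alpha^{\crit-1}}{d_g(x,x_0)^s}\bb{ in } M\setminus\{x_0\}.
\end{equation*}
On any compact $K\subset M\setminus\{x_0\}$, Theorem \ref{prop1} gives the uniform bound $v_\alpha\leq C_K$, and since $\crit>2$ the right-hand side tends to $0$ uniformly on $K$. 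Standard elliptic regularity and the $C^1$-convergence of $a_\alpha$ then promote the pointwise limit to $C^2_{loc}$ convergence, establishing \eqref{lim:u:G}.
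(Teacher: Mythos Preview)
Your approach is correct and follows the same overall strategy as the paper: Green's representation formula, the pointwise control of Theorem~\ref{prop1}, and elliptic regularity to upgrade to $C^2_{loc}$. The one difference is the splitting scale. You cut at $R\ma$ and then take a double limit $\alpha\to+\infty$, $R\to+\infty$, which forces you to control the tail over $M\setminus B_{R\ma}(x_0)$ via a dyadic decomposition in the style of Step~\ref{step6}. The paper instead cuts at a \emph{fixed} radius $\delta'<d_g(y,x_0)$: the outer integral over $M\setminus B_{\delta'}(x_0)$ is then immediately $O(\ma^{\frac{n-2}{2}(\crit-1)})=o(\ma^{\frac{n-2}{2}})$ since $\ua\leq C\ma^{\frac{n-2}{2}}\delta'^{2-n}$ there, while the inner integral rescales to $B_{\delta'\ma^{-1}}(0)$, which exhausts $\rn$ as $\alpha\to+\infty$, so a single application of Lebesgue's theorem with the dominant \eqref{control:uta} yields $d_n\,G_{x_0}(y)$ directly. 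The paper's splitting is thus shorter and avoids both the double limit and the tail decomposition you invoke.
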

\noindent{\it Proof of Proposition \ref{lem2}:} We define $ v_{\alpha}:= \ma^{-\frac{n-2}{2}} u_{\alpha}$. Equation \eqref{m2} rewrites 
\begin{equation}\label{m48}
\left\{\begin{array}{ll}
\Delta_g  v_{\alpha} + a_{\alpha}   v_{\alpha}=\lambda_{\alpha}\ma^{\frac{n-2}{2}(\crit -2)}\frac{ v_{\alpha}^{\crit-1}}{d_g(x,x_0)^s} &\hbox{ in } M \backslash \{x_0\}, \\
v_{\alpha}\geq 0  &\hbox{ in } M \backslash \{x_0\}.
\end{array}\right.
\end{equation}
\noindent We fix $y\in M$ such that $y\neq x_0$. We choose $\delta^{\prime}\in (0,\delta)$ such that $d_g(y,x_0)> \delta^{\prime}$. Let $G_{\alpha}$ be the Green's function of $\Delta_g+a_{\alpha}$. Green's representation formula yields,
\begin{eqnarray*}
v_{\alpha}(y)&=& \ma^{-\frac{n-2}{2}}\ll_{\alpha}\int_M G_{\alpha}(y,x)\frac{\ua^{\crit -1}}{d_g(x,x_0)^s} dv_g\\
&=& \ma^{-\frac{n-2}{2}}\ll_{\alpha}\left( \int_{B_{\delta^{\prime}}(x_0)} G_{\alpha}(y,x)\frac{\ua^{\crit -1}}{d_g(x,x_0)^s} dv_g+\int_{M\backslash B_{\delta^{\prime}}(x_0)} G_{\alpha}(y,x)\frac{\ua^{\crit -1}}{d_g(x,x_0)^s} dv_g\right).
\end{eqnarray*}
On the other hand, since $d_g(x,y)\geq \frac{\delta^{\prime}}{2}$ in the second integral,  using the estimation of $G_{\alpha}$ (see \eqref{m19}) and Theorem \ref{prop1}, we get 
\begin{eqnarray*}
\int_{M\backslash B_{\delta^{\prime}}(x_0)} G_{\alpha}(y,x)\frac{\ua^{\crit -1}}{d_g(x,x_0)^s} dv_g&\leq& C \, \frac{\ma^{\frac{n-2}{2}(\crit -1)}}{{\delta^{\prime}}^{s+(n-2)(\crit -1)}}\int_{M\backslash B_{\delta^{\prime}}(x_0)} d_g(x,y)^{2-n}\, dv_g\\
&\leq&C_{\delta^{\prime}}\, \ma^{\frac{n-2}{2}(\crit -1)}Vol_g(M),
\end{eqnarray*}
we obtain that 
\begin{eqnarray*}
	v_{\alpha}(y)
	&=& \ma^{-\frac{n-2}{2}}\ll_{\alpha} \int_{B_{\delta^{\prime}}(x_0)} G_{\alpha}(y,x)\frac{\ua^{\crit -1}}{d_g(x,x_0)^s} dv_g+O(\ma^{2-s}) \bb{ as } \alpha \to +\infty\\
	&=&\ll_{\alpha} \int_{B_{\delta^{\prime}\ma^{-1}}(0)} G_{\alpha}(y,\exp_{x_0}(\ma X))\frac{\utza^{\crit -1}}{|X|^s} dv_{\tilde{g}_{\alpha}}+O(\ma^{2-s})\bb{ as } \alpha \to +\infty.
\end{eqnarray*}
Thanks again to Step \ref{step00}, \eqref{m1}, the pointwise control \eqref{control:uta} and Lebesgue's Convergence Theorem, we get
\begin{eqnarray}\label{m50}
	\lim_{\alpha\to +\infty}v_{\alpha}(y)= d_n\, G(y,x_0),
\end{eqnarray}
where $d_n$ is defined in \eqref{dn}. The definition of $v_{\alpha}$  and the estimates \eqref{ineq:up} yields, 
\begin{eqnarray*}
	v_{\alpha}(x)\leq c \,d_g(x,x_0)^{2-n} \bb{ for all } x\in M \bb{ and } \alpha\in \nn.
\end{eqnarray*}
\noindent Then, $v_{\alpha}$ is bounded in $L^{\infty}_{loc}(M\backslash \{x_0\}) $. It then follows from \eqref{m48}, \eqref{m50} and elliptic theory that the limit \eqref{m50} in $C^{2}_{loc}(M\backslash \{x_0\})$. This proves Proposition \ref{lem2}.\qed

\section{Direct consequences of Theorem \ref{prop1}}\label{sec:c0theory}

\begin{prop}\label{prop:low} Let $(\ua)_\alpha$ be as in Theorem \ref{prop1}. Let  $(y_{\alpha})_\alpha\in M$ be  such that $y_{\alpha}\to y_0$ as $\alpha\to +\infty$. Then

\begin{equation*}
\lim_{\alpha\to +\infty}\left(\frac{ \ma^{2-s}+\frac{d_g(y_{\alpha},x_0)^{2-s}}{K^{2-s}}}{\ma^{\frac{2-s}{2}}}\right)^{\frac{n-2}{2-s}}\ua(y_{\alpha})=\left\{\begin{array}{cl}
1 & \hbox{ if }y_0=x_0,\\
d_n\left( \frac{d_g(y_0,x_0)}{K}\right)  ^{n-2} G_{x_0}(y_0)& \hbox{ if }y_0\neq x_0,
\end{array}\right. 
\end{equation*}
	where,
	\begin{equation*}
	K^{2-s}=(n-2)(n-s)\mu_s(\rr^n)^{-1} \bb{ and }	d_n:=\mu_s(\rr^n)\int_{\rr^n} \frac{\ut^{\crit -1}}{|X|^s}\, dX,
	\end{equation*}
	and $G_{x_0}$ is the Green's function for $\Delta_g+a_{\infty}$ on $M$ at $x_0$.
\end{prop}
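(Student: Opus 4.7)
The plan is to split according to whether $y_0\neq x_0$ or $y_0=x_0$. The first case is immediate from Proposition \ref{lem2}: since $d_g(y_\alpha,x_0)\to d_g(y_0,x_0)>0$, the normalization factor is equivalent to $\mu_\alpha^{-(n-2)/2}(d_g(y_0,x_0)/K)^{n-2}$, while Proposition \ref{lem2} gives $u_\alpha(y_\alpha)=\mu_\alpha^{(n-2)/2}(d_n G_{x_0}(y_0)+o(1))$, and multiplying gives the asserted limit.

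For $y_0=x_0$, I would introduce the rescaled point $Y_\alpha:=\mu_\alpha^{-1}\exp_{x_0}^{-1}(y_\alpha)$, so that (radial isometry of the exponential map) $d_g(y_\alpha,x_0)=\mu_\alpha|Y_\alpha|$ and $u_\alpha(y_\alpha)=\mu_\alpha^{-(n-2)/2}\tilde u_\alpha(Y_\alpha)$. A direct computation reduces the target quantity to
\begin{equation*}
F_\alpha:=\Bigl(1+\tfrac{|Y_\alpha|^{2-s}}{K^{2-s}}\Bigr)^{(n-2)/(2-s)}\tilde u_\alpha(Y_\alpha),
\end{equation*}
and the explicit form \eqref{eq:utilde} of $\tilde u$ yields the algebraic identity $\tilde u(Y)\bigl(1+|Y|^{2-s}/K^{2-s}\bigr)^{(n-2)/(2-s)}\equiv 1$. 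If $(|Y_\alpha|)_\alpha$ remains bounded, I would extract a convergent subsequence $Y_\alpha\to Y_*\in\rr^n$ and invoke the $C^0_{loc}$-convergence $\tilde u_\alpha\to\tilde u$ from Step \ref{step00} to conclude $F_\alpha\to 1$ directly from the identity.

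The delicate regime is $|Y_\alpha|\to+\infty$, where the task becomes showing $\tilde u_\alpha(Y_\alpha)|Y_\alpha|^{n-2}\to K^{n-2}$, equivalently $d_g(y_\alpha,x_0)^{n-2}\mu_\alpha^{-(n-2)/2}u_\alpha(y_\alpha)\to K^{n-2}$. For this I would revisit the Green's representation argument of Proposition \ref{lem2} with the moving base point $y_\alpha$: after the change of variables $X=\mu_\alpha^{-1}\exp_{x_0}^{-1}(x)$ on $B_{\delta'}(x_0)$ and the same tail estimate as in that proof, one has
\begin{equation*}
u_\alpha(y_\alpha)=\lambda_\alpha\mu_\alpha^{(n-2)/2}\int_{B_{\delta'\mu_\alpha^{-1}}(0)}G_\alpha(y_\alpha,\exp_{x_0}(\mu_\alpha X))\frac{\tilde u_\alpha^{\crit-1}(X)}{|X|^s}\,dv_{\tilde g_\alpha}+O\bigl(\mu_\alpha^{(n+2-2s)/2}\bigr).
\end{equation*}
Because $y_\alpha\to x_0$ and $|Y_\alpha|\to+\infty$, for any fixed $R$ and $|X|\leq R$ one has $d_g(y_\alpha,\exp_{x_0}(\mu_\alpha X))\to 0$ and $d_g(y_\alpha,\exp_{x_0}(\mu_\alpha X))/d_g(y_\alpha,x_0)\to 1$. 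The singular expansion of the Green's function near its pole then gives $d_g(y_\alpha,x_0)^{n-2}G_\alpha(y_\alpha,\exp_{x_0}(\mu_\alpha X))\to 1/((n-2)\omega_{n-1})$ uniformly on bounded $X$-sets, while the pointwise control \eqref{control:uta} provides an $L^1$-envelope for $\tilde u_\alpha^{\crit-1}/|X|^s$. Lebesgue dominated convergence yields
\begin{equation*}
d_g(y_\alpha,x_0)^{n-2}\mu_\alpha^{-(n-2)/2}u_\alpha(y_\alpha)\longrightarrow\frac{\mu_s(\rr^n)}{(n-2)\omega_{n-1}}\int_{\rr^n}\frac{\tilde u^{\crit-1}}{|X|^s}\,dX=\frac{d_n}{(n-2)\omega_{n-1}},
\end{equation*}
and the identity $d_n/((n-2)\omega_{n-1})=K^{n-2}$ follows from applying Green's representation on $\rr^n$ to $\tilde u$ itself via \eqref{eq:tu} and matching the resulting leading $|X|^{2-n}$-tail with the explicit decay $\tilde u(X)\sim K^{n-2}|X|^{-(n-2)}$ read off from \eqref{eq:utilde}.

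The main obstacle is the subcase $|Y_\alpha|\to+\infty$: the pole of $G_\alpha$ at $y_\alpha$ and the concentration region of $u_\alpha^{\crit-1}$ around $x_0$ both collapse to $x_0$, but at different rates, so one must simultaneously quantify the Green's singular expansion, the ratio $d_g(y_\alpha,\exp_{x_0}(\mu_\alpha X))/d_g(y_\alpha,x_0)\to 1$, and the decay of $\tilde u$ at infinity to apply dominated convergence legitimately. The sharp pointwise control \eqref{control:uta} from Theorem \ref{prop1} is precisely what makes this domination work uniformly in $\alpha$.
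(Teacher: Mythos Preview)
Your overall strategy matches the paper's: split by $y_0\neq x_0$ versus $y_0=x_0$, and in the latter case by whether $|Y_\alpha|$ stays bounded or tends to infinity, invoking Green's representation in the unbounded regime. The case $y_0\neq x_0$ and the bounded subcase are handled exactly as you describe, and your computation of the constant $d_n/((n-2)\omega_{n-1})=K^{n-2}$ via the tail of $\tilde u$ is correct (the paper obtains it by direct evaluation of the integral in \eqref{eq:fonctiongamma}).

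There is, however, a genuine gap in your treatment of $|Y_\alpha|\to+\infty$. Your appeal to dominated convergence is not justified as written: the \emph{full} integrand is
\[
d_g(y_\alpha,x_0)^{n-2}\,G_\alpha\bigl(y_\alpha,\exp_{x_0}(\mu_\alpha X)\bigr)\,\frac{\tilde u_\alpha^{\crit-1}(X)}{|X|^s},
\]
and while \eqref{control:uta} does give a fixed $L^1$ envelope for the factor $\tilde u_\alpha^{\crit-1}/|X|^s$, the Green's factor carries a moving pole at $X\approx Y_\alpha$ and is \emph{not} dominated by any function independent of $\alpha$. In normal coordinates one only has $d_g(y_\alpha,x_0)^{n-2}G_\alpha\leq C\bigl(|Y_\alpha|/|Y_\alpha-X|\bigr)^{n-2}$, which is unbounded and $\alpha$-dependent. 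The paper closes this gap by splitting the domain into $D_{1,\alpha}=\{x: d_g(y_\alpha,x)\geq\tfrac12 d_g(y_\alpha,x_0)\}$, where the Green's factor is uniformly bounded by $2^{n-2}$ and your dominated-convergence argument goes through verbatim, and $D_{2,\alpha}$, where one notes that $d_g(x,x_0)\geq\tfrac12 d_g(y_\alpha,x_0)$ and applies \eqref{ineq:up} and \eqref{m19} directly to bound that piece by $C\bigl(\mu_\alpha/d_g(y_\alpha,x_0)\bigr)^{2-s}=o(1)$. Your closing paragraph correctly identifies this as the obstacle but then asserts that \eqref{control:uta} alone suffices for domination; it does not---you need the near/far split around the Green's pole.
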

As a consequence, we get that
\begin{coro}\label{prop2ineq} Let $(\ua)_\alpha$ be as in Theorem \ref{prop1}. Then there exists $C>1$ such that 
	\begin{eqnarray*}
	\frac{1}{C}\, \frac{\ma^{\frac{n-2}{2}}}{\left( \ma^{2-s}+\frac{d_g(x,x_0)^{2-s}}{K^{2-s}}\right)^{\frac{n-2}{2-s}} }\leq u_{\alpha}(x)\leq  C\, \frac{\ma^{\frac{n-2}{2}}}{\left( \ma^{2-s}+\frac{d_g(x,x_0)^{2-s}}{K^{2-s}}\right)^{\frac{n-2}{2-s}} }.
	\end{eqnarray*}
\end{coro}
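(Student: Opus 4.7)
The plan is to reformulate the corollary as a uniform two-sided bound on the quotient
\[
\Phi_\alpha(x) := \frac{u_\alpha(x)}{F_\alpha(x)}, \qquad F_\alpha(x) := \frac{\ma^{(n-2)/2}}{\bigl(\ma^{2-s} + d_g(x,x_0)^{2-s}/K^{2-s}\bigr)^{(n-2)/(2-s)}},
\]
and then deduce this from Proposition \ref{prop:low} by a compactness-and-contradiction argument. First I would observe the purely algebraic identity $\ma^{(n-2)/2} = \bigl(\ma^{(2-s)/2}\bigr)^{(n-2)/(2-s)}$, which shows that the expression in the left-hand side of the limit in Proposition \ref{prop:low} is exactly $\Phi_\alpha(y_\alpha)$. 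Thus Proposition \ref{prop:low} can be rephrased as: for any sequence $y_\alpha \to y_0$ in $M$, $\Phi_\alpha(y_\alpha)$ converges to $1$ if $y_0 = x_0$, and to $d_n\bigl(d_g(y_0,x_0)/K\bigr)^{n-2} G_{x_0}(y_0)$ otherwise.

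Next, since $u_\alpha \in C^0(M)$ with $u_\alpha > 0$ (by the regularity and maximum principle of Jaber \cite{J1} recalled in the introduction), and $F_\alpha \in C^0(M)$ is bounded below by a positive constant (depending on $\alpha$), the function $\Phi_\alpha$ attains its maximum $M_\alpha$ at some $z_\alpha^+\in M$ and its minimum $m_\alpha$ at some $z_\alpha^-\in M$. For the upper bound, I would assume by contradiction that $M_\alpha \to +\infty$ along a subsequence, extract a further subsequence with $z_\alpha^+ \to z_0 \in M$, and invoke the reformulated Proposition \ref{prop:low}: the limit of $\Phi_\alpha(z_\alpha^+)$ is either $1$ (if $z_0 = x_0$) or $d_n\bigl(d_g(z_0,x_0)/K\bigr)^{n-2} G_{x_0}(z_0) < +\infty$ (if $z_0 \ne x_0$, since $G_{x_0}$ is smooth away from $x_0$). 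Either way this contradicts $M_\alpha \to +\infty$, and uniform boundedness of $\Phi_\alpha$ yields the desired upper bound.

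The lower bound is symmetric: assume $m_\alpha \to 0$ along a subsequence with $z_\alpha^- \to z_0$. The same application of Proposition \ref{prop:low} gives a strictly positive limit for $\Phi_\alpha(z_\alpha^-)$: either $1$ in the case $z_0 = x_0$, or $d_n\bigl(d_g(z_0,x_0)/K\bigr)^{n-2} G_{x_0}(z_0)$ in the case $z_0 \ne x_0$. This latter quantity is strictly positive because $d_n > 0$ (see \eqref{dn}) and, by the coercivity of $\Delta_g + a_\infty$, the Green's function $G_{x_0}$ is strictly positive on $M\setminus\{x_0\}$. Either outcome contradicts $m_\alpha \to 0$, and this completes the argument.

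The main conceptual point, and the only place where something nontrivial is used, is the positivity of the Green's function $G_{x_0}$ away from $x_0$; but this is a standard consequence of coercivity (via the maximum principle or an eigenvalue argument). Everything else is a routine compactness-contradiction wrapping of Proposition \ref{prop:low}, so I do not anticipate any serious obstacle once the algebraic identification of $\Phi_\alpha$ with the quotient $u_\alpha/F_\alpha$ has been made.
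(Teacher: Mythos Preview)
Your argument is correct and follows essentially the same compactness-and-contradiction route as the paper: the paper defines the same quotient $v_\alpha=\Phi_\alpha$, takes its minimum, and invokes Proposition \ref{prop:low} together with positivity of the Green's function to get the lower bound. The only difference is that for the upper bound the paper simply quotes the already-established estimate \eqref{ineq:up} of Theorem \ref{prop1} (which is equivalent up to constants to the stated upper bound), rather than running the symmetric contradiction argument you propose; your version works but is slightly redundant.
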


\medskip\noindent{\it Proof of Proposition \ref{prop:low}:} We recall $ \utza(X):= \ma^{\frac{n-2}{2}} u_{\alpha}(\exp_{x_0}(\ma X))$ and satisfies \eqref{eq:utza}. 
It follows from \eqref{convergence} in Step \ref{step00} that $\lim_{\alpha\to +\infty} \tilde{u}_{\alpha}= \tilde{u}$ in  $C^2_{loc}(\rr^n\backslash\{0\})\cap C^{0}_{loc}(\rr^n)$, where $\tilde{u}$ is as in Step \ref{step00} and satisfies \eqref{eq:tu}.  Let $G_{\alpha}$ be the Green's function of $\Delta_g+a_{\alpha}$. We fix $\delta'>0$. As in the proof of Proposition \ref{lem2}, we have that
\begin{eqnarray}\label{conv0}
u_{\alpha}(y_{\alpha})
&=& \ll_{\alpha} \int_{B_{\delta^{\prime}}(x_0)} G_{\alpha}(y_{\alpha},x)\frac{\ua^{\crit -1}}{d_g(x,x_0)^s} dv_g+o\left(\ma^{\frac{n-2}{2}}\right) \bb{ as } \alpha \to +\infty.
\end{eqnarray}

\noindent {\bf Case 1:} We first assume that $\lim_{\alpha\to +\infty}y_\alpha=y_0\neq x_0$. The result is a direct consequence of \eqref{lim:u:G}.

\medskip\noindent {\bf Case 2:} We assume that $\lim_{\alpha\to +\infty}y_\alpha=x_0$.

\smallskip\noindent {\bf Case 2.1:} We assume that there exists $L\in\rr$ such that
\begin{equation}\label{convL}
\frac{d_g(y_{\alpha},x_0)}{\ma}\to L \in\rr\bb{ as } \alpha \to +\infty.
\end{equation} 
We let $Y_{\alpha}\in\rn$ be such that $y_{\alpha}=\exp_{x_0}(\ma Y_{\alpha})$. It follows from \eqref{convL} that
	\begin{eqnarray}\label{convYalpha}
	|Y_{\alpha}|\to L \bb{ as } \alpha \to +\infty.
	\end{eqnarray}
We have that
$$d_{g}(y_{\alpha},x_0)^{n-2}\ma^{-\frac{n-2}{2}}u_{\alpha}(y_{\alpha})= \left( \frac{d_g(y_{\alpha},x_0)}{\ma}\right)^{n-2}\utza(Y_{\alpha}).$$
It then follows from the convergence \eqref{convergence}, \eqref{convL} and \eqref{convYalpha} that

\begin{eqnarray}
	\lim_{\alpha\to +\infty}d_{g}(y_{\alpha},x_0)^{n-2}\ma^{-\frac{n-2}{2}}u_{\alpha}(y_{\alpha})=\left( \frac{L^{2-s}}{1+\frac{L^{2-s}}{K^{2-s}}}\right)^{\frac{n-2}{2-s}}.\label{lim:L}
\end{eqnarray}

\medskip\noindent {\bf Case 2.2:} We assume that  
\begin{equation}\label{conyalpha}
y_\alpha\to x_0\hbox{ and }\frac{d_g(y_{\alpha},x_0)}{\ma}\to +\infty \bb{ as } \alpha \to +\infty.
\end{equation}
Coming back to \eqref{conv0}, we have as $\alpha\to +\infty$ that,
	\begin{eqnarray}
		d_{g}(y_{\alpha},x_0)^{n-2}\ma^{-\frac{n-2}{2}}u_{\alpha}(y_{\alpha})&=&d_g(y_{\alpha},x_0)^{n-2} \ma^{-\frac{n-2}{2}}\ll_{\alpha}\left(  \int_{D_{1,\alpha}} G_{\alpha}(y_{\alpha},x)\frac{\ua^{\crit -1}}{d_g(x,x_0)^s}\,  dv_g\right. \nonumber\\
		&&\left. + \int_{D_{2,\alpha}} G_{\alpha}(y_{\alpha},x)\frac{\ua^{\crit -1}}{d_g(x,x_0)^s}\,  dv_g\right)+O\left( \ma^{\frac{n-2}{2}(\crit -2)}\right),\label{eq:Dalpha}
	\end{eqnarray} 
	with, $$ D_{1,\alpha}:=\left\lbrace x\in B_{\delta}(x_0); d_{g}(y_{\alpha},x)\geq \frac{1}{2}d_g(y_{\alpha},x_0) \right\rbrace \bb{ and } D_{2,\alpha}:= B_{\delta}(x_0)\backslash D_{1,\alpha}.$$ 
	With a change of variable, we get 
	\begin{equation}\label{est:G:u:1}
		\ma^{-\frac{n-2}{2}}  \int_{D_{1,\alpha}} G_{\alpha}(y_{\alpha},x)\frac{\ua^{\crit -1}}{d_g(x,x_0)^s}\,  dv_g= \int_{D^{\prime}_{1,\alpha}} G_{\alpha}(y_{\alpha},\exp_{x_0}(\ma X))\frac{\utza^{\crit -1}}{|X|^s}\,  dv_{\tilde{g}_{\alpha}},
	\end{equation}
	where $D^{\prime}_{1,\alpha}=\ma^{-1}\exp_{x_0}(D_{1,\alpha})$. For $R>0$, we take  $X\in B_R(0)$ and  $z_{\alpha}:= \exp_{x_0}(\ma X)$,  by \eqref{conyalpha} we have that 
	\begin{eqnarray}\label{convyzalpha}
	\frac{d_g(y_{\alpha},z_{\alpha})}{\ma} \to +\infty \bb{ as } \alpha \to +\infty.
	\end{eqnarray}
	Writing, 
	$$ d_g(y_{\alpha},z_{\alpha})-d_g(z_{\alpha},x_0)\leq d_g(y_{\alpha},x_0)\leq d_g(y_{\alpha},z_{\alpha})+d_g(z_{\alpha},x_0),$$
	and nothing that $d_g(z_{\alpha},x_0)=\ma |X|$, we obtain that 
	\begin{eqnarray*}
		1-|X|\frac{\ma}{d_g(y_{\alpha},z_{\alpha})}\leq \frac{d_g(y_{\alpha},x_0)}{d_g(y_{\alpha},z_{\alpha})}\leq 1+|X|\frac{\ma}{d_g(y_{\alpha},z_{\alpha})},
	\end{eqnarray*}
	therefore, with \eqref{convyzalpha}, we get  
	\begin{equation*}
	\lim_{\alpha\to +\infty} \frac{d_g(y_{\alpha},x_0)}{d_g(y_{\alpha},z_{\alpha})}=1.
	\end{equation*}
	Therefore for all $R>0$, we have that $B_R(0)\subset D'_{1,\alpha}$ for $\alpha>0$ large enough. Moreover, since $d_g(y_{\alpha},z_{\alpha})\to 0$ as $\alpha\to +\infty$ and by  Proposition $12$ in Robert \cite{RG}, we have 
	\begin{eqnarray*}
		\lim_{\alpha\to +\infty	} d_g(y_{\alpha},x_0)^{n-2}G_{\alpha}(y_{\alpha},z_{\alpha})&=& \left( \frac{d_g(y_{\alpha},x_0)}{d_g(y_{\alpha},z_{\alpha})} \right)^{n-2} d_g(y_{\alpha},z_{\alpha})^{n-2}G_{\alpha}(y_{\alpha},z_{\alpha})\\
		&=& \frac{1}{(n-2)\omega_{n-1}},
	\end{eqnarray*}
	where $\omega_{n-1}$ is the volume of the unit $(n - 1)$-sphere. It then follows from \eqref{est:G:u:1}, \eqref{m19} the pointwise control \eqref{control:uta} and Lebesgue's Convergence Theorem that
	\begin{eqnarray}
	\lim_{\alpha \to +\infty}d_g(y_{\alpha},x_0)^{n-2}\ma^{-\frac{n-2}{2}}\ll_{\alpha}  \int_{D_{1,\alpha}} G_{\alpha}(y_{\alpha},x)\frac{\ua^{\crit -1}}{d_g(x,x_0)^s}\,  dv_g\nonumber\\
	=\mu_s(\rr^n)\frac{1}{(n-2)\omega_{n-1}}\int_{\rr^n} \frac{\ut^{\crit-1}}{|X|^s}\,  dX\label{convstep20}
	\end{eqnarray}
 Now, going back to the definition of $\tilde{u}$ (see \eqref{eq:utilde}) and with a change of variable,
	\begin{eqnarray}
		\int_{\rr^n} \frac{\ut^{\crit-1}}{|X|^s}\,  dX
		&=&K^{n-s} \int_{\rr^n} |X|^{-s}\left(1+|X|^{2-s} \right)^{-\frac{n-2}{2-s}(\crit-1)}\, dX\nonumber\\
		&=& K^{n-s}\omega_{n-1} \int_{0}^{+\infty} \frac{r^{n-s-1}}{\left(1+r^{2-s} \right)^{\frac{n-2}{2-s}(\crit-1)}}\, dr\nonumber\\
			&=& K^{n-s}\frac{\omega_{n-1}}{2-s} \int_{0}^{+\infty} \frac{t^{\frac{n-s}{2-s}-1}}{\left(1+t \right)^{\frac{n-2s+2}{2-s}}}\, dr\nonumber\\
		&=& K^{n-s}\frac{\omega_{n-1}}{2-s} \frac{\Gamma(\frac{n-s}{2-s})\Gamma(1)}{\Gamma(\frac{n-s}{2-s}+1)}=K^{n-s}\frac{\omega_{n-1}}{n-s}.\label{eq:fonctiongamma}
	\end{eqnarray}
	Since $\mu_s(\rr^n)=K^{s-2}(n-2)(n-s)$ and by \eqref{convstep20}, we get
	\begin{equation}\label{convstep21}
	\lim_{\alpha \to +\infty}d_g(y_{\alpha},x_0)^{n-2}\ma^{-\frac{n-2}{2}}\ll_{\alpha}  \int_{D_{1,\alpha}} G_{\alpha}(y_{\alpha},x)\frac{\ua^{\crit -1}}{d_g(x,x_0)^s}\,  dv_g= K^{n-2}.
	\end{equation} 
	Note that $d_g(x,x_0)\geq\frac{1}{2} d_g(y_\alpha,x_0)$ for all $x\in D_{2,\alpha}$. Then, it follows from  \eqref{ineq:up}, \eqref{m19} and \eqref{conyalpha} that
		\begin{eqnarray}
	&&\ma^{-\frac{n-2}{2}} d_g(y_{\alpha},x_0)^{n-2} \int_{D_{2,\alpha}} G_{\alpha}(y_{\alpha},x)\frac{\ua^{\crit -1}}{d_g(x,x_0)^s}\,  dv_g\nonumber\\
	&&\leq C\,	\left(\frac{\ma}{d_g(y_{\alpha},x_0)} \right)^{2-s}\frac{1}{d_g(y_{\alpha},x_0)^{2}}\int_{D_{2,\alpha}}  d_g(y_{\alpha},x)^{2-n}\, dv_g\nonumber\\
	&&\leq C\, \left(\frac{\ma}{d_g(y_{\alpha},x_0)} \right)^{2-s}=o(1). \label{convstep22} 
	\end{eqnarray}
	Combining \eqref{eq:Dalpha}, \eqref{convstep21} and  \eqref{convstep22}, we write that
\begin{equation}\label{convstep200}
\lim_{\alpha \to +\infty} d_{g}(y_{\alpha},x_0)^{n-2}\ma^{-\frac{n-2}{2}}u_{\alpha}(y_{\alpha})=K^{n-2}.
\end{equation}
Proposition \ref{prop:low} is a direct consequence of \eqref{lim:u:G}, \eqref{lim:L} and \eqref{convstep200}.\qed 

\medskip\noindent{\it Proof of Corollary \ref{prop2ineq}:} We define  
\begin{equation*}
v_{\alpha}(x):=\left( \frac{\ma^{2-s}+\frac{d_g(x,x_0)^{2-s}}{K^{2-s}}}{\ma^{\frac{2-s}{2}}}\right)^{\frac{n-2}{2-s}}u_{\alpha}(x)
\end{equation*}
for all $x\in M$ and $\alpha\in\nn$. We let $(y_{\alpha})_\alpha\in M$ be such that $v_{\alpha}(y_{\alpha})=\min_{x\in M} v_{\alpha}(x)$ for all $\alpha\in\nn$. Since $G_\alpha>0$, it follows from Proposition \ref{prop:low} that there exists $c_0>0$ such that $v_{\alpha}(y_{\alpha}) \geq c_0$ for all $\alpha\in\nn$. This yields the lower bound of Corollary \ref{prop2ineq}. The upper bound is \eqref{ineq:up}. This proves Corollary  \ref{prop2ineq}.\qed
\section{Pohozaev identity and proof of Theorem \ref{th1}}\label{sec:pohozaev}
We let $(u_{\alpha})_{\alpha}\in H^2_1(M)$, $(\ll_\alpha)_{\alpha}\in \rr$ and  $(a_\alpha)_{\alpha}, a_\infty$ in $C^1(M)$ be such that \eqref{m0} to \eqref{m00} hold. In the sequel, we fix $\delta \in (0, \frac{i_g(M)}{2})$ where $i_g(M)>0$ is the injectivity radius of $(M,g)$. We consider the following function,
\begin{equation*}
\uha(X):= u_{\alpha}(\exp_{x_{0}}(X))\bb{ for all } X\in B_{\delta}(0)\subset \rr^n,
\end{equation*}
where $\exp_{x_{0}}:B_{\delta}(0)\to B_{\delta}(x_{0})\subset M$ is the exponential map at $x_{0}$.
We define also the metric $$\hat{g}(X):=\left( \exp^\star_{x_{0}} g\right) (X) \bb{ on } \rr^n.$$
It then follows from \eqref{m2} that 
\begin{equation}\label{eq:uha}
	\Delta_{\hat{g}}  \uha + \hat{a}_{\alpha}   \uha=\lambda_{\alpha}\frac{ \uha^{\crit-1}}{|X|^s} \bb{ weakly in }  B_{\delta}(0),
	\end{equation}
	where $\hat{a}_{\alpha}= a_{\alpha}(\exp_{x_0}( X))$. For $l\geq 1$, the Pohozaev identity writes (see for instance Ghoussoub-Robert \cite{GRCalcVar})
	
\begin{eqnarray*}
&&\int_{B_{\delta}(0)} \left( X^l \partial_l \uha+\frac{n-2}{2}\uha \right)\left(\Delta_{Eucl} \uha-\lambda_{\alpha} \frac{\uha^{\crit-1}}{|X|^s}\right) \, dX\\
&&=\int_{\partial B_{\delta}(0)} \left( X, \nu\right) \left( \frac{|\nabla \uha |^2}{2} -\frac{\lambda_{\alpha}}{\crit}\frac{\uha^{\crit}}{|X|^s}\right)- \left(X^l \partial_l \uha+\frac{n-2}{2}\uha \right) \partial_{\nu}\uha \, d\sigma,
\end{eqnarray*}
where $\nu(X)$ is the outer normal vector of $B_{\delta}(0)$ at $X\in \partial B_{\delta}(0)$, that is $\nu(X)=\frac{X}{|X|}.$ With \eqref{eq:uha}, the Pohozaev identity writes
\begin{equation}\label{bcd}
C_\alpha+D_\alpha=B_\alpha,
\end{equation}
with
$$ B_{\alpha
}:=\int_{\partial B_{\delta}(0)} \left( X, \nu\right) \left( \frac{|\nabla \uha |^2}{2} -\frac{\lambda_{\alpha}}{\crit}\frac{\uha^{\crit}}{|X|^s}\right)- \left(X^l \partial_l \uha+\frac{n-2}{2}\uha \right) \partial_{\nu}\uha \, d\sigma.$$
$$C_{\alpha}:= -\int_{B_{\delta}(0)} \left( X^l \partial_l \uha+\frac{n-2}{2}\uha \right)  \hat{a}_{\alpha}\uha   \, dX,$$
and, 
$$D_{\alpha}:= -\int_{B_{\delta}(0)} \left( X^l \partial_l \uha+\frac{n-2}{2}\uha \right)  \left(  \Delta_{\hat{g}}\uha-\Delta_{Eucl}\uha  \right) \, dX.$$
We are going to estimate these terms separately.

\begin{step}\label{step:5} We claim that
\begin{equation}\label{balpha}
	\lim_{\alpha\to +\infty}\frac{B_\alpha}{\ma^{n-2}}=d_n^2\, \left( \int_{\partial B_{\delta}(0)} \frac{\delta}{2}|\nabla \hat{G}_{x_0} |^2  -\frac{1}{\delta}\left( \langle X, \nabla \hat{G}_{x_0}\rangle^2 +\frac{n-2}{2}\langle X, \nabla \hat{G}_{x_0}\rangle \hat{G}_{x_0} \right) \, d\sigma \right) ,
	\end{equation}
as $\alpha\to +\infty$, where $d_n$ is defined in \eqref{dn}, and  $\hat{G}_{x_{0}}(X)=G(x_0, \exp_{x_{0}}(X))$.
\end{step}
\smallskip\noindent{\it Proof of Claim \ref{step:5}:} It follows from the definition of $\hat{u}_{\alpha}$ that
\begin{eqnarray*}
\ma^{-\left( n-2\right) } B_{\alpha}&=& \int_{\partial B_{\delta}(0)} \left( X, \nu\right) \left(\frac{|\ma^{-\frac{n-2}{2}}\nabla \uha |^2}{2}  -\frac{\lambda_{\alpha}}{\crit} \ma^{2-s}\frac{\left( \ma^{-\frac{n-2}{2}}\uha\right) ^{\crit}}{|X|^s}\right)\\
&&- \left(X^l \ma^{-\frac{n-2}{2}}\partial_l \uha+\frac{n-2}{2}\ma^{-\frac{n-2}{2}}\uha \right) \ma^{-\frac{n-2}{2}}\partial_{\nu}\uha \, d\sigma.
\end{eqnarray*}
Since $\ma \to 0$ as $\alpha\to +\infty$, the convergence of Proposition \ref{lem2} yields \eqref{balpha}. This proves the claim.\qed

\medskip\noindent In this section, we will extensively use the following consequences of the pointwise estimates \eqref{ineq:up} and \eqref{m41}:
\begin{eqnarray}
\uha(X)&\leq&  C\left(\frac{\ma}{\ma^2+|X|^2}\right)^{\frac{n-2}{2}} \hbox{ in }B_{\delta}(0),\label{ineq:hu}\\
|\nabla\uha|(X)&\leq & C\frac{\ma^{\frac{n-2}{2}}}{\left(\ma^2+|X|^2\right)^{\frac{n-1}{2}}} \hbox{ in }B_{\delta}(0)\setminus B_{R\ma}(0),\label{ineq:nablahu}
\end{eqnarray}
and
\begin{eqnarray}
\utza(X)&\leq&  C\left(\frac{1}{1+|X|^2}\right)^{\frac{n-2}{2}} \hbox{ in }B_{\delta\ma^{-1}}(0),\label{ineq:tu}\\
|\nabla\utza|(X)&\leq & C\frac{1}{\left(1+|X|^2\right)^{\frac{n-1}{2}}} \hbox{ in }B_{\delta\ma^{-1}}(0)\setminus B_R(0),\label{ineq:nablatu}
\end{eqnarray}
where \begin{equation}\label{eq:uta}
\uta(X)= \ma^{\frac{n-2}{2}}u_\alpha(\exp_{x_0}(\ma X)) \bb{ for all } X\in B_{\delta\ma^{-1}}(0)\subset \rr^n.
\end{equation}

  \begin{step}\label{stepcalpha} We claim that, as $\alpha\to +\infty$, 
	\begin{equation*}\label{eq:20}
	C_{\alpha}=\left\{\begin{array}{ll}  
	\ma^2\ln\left(\frac{1}{\ma}\right)\left(  \omega_3K^4a_{\infty}(x_0)+o(1)\right) &\hbox{ if }n=4, \\\\
	\ma^2 \left(a_{\infty}(x_0)\int_{\rr^n} \tilde{u}^2 \, dX+o(1) \right) &\hbox{ if } n\geq 5,
	\end{array}\right.
	\end{equation*}
	where $K$ is defined in \eqref{defK}.
\end{step}
\noindent{\it Proof of Step \ref{stepcalpha}:} Using the definition of $C_{\alpha}$ and integrating by parts, we get 
\begin{eqnarray}
C_{\alpha}&&=  -\int_{B_{\delta}(0)} \left( X^l \hat{a}_{\alpha}\partial_l\left( \frac{\uha^2}{2} \right) +\frac{n-2}{2}\hat{a}_{\alpha}\uha^2 \right)    \, dX\nonumber\\
&&=
-\int_{B_{\delta}(0)} \left(-\frac{n}{2} \hat{a}_{\alpha} \uha^2-X^l\partial_l \hat{a}_{\alpha} \frac{\uha^2}{2} +\frac{n-2}{2}\hat{a}_{\alpha}\uha^2 \right)    \, dX\nonumber\\
&&-\frac{1}{2}\int_{\partial B_{\delta}(0)}\left(X, \nu \right) \hat{a}_{\alpha}\uha^2    \, d\sigma \nonumber\\
&&= \int_{B_{\delta}(0)} \left( \hat{a}_{\alpha} +\frac{X^l\partial_l \hat{a}_{\alpha}}{2}   \right) \uha^2   \, dX-\frac{1}{2}\int_{\partial B_{\delta}(0)}\left(X, \nu \right) \hat{a}_{\alpha}\uha^2    \, d\sigma.\label{calpha}
\end{eqnarray}
With \eqref{ineq:hu}, we can write that 
\begin{eqnarray*}
	\left| \int_{\partial B_{\delta}(0)}\left(X, \nu \right) \hat{a}_{\alpha}\uha^2    \, d\sigma\right|
	&\leq&C(\delta)\ma^{n-2}\int_{\partial B_{\delta}(0)}\frac{1}{|X|^{2(n-2)} } \, d\sigma,
\end{eqnarray*}
then, 
\begin{equation*}
\int_{\partial B_{\delta}(0)}\left(X, \nu \right) \hat{a}_{\alpha}\uha^2    \, d\sigma=O\left( \ma^{n-2}\right).
\end{equation*}
Moreover, with \eqref{calpha} we have 
\begin{equation*}
C_{\alpha}=\int_{B_{\delta}(0)} \left( \hat{a}_{\alpha} +\frac{X^l\partial_l \hat{a}_{\alpha}}{2}   \right) \uha^2   \, dX+O(\ma^{n-2}) \bb{ as } \alpha\to +\infty.
\end{equation*}
We now define
	\begin{equation}\label{varphialpha}
\varphi_{\alpha}(X):=\hat{a}_{\alpha} +\frac{X^l\partial_l \hat{a}_{\alpha}}{2}.
	\end{equation}
\noindent We distinguish three cases:

\noindent {\bf Case 1:} If $n\geq 5$, 
with a change of variable $X=\ma Y$, we get that 
\begin{eqnarray*}
	\int_{B_{\delta}(0)} \varphi_{\alpha}(X)\uha^2   \, dX&=& \ma^2\int_{B_{\delta\ma^{-1}}(0)} \varphi_{\alpha}(\ma X)\utza^2   \, dX,
\end{eqnarray*}
where $\uta$ is defined in \eqref{eq:uta}. Since $\ma\to 0$ as $\alpha\to +\infty$, we use \eqref{varphialpha} and  \eqref{m0}
$$ \lim_{\alpha\to+\infty}\varphi_{\alpha}(\ma X)=a_{\infty}(x_0).$$ 
Since $n\geq 5$, we have that $X\mapsto  \left( 1+|X|^2\right) ^{-\frac{n-2}{2}}\in L^{2}(\rr^n)$. Therefore, with the pointwise control \eqref{ineq:tu}, Lebesgue's dominated convergence theorem and Step \ref{step00} yield Step \ref{stepcalpha} when $n\geq 5$.

\medskip\noindent {\bf Case 2:} If $n=4$, we have that

\begin{eqnarray}
&&K^{-4}\int_{B_{\delta\ma^{-1}}(0)}\frac{1}{\left(1+\left( \frac{|X|}{K}\right) ^{2-s} \right)^{\frac{4}{2-s}} }\, dX =\int_{B_{\delta\ma^{-1}}(0)}\frac{1}{\left(K^{2-s}+|X|^{2-s} \right)^{\frac{4}{2-s}}}\, dX\nonumber\\
&&= \omega_3\int_{1}^{\delta\ma^{-1}}\frac{r^3}{\left(K^{2-s}+r^{2-s} \right)^{\frac{4}{2-s}} }\, dr+O(1)\nonumber\\
&&=\omega_3\int_{1}^{\delta\ma^{-1}} \frac{1 }{r }   \, dr+\int_{1}^{\delta\ma^{-1}}r^3 \left[  \frac{ 1-\left(1+\frac{1}{r^2} \right)^2 }{\left( 1+r^2\right)^{2} }\right]   \, dr+O(1)\nonumber\\
&&=\omega_3\ln\left(\frac{\delta}{\ma}\right)+O\left( \int_{1}^{\delta\ma^{-1}}\frac{1 }{r^3 } \, dr\right)+ O(1) \nonumber\\
&&=\omega_3\ln\left(\frac{\delta}{\ma}\right)+O(1).\label{convergen4}
\end{eqnarray}
Therefore, it follows from Proposition \ref{prop2ineq}, for any $\epsilon >0$, there exists $\delta_{\epsilon}>0$ such
that, up to a subsequence, for any $\alpha$ and any $X\in B_{\delta_{\epsilon}}(0)$, 
\begin{equation}\label{convergen40}
\frac{1}{1+\epsilon} \frac{1}{\left( 1+\left( \frac{|X|}{K}\right) ^{2-s}\right)^{\frac{2}{2-s}}}\leq 	\utza(X)\leq  \left( 1+\epsilon\right) \,  \frac{1}{\left( 1+\left( \frac{|X|}{K}\right) ^{2-s}\right)^{\frac{2}{2-s}}}.
\end{equation}
Combining the last equation and \eqref{convergen4}, by letting $\alpha\to +\infty$ and then $\epsilon\to 0$, we obtain that 
\begin{equation*}
\lim_{\alpha\to +\infty}\frac{1}{\ln(\frac{1}{\ma})} \int_{B_{\delta\ma^{-1}}(0)}\varphi_{\alpha}(X)\uha^2   \, dX =\omega_3K^4a_{\infty}(x_0).
\end{equation*}
This yields Step \ref{stepcalpha} for $n=4$. These two cases yield Step \ref{stepcalpha}.\qed

\smallskip\noindent We shall make frequent use of the following Lemma in dimension $n=4$.
	\begin{lem}\label{eq:Conv:n=4}
	For $i,j,\beta_1,\beta_2 \geq 1$, and $n=4$, we claim that
	\begin{equation}\label{conv:attends:nabla}
	\lim_{\alpha\to +\infty}	\frac{\int_{B_{\delta\ma^{-1}}(0)} X^{\beta_1}X^{\beta_2} \partial_i \utza \partial_j \utza \, dX }{ \ln(\frac{1}{\ma})}=\left( (n-2)K^{n-2}\right)^2\int_{S^{n-1}}\sigma^i \sigma^j \sigma^{\beta_1}\sigma^{\beta_2} \, d\sigma, 
	\end{equation}
	where $\uta$ is defined in \eqref{eq:uta}, and  
	\begin{equation}\label{eq:K}
	K^{2-s}=(n-2)(n-s)\mu_s(\rr^n)^{-1}.
	\end{equation}
\end{lem}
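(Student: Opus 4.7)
The plan is to split the integration domain $B_{\delta\ma^{-1}}(0)$ into an inner ball $B_R(0)$ with $R$ large but fixed, and the outer annular region $A_{\alpha,R}:=B_{\delta\ma^{-1}}(0)\setminus B_R(0)$, and to show that the entire $\ln(\frac{1}{\ma})$-divergence of the integral comes from the outer region. First I would check that the inner contribution is harmless: by the $C^2_{loc}$ convergence $\utza\to\tilde{u}$ of Step \ref{step00} together with the pointwise gradient bound \eqref{ineq:nablatu}, the integrand is uniformly bounded on $B_R(0)$ in $\alpha$, hence $\int_{B_R(0)} X^{\beta_1}X^{\beta_2}\partial_i\utza\,\partial_j\utza\,dX=O(1)$ and this piece vanishes once divided by $\ln(\frac{1}{\ma})$.

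The heart of the argument is the following uniform gradient asymptotic on the outer region: for every $\epsilon>0$ there exist $R_0>0$ and $\alpha_0$ such that for $R\ge R_0$ and $\alpha\ge\alpha_0$,
\[
\sup_{R\le |X|\le \delta\ma^{-1}}\left| |X|^{n-1}\partial_i\utza(X)+(n-2)K^{n-2}\frac{X^i}{|X|}\right|<\epsilon.
\]
This is obtained by combining two ingredients. Proposition \ref{prop:low} (Case 2.2) upgrades easily by a contradiction-and-compactness argument to the corresponding $C^0$-type estimate $|X|^{n-2}\utza(X)\to K^{n-2}$ uniformly for $R\le |X|\le \delta\ma^{-1}$ when $R,\alpha$ are large. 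This $C^0$ estimate then lifts to $C^1$ via elliptic regularity applied to the rescaling $V_\alpha^{r}(Y):=r^{n-2}\utza(rY)$ on the fixed annulus $\{1/2\le |Y|\le 2\}$: using \eqref{eq:utza}, the function $V_\alpha^r$ solves a uniformly elliptic equation whose coefficients and right-hand side remain bounded as $r\to\infty$ and $\alpha\to\infty$ with $r\le\delta\ma^{-1}$, and $V_\alpha^r\to K^{n-2}|Y|^{-(n-2)}$ in $C^0$, whence in $C^1$ on a slightly smaller annulus.

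With this asymptotic in hand, on $A_{\alpha,R}$ I can write
\[
\partial_i\utza(r\sigma)\,\partial_j\utza(r\sigma)=\bigl((n-2)K^{n-2}\bigr)^2\frac{\sigma^i\sigma^j}{r^{2(n-1)}}+o\!\bigl(r^{-2(n-1)}\bigr)
\]
uniformly in $\sigma\in S^{n-1}$ as $r,\alpha\to\infty$ with $r\le\delta\ma^{-1}$. In polar coordinates $X=r\sigma$ the volume element contributes $r^{n-1}\,dr\,d\sigma$ and the factor $X^{\beta_1}X^{\beta_2}$ contributes $r^{2}\sigma^{\beta_1}\sigma^{\beta_2}$, so for $n=4$ the radial integrand becomes $r^{3-n}\,dr=r^{-1}\,dr$. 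Integration from $R$ to $\delta\ma^{-1}$ yields $\ln(\frac{1}{\ma})+O(1)$, and together with the angular integration
\[
\int_{A_{\alpha,R}}X^{\beta_1}X^{\beta_2}\partial_i\utza\,\partial_j\utza\,dX=\bigl((n-2)K^{n-2}\bigr)^2\ln\!\Bigl(\tfrac{1}{\ma}\Bigr)\int_{S^{n-1}}\sigma^{\beta_1}\sigma^{\beta_2}\sigma^i\sigma^j\,d\sigma+o\!\Bigl(\ln\!\Bigl(\tfrac{1}{\ma}\Bigr)\Bigr).
\]
Dividing by $\ln(\frac{1}{\ma})$ and sending first $\alpha\to+\infty$ and then $R\to+\infty$ produces \eqref{conv:attends:nabla}. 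The main obstacle is the uniform gradient asymptotic above; once it is granted, the rest reduces to an elementary polar-coordinate computation, carefully tailored to the critical dimension $n=4$ where the exponent $3-n$ equals $-1$ and produces the logarithmic growth.
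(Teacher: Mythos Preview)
Your overall strategy is right, and close to the paper's, but the key uniform gradient asymptotic you claim over the \emph{full} annulus $R\le |X|\le \delta\ma^{-1}$ is false as stated. The input you invoke, Case~2.2 of Proposition~\ref{prop:low}, requires $y_\alpha\to x_0$, i.e.\ $\ma|X_\alpha|\to 0$. At the outer edge $|X_\alpha|=\delta\ma^{-1}$ this fails: the point $y_\alpha=\exp_{x_0}(\ma X_\alpha)$ sits at fixed distance $\delta$ from $x_0$, so it is Case~1 (equivalently Proposition~\ref{lem2}) that applies, giving
\[
|X_\alpha|^{n-2}\utza(X_\alpha)\longrightarrow \delta^{n-2}d_n\,G_{x_0}\!\bigl(\exp_{x_0}(\delta\sigma)\bigr)
= K^{n-2}+\bigl(\hbox{curvature corrections of size }O(\delta)\bigr),
\]
which is \emph{not} $K^{n-2}$ for fixed $\delta>0$. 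Hence your contradiction--compactness argument breaks down along sequences with $\ma|X_\alpha|\to c\in(0,\delta]$, and the same failure propagates to the $C^1$ upgrade via $V_\alpha^r$.

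The repair is easy and is exactly what the paper does: introduce intermediate radii $s_\alpha,t_\alpha$ with $s_\alpha\to\infty$, $\ma t_\alpha\to 0$, and $\ln(t_\alpha/s_\alpha)\sim\ln(1/\ma)$ (the paper takes $\theta_\alpha=|\ln\ma|^{-1/2}$, $s_\alpha=\ma^{-\theta_\alpha}$, $t_\alpha=\ma^{\theta_\alpha-1}$). On $s_\alpha\le|X|\le t_\alpha$ every subsequence satisfies the hypotheses of Case~2.2 (in fact of Step~\ref{procedure:conv:n=4}, which gives the needed $C^2$ control), so your asymptotic $|X|^{n-1}\partial_i\utza(X)\to -(n-2)K^{n-2}X^i/|X|$ holds uniformly there and your polar-coordinate computation goes through on this annulus. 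The leftover shells $R\le|X|\le s_\alpha$ and $t_\alpha\le|X|\le\delta\ma^{-1}$ are handled by the crude bound \eqref{ineq:nablatu}: each contributes at most a constant times $\ln(s_\alpha/R)$ or $\ln(\delta/(\ma t_\alpha))$, both $o(\ln(1/\ma))$ by construction. With this adjustment your argument and the paper's coincide.
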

\noindent\textit{Proof of the Lemma} \ref{eq:Conv:n=4}: We divide the proof into several steps.
\begin{step}\label{procedure:conv:n=4}
	We fix a family of parameters $(\beta_\alpha)\in (0,\infty)$ such that 
	\begin{equation}\label{eq:convbetaalpha}
	\lim_{\alpha\to +\infty}\beta_\alpha=0 \bb{ and } \lim_{\alpha\to +\infty}\frac{\ma}{\beta_\alpha}=0.
	\end{equation}
	Then, for all $X\in \rr^n\backslash \{0\}$, we have that 
	\begin{equation*}
	\lim_{\alpha\to +\infty} \frac{\beta_{\alpha}^{n-2}}{\ma^{\frac{n-2}{2}}}u_{\alpha}(\exp_{x_0}(\beta_{\alpha} X))=K^{n-2} |X|^{2-n}.
	\end{equation*}
	Moreover, this limit holds in $C^2_{loc}(\rr^n\backslash\{0\})$.
\end{step}
\noindent\textit{Proof of the Step} \ref{procedure:conv:n=4}: First, we define 
$$ w_{\alpha}(X):= \frac{\beta_{\alpha}^{n-2}}{\ma^{\frac{n-2}{2}}}u_\alpha(\exp_{x_0}(\beta_{\alpha}X)) \bb{ for all } X\in \rr^n\cap \beta_\alpha^{-1}U.$$
\medskip\noindent{\it Step \ref{procedure:conv:n=4}.1:} We claim that there exists $w\in C^2(\rr^n\backslash\{0\})$ 
\begin{equation*}
\lim_{\alpha\to +\infty} w_\alpha=w \bb{ in } C^2_{loc}(\rr^n\backslash\{0\}).
\end{equation*} 
Therefore, there exists $\Lambda\geq 0$ such that 
$$w(X)=\Lambda |X|^{2-n} \bb{ for all } X\in \rr^n.$$
\medskip\noindent{\it Proof of the Step \ref{procedure:conv:n=4}.1:} Since $u_\alpha$ satisfies \eqref{m2}, with the definition of $w_\alpha$, we have that
\begin{equation}\label{eq:hardywalpha}
\left\{\begin{array}{ll}
\Delta_g  w_{\alpha} + \beta_{\alpha}^2a_{\alpha}(\exp_{x_0}(\beta_{\alpha}X))   w_{\alpha}=\lambda_{\alpha}\, \left( \frac{\ma^{\frac{n-2}{2}}}{\beta_{\alpha}^{n-2}}\right)^{\crit-2}\beta_{\alpha}^{2-s} \frac{ w_{\alpha}^{\crit-1}}{|X|^s}&\hbox{ in }\rr^n\cap \beta_\alpha^{-1}U, \\
w_\alpha\geq 0  &\hbox{ in } \rr^n\cap \beta_\alpha^{-1}U.
\end{array}\right.
\end{equation}
Since $\crit >2$, we get with \eqref{eq:convbetaalpha} that
\begin{eqnarray*}
	\left( \frac{\ma^{\frac{n-2}{2}}}{\beta_{\alpha}^{n-2}}\right)^{\crit-2}\beta_{\alpha}^{2-s} &=&\left( \frac{\ma}{\beta_{\alpha}}\right)^{{\frac{n-2}{2}}(\crit-2)}=o(1) \bb{ as } \alpha\to +\infty.
\end{eqnarray*}
From the pointise control \eqref{ineq:up} that there existe $C>0$ such that 
\begin{eqnarray*}
	0< w_\alpha(X)\leq C \, |X|^{2-n} \bb{ for all } X\in \rr^n\cap \beta_\alpha^{-1}U.
\end{eqnarray*}
It follows from standard elliptic theory that there exists $w\in C^2(\rr^n\backslash\{0\})$ such that 
\begin{equation*}
\lim_{\alpha\to +\infty} w_\alpha=w \bb{ in } C^2_{loc}(\rr^n\backslash\{0\}).
\end{equation*}
Passing to the limit as $\alpha\to +\infty$ in \eqref{eq:hardywalpha}, 
\begin{equation*}
\left\{\begin{array}{ll}
\Delta_{Eucl}  w=0&\hbox{ in }\rr^n, \\
0\leq w(X)\leq C \, |X|^{2-n}& \hbox{ in }\rr^n,
\end{array}\right.
\end{equation*}
that there exists $\Lambda\geq 0$ such that 
$$w(X)=\Lambda |X|^{2-n} \bb{ for all } X\in \rr^n.$$ 
This ends Step \ref{procedure:conv:n=4}.1.\qed

\medskip\noindent{\it Step \ref{procedure:conv:n=4}.2:}  We are left with proving that $\Lambda=K^{n-2}$ defined in \eqref{eq:K}. 

\medskip\noindent{\it Proof of the Step \ref{procedure:conv:n=4}.2:} We fix $X\in \rr^n$. Let $G_\alpha$ the Green's function of $\Delta_g+a_\alpha$. Green's representation formula and the defintion of $w_\alpha$ yields,
\begin{eqnarray}
w_\alpha(X)&=&\ll_\alpha\int_M \frac{\beta_{\alpha}^{n-2}}{\ma^{\frac{n-2}{2}}}G_\alpha(\exp_{x_0}(\beta_\alpha X),y)\frac{u_{\alpha}(y)^{\crit-1}}{d_g(y,x_0)^s}\,dv_g\nonumber\\
&=&A_{\alpha}+B_\alpha,\label{eq:Aalpha+Balpha}
\end{eqnarray}
where, 
\begin{eqnarray*}
	&&A_{\alpha}:=\ll_\alpha\,\frac{\beta_{\alpha}^{n-2}}{\ma^{\frac{n-2}{2}}}\int_{ B_{R\ma}(x_0)\backslash B_{\delta\ma}(x_0)  } G_\alpha(\exp_{x_0}(\beta_\alpha X),y)\frac{u_{\alpha}(y)^{\crit-1}}{d_g(y,x_0)^s}\,dv_g,\\
	&&B_\alpha:=\ll_\alpha\,\frac{\beta_{\alpha}^{n-2}}{\ma^{\frac{n-2}{2}}}\int_{M\backslash\left( B_{R\ma}(x_0)\backslash B_{\delta\ma}(x_0)\right) } G_\alpha(\exp_{x_0}(\beta_\alpha X),y)\frac{u_{\alpha}(y)^{\crit-1}}{d_g(y,x_0)^s}\,dv_g.
\end{eqnarray*}
\medskip\noindent{\it Step \ref{procedure:conv:n=4}.2.1:} We claim that  
\begin{eqnarray}\label{eq:Aalpha}
\lim_{R\to +\infty, \delta\to 0}\lim_{\alpha\to +\infty} A_\alpha=\frac{K^{n-2}}{| X|^{n-2}}.
\end{eqnarray}
\medskip\noindent{\it Proof of the Step \ref{procedure:conv:n=4}.2.1:} Taking $y=\exp_{x_0}(\ma Y)$, we write 
\begin{equation}\label{eq:Aalpha1}
A_\alpha=\ll_\alpha\,\int_{ B_{R}(0)\backslash B_{\delta}(0)  } \beta_{\alpha}^{n-2}G_\alpha(x_\alpha,y_\alpha)\frac{\uta^{\crit-1}}{|Y|^s}\,dv_{\tilde{g}_{\alpha}},
\end{equation}
where,
$$x_\alpha:=\exp_{x_0}(\beta_\alpha X) \bb{ and }y_\alpha:=\exp_{x_0}(\ma Y),$$
and  $\uta$ is defined in \eqref{eq:uta} and 
\begin{equation*}
\tilde{g}_{\alpha}(x):=\left( \exp^\star_{x_{0}} g\right) (\mu_{\alpha}X)\hbox{ in }B_{\delta^{-1}\ma}(0).
\end{equation*}
The triangle inequality yields
\begin{eqnarray*}
	d_g(x_\alpha,x_0)-d_g(y_\alpha,x_0)\leq d_g(x_\alpha,y_\alpha)\leq d_g(x_\alpha,x_0)+d_g(y_\alpha,x_0),
\end{eqnarray*}
and since $d_g(x_\alpha,x_0)=\beta_\alpha|X|$ and $d_g(y_\alpha,x_0)=\ma |Y|$, we get that
\begin{eqnarray*}
	|X|-\frac{\ma}{\beta_{\alpha}}|Y|\leq \frac{d_g(x_\alpha,y_\alpha)}{\beta_\alpha}\leq |X|+\frac{\ma}{\beta_{\alpha}}|Y|,
\end{eqnarray*}
therefore, with \eqref{eq:convbetaalpha}, 
$$d_g(x_\alpha,y_\alpha)\to 0 \bb{ and }  \frac{\beta_\alpha}{d_g(x_\alpha,y_\alpha)}\to \frac{1}{|X|},$$
as $\alpha\to +\infty$. Therefore, it follows from  Proposition $12$ in Robert \cite{RG} that 
\begin{eqnarray*}
	\lim_{\alpha\to +\infty}\beta_\alpha^{n-2}	G_\alpha(x_\alpha,y_\alpha)
	&=&\lim_{\alpha\to +\infty}\left( \frac{\beta_\alpha}{d_g(x_\alpha,y_\alpha)}\right)^{n-2}	d_g(x_\alpha,y_\alpha)^{n-2}G_\alpha(x_\alpha,y_\alpha)\\
	&=&\frac{1}{(n-2)w_{n-1}} \frac{1}{| X|^{n-2}} 
\end{eqnarray*} 
uniformly. Therefore, with \eqref{eq:Aalpha1}, applying again Lebesgue's Convergence Theorem and thanks to Step \ref{step00} and the convergence in \eqref{m1}, we infer that  \begin{equation}\label{eq:Aalpha2}
A_\alpha=\frac{\mu_s(\rr^n)}{(n-2)w_{n-1}} \frac{1}{| X|^{n-2}}\int_{ B_{R}(0)\backslash B_{\delta}(0)  } \frac{\ut(Y)^{\crit-1}}{|Y|^s}\,dY+o(1) \bb{ as } \alpha \to + \infty.
\end{equation}
Other, going back to \eqref{eq:fonctiongamma}, we have that  
\begin{eqnarray*}
	\int_{ \rr^n  } \frac{\ut(Y)^{\crit-1}}{|Y|^s}\,dY= K^{n-s}\frac{\omega_{n-1}}{n-s} \bb{ and } \mu_s(\rr^n)=K^{s-2}(n-2)(n-s).
\end{eqnarray*}
Replacing the last equation in \eqref{eq:Aalpha2}, we get \eqref{eq:Aalpha}. This ends Step \ref{procedure:conv:n=4}.2.1.\qed

\medskip\noindent{\it Step \ref{procedure:conv:n=4}.2.2:}
\begin{eqnarray}\label{eq:Balpha}
\lim_{R\to +\infty, \delta\to 0}\lim_{\alpha\to +\infty} B_\alpha=0.
\end{eqnarray}
\medskip\noindent{\it Proof of the Step \ref{procedure:conv:n=4}.2.2:} By the definition of $B_\alpha$ and the estimate's on the Green's function (see \eqref{m19} and Robert \cite{RG}) and $u_\alpha$ see \eqref{ineq:up}, then there exists $C>0$ such that
\begin{eqnarray*}
	B_\alpha
		&\leq&C \, \ll_\alpha\frac{\beta_{\alpha}^{n-2}}{\ma^{-\frac{n-2}{2}(\crit-2)}}\int_{M\backslash\left( B_{R\ma}(x_0)\backslash B_{\delta\ma}(x_0)\right) } \frac{d_g(\exp_{x_0}(\beta_\alpha X),y)^{2-n}}{\left( \ma^2+d_g(y,x_0)^2\right)^{\frac{n-2}{2}(\crit-1)}d_g(y,x_0)^s}\,dv_g,
\end{eqnarray*}
For $R_0>0$, taking $y=\exp_{x_0}(\ma Y)$, we have that
\begin{eqnarray}
	B_\alpha&\leq& C \, \ll_\alpha \int_{ B_{R_0\ma^{-1}}(0)\backslash\left( B_{R}(0)\backslash B_{\delta}(0)\right) } \frac{| X-\frac{\ma}{\beta_\alpha} Y|^{2-n}}{\left(1+|Y|^2\right)^{\frac{n-2}{2}(\crit-1)}|Y|^s}\,dY\nonumber\\
	&=&C(B_{1,\alpha}+B_{2,\alpha}),\label{eq:Balpha12}
\end{eqnarray} 
where 
\begin{eqnarray*}
	B_{1,\alpha}= \int_{  B_{\delta}(0)} J_{\alpha}\,dY \bb{ and } B_{2,\alpha}=\int_{  \rr^n\backslash B_R(0)}  J_{\alpha}\,dY,
\end{eqnarray*}
and
 \begin{equation*}
J_{\alpha}:=\ll_\alpha \frac{| X-\frac{\ma}{\beta_\alpha} Y|^{2-n}}{\left(1+|Y|^2\right)^{\frac{n-2}{2}(\crit-1)}|Y|^s}.
\end{equation*}
 First, we estimate $B_{1,\alpha}$. Since $\frac{\ma}{\beta_\alpha}\to 0$, we have $|X-\frac{\ma}{\beta_\alpha} Y|^{2-n}\to |X|^{2-n}$ uniformly on  $B_{\delta}(0)$ as $\alpha \to +\infty$ and with the convergence of $\ll_\alpha$ in \eqref{m1}, we infer that
\begin{eqnarray*}
	B_{1,\alpha}&\leq& \left( C(X)+o(1)\right)\int_{ B_{\delta}(0) } \frac{1}{\left(1+|Y|^2\right)^{\frac{n-2}{2}(\crit-1)}|Y|^s}\,dY\\
	&\leq&\left( C(X)+o(1)\right)\int_{B_{\delta}(0) } \frac{1}{|Y|^{s}}\,dY\\
	&\leq&\left( C(X)+o(1)\right)\delta^{n-s}.
\end{eqnarray*}
and therefore,
\begin{equation}\label{eq:estb1alpha}
\lim_{\delta\to 0}\lim_{\alpha\to+\infty} B_{1,\alpha}=0.
\end{equation}
\smallskip\noindent We divide $B_{2,\alpha}$ as follows
\begin{equation}\label{eq:Balphad}
B_{2,\alpha}= \int_{R\leq |Y|\leq \frac{\beta_{\alpha}}{\ma}\frac{|X|}{2} }J_{\alpha}\, dY+\int_{\frac{\beta_{\alpha}}{\ma}\frac{|X|}{2} \leq |Y|\leq 2\frac{\beta_{\alpha}}{\ma}|X| }J_{\alpha}\, dY+\int_{|Y|\geq  2\frac{\beta_{\alpha}}{\ma}|X|}J_{\alpha}\, dY.
\end{equation}
Since $|Y|\leq \frac{\beta_{\alpha}}{\ma}\frac{|X|}{2}$, we have that $| X-\frac{\ma}{\beta_\alpha} Y|\geq \frac{|X|}{2}$. Therefore, with the convergence of $\ll_\alpha$ in \eqref{m1} and  $\lim_{\alpha\to +\infty}\frac{\ma}{\beta_{\alpha}}=0$, we get 
\begin{eqnarray}
\int_{R\leq |Y|\leq \frac{\beta_{\alpha}}{\ma}\frac{|X|}{2} }J_{\alpha}\, dY
&\leq& C(X)\, \int_{R\leq |Y|\leq \frac{\beta_{\alpha}}{\ma}\frac{|X|}{2} }\frac{1}{|Y|^{n-s+2}}\, dY\nonumber\\
&\leq&C(X)\left[\left( \frac{\ma}{\beta_{\alpha}} \right)^{2-s}\left( \frac{|X|}{2}\right)^{s-2} -R^{s-2}\right]\nonumber\\
&\leq&  C(X)R^{s-2}.\label{eq:Balphad1}
\end{eqnarray}
For the next term, a change of variable yields
\begin{eqnarray}
\int_{\frac{\beta_{\alpha}}{\ma}\frac{|X|}{2} \leq |Y|\leq 2\frac{\beta_{\alpha}}{\ma}|X| }J_{\alpha}\, dY&\leq& C(X)\, \left( \frac{\ma}{\beta_{\alpha}}\right)^{2-s}\int_{\frac{|X|}{2} \leq |Y|\leq 2|X| }|X-Y|^{2-n}\, dY\nonumber\\
&=&o(1),\label{eq:Balphad2}
\end{eqnarray}
as $\alpha\to +\infty$. Finaly, we estimate the last term. Since  $|Y|\geq  2\frac{\beta_{\alpha}}{\ma}|X|$, we have that $| X-\frac{\ma}{\beta_\alpha} Y|\geq |X|$. Therefore, it follows from the convergence of $\ll_\alpha$ in \eqref{m1} and  $\lim_{\alpha\to +\infty}\frac{\ma}{\beta_{\alpha}}=0$ that
\begin{eqnarray}
\int_{|Y|\geq  2\frac{\beta_{\alpha}}{\ma}|X|}J_{\alpha}\, dY&\leq& C(X)\, \int_{|Y|\geq  2\frac{\beta_{\alpha}}{\ma}|X|}\frac{1}{|Y|^{n-s+2}}\, dY\nonumber\\
&\leq&C(X)\left( \frac{\ma}{\beta_{\alpha}}\right)^{2-s}\nonumber\\
&=&o(1),\label{eq:Balphad3}
\end{eqnarray}
as $\alpha\to +\infty$. Combining \eqref{eq:Balphad}, \eqref{eq:Balphad1}, \eqref{eq:Balphad2} and \eqref{eq:Balphad3}, we infer that 
\begin{equation}\label{eq:estb2alpha}
\lim_{R\to +\infty}\lim_{\alpha\to +\infty} B_{2,\alpha}=0.
\end{equation}
 It follows from \eqref{eq:Balpha12}, \eqref{eq:estb1alpha} and \eqref{eq:estb2alpha}  that the result of this Step. This ends Step \ref{procedure:conv:n=4}.2.2. \qed

\medskip\noindent The equations \eqref{eq:Aalpha+Balpha}, \eqref{eq:Aalpha}, and \eqref{eq:Balpha} yields the result of the Step \ref{procedure:conv:n=4}.2 \qed
\begin{step}\label{conv:n=4}
	We claim that the result of Lemma \ref{eq:Conv:n=4} holds.
\end{step}
\noindent\textit{Proof of Step \ref{conv:n=4}:} For the sake of clarity, we define
$$U_{X,\alpha}:=X^{\beta_1}X^{\beta_2} \partial_i \utza \partial_j \utza,$$
For $R>0$, we write 
\begin{eqnarray*}
	\int_{B_{\delta\ma^{-1}}(0)}U_{X,\alpha} \, dX&=&\int_{B_R(0)} U_{X,\alpha} \, dX+\int_{B_{\delta\ma^{-1}}(0)\backslash B_R(0)} U_{X,\alpha} \, dX.
\end{eqnarray*}
It follows from the strong convergence of \eqref{conv:D12F} that 
\begin{eqnarray}\label{eq:UXalpha}
\ma^2	\int_{B_{\delta\ma^{-1}}(0)}U_{X,\alpha} \, dX&=&\ma^2\int_{B_{\delta\ma^{-1}}(0)\backslash B_R(0)} U_{X,\alpha} \, dX+O(\ma^2).
\end{eqnarray}
We define $\theta_{\alpha}:=\frac{1}{\sqrt{|\ln(\ma) |}}$,  $s_\alpha=\ma^{-\theta_{\alpha}}$ and $t_\alpha=\ma^{\theta_{\alpha}-1}$. We have as $\alpha\to +\infty$ that
\begin{equation}\label{eq:aide1}
\left\{\begin{array}{lll}
s_\alpha=o(t_\alpha);&\ma=o(s_\alpha);&  \ma t_\alpha=o(1)\\
\\
\ln(\frac{1}{t_\alpha\ma})=o(\ln(\frac{1}{\ma}));& \ln(s_\alpha)=o(\ln(\frac{1}{\ma}));&   \ln(\frac{t_\alpha}{s_\alpha})\simeq \ln(\frac{1}{\ma}).
\end{array}\right.
\end{equation}
With \eqref{eq:UXalpha}, we get
\begin{eqnarray}
\ma^2	\int_{B_{\delta\ma^{-1}}(0)}U_{X,\alpha} \, dX&=&\ma^2\left( \int_{B_{\delta\ma^{-1}}(0)\backslash B_{t_{\alpha}}(0)} U_{X,\alpha} \, dX+\int_{B_{t_{\alpha}}(0)\backslash B_{s_{\alpha}}(0)} U_{X,\alpha} \, dX\right. \nonumber\\
&&\left. +\int_{B_{s_{\alpha}}(0)\backslash B_{R}(0)} U_{X,\alpha} \, dX\right) + o\left(\ma^2 \ln\left( \frac{1}{\ma}\right)\right).\label{eq:i0}
\end{eqnarray}
Thanks again to the pointwise control \eqref{ineq:tu} and to \eqref{eq:aide1}, we have 
\begin{equation}\label{eq:i1}
\ma^2	\int_{B_{\delta\ma^{-1}}(0)\backslash B_{t_{\alpha}}(0)} U_{X,\alpha} \, dX
= O\left(\ma^2  \int_{t_\alpha}^{\delta\ma^{-1}}\frac{1}{r}\, dr\right) =O\left(\ma^2\ln\left(\frac{\delta}{t_{\alpha}\ma}\right)\right)=o\left(\ma^2\ln\left(\frac{1}{\ma}\right)\right), 
\end{equation}
and, 
\begin{equation}\label{eq:i2}
\ma^2\int_{B_{s_{\alpha}}(0)\backslash B_{R}(0)} U_{X,\alpha} \, dX=O\left(\ma^2\ln\left(\frac{s_{\alpha}}{R}\right)\right)=o\left(\ma^2\ln\left(\frac{1}{\ma}\right)\right).
\end{equation}
Since $\ma=o(s_{\alpha})$ as $\alpha\to +\infty$, it follows from the result of Step \ref{procedure:conv:n=4} that 
\begin{align*}
	\lim_{\alpha\to +\infty}\sup_{B_{t_{\alpha}}(0)\backslash B_{s_{\alpha}}(0)}\left|   |X|^{2n-2}\partial_i\uta\partial_j\uta-K_n\frac{X_iX_j}{|X|^2}\right|    =0 ,
\end{align*}
where $K_n:=\left( (n-2)K^{n-2}\right)^2$. Therefore, we have that
\begin{eqnarray}
\ma^2\int_{B_{t_{\alpha}}(0)\backslash B_{s_{\alpha}}(0)} U_{X,\alpha} \, dX&=&\ma^2\int_{B_{t_{\alpha}}(0)\backslash B_{s_{\alpha}}(0)} \frac{X^{\beta_1}X^{\beta_2}}{|X|^{2n-2}}\left(|X|^{2n-2}\partial_i \utza \partial_j \utza-K_n\frac{X^iX^j}{|X|^2}\right)  \, dX\nonumber\\
&+&K_n\ma^2\int_{B_{t_{\alpha}}(0)\backslash B_{s_{\alpha}}(0)}  X^{\beta_1}X^{\beta_2}\frac{X^iX^j}{|X|^{2n}} \, dX\nonumber\\
&=& K_n\ma^2\int_{B_{t_{\alpha}}(0)\backslash B_{s_{\alpha}}(0)}  X^{\beta_1}X^{\beta_2}\frac{X^iX^j}{|X|^{2n}} \, dX\nonumber\\
&+&o\left( \ma^2\int_{B_{t_{\alpha}}(0)\backslash B_{s_{\alpha}}(0)} \frac{X^{\beta_1}X^{\beta_2}}{|X|^{2n-2}} \, dX\right)\nonumber\\
&=& K_n\ma^2\int_{B_{t_{\alpha}}(0)\backslash B_{s_{\alpha}}(0)}  X^{\beta_1}X^{\beta_2}\frac{X^iX^j}{|X|^{2n}} \, dX
+o\left( \ma^2\ln(\frac{1}{\ma})\right)\label{eq:o0}.
\end{eqnarray}
On the other hand, since $\ln(\frac{t_\alpha}{s_\alpha})\simeq \ln(\frac{1}{\ma})$, we have 
\begin{eqnarray}
\int_{B_{t_{\alpha}}(0)\backslash B_{s_{\alpha}}(0)}  X^{\beta_1}X^{\beta_2}\frac{X^iX^j}{|X|^{2n}} \, dX&=&\int_{S^{n-1}}\sigma^i \sigma^j \sigma^{\beta_1}\sigma^{\beta_2} \, d\sigma \int_{s_\alpha}^{t_\alpha} \frac{1}{r}\, dr\nonumber\\
&=&\ln\left(\frac{t_\alpha}{s_\alpha}\right)\int_{S^{n-1}}\sigma^i \sigma^j \sigma^{\beta_1}\sigma^{\beta_2} \, d\sigma \nonumber \\
&=&\ln\left(\frac{1}{\ma}\right)\left( 1+o(1)\right) \int_{S^{n-1}}\sigma^i \sigma^j \sigma^{\beta_1}\sigma^{\beta_2} \, d\sigma .\label{eq:o1}
\end{eqnarray}
Combining the equations \eqref{eq:o0}, \eqref{eq:o1}, we obtain that 
\begin{equation}\label{eq:i3}
\ma^2\int_{B_{t_{\alpha}}(0)\backslash B_{s_{\alpha}}(0)} U_{X,\alpha} \, dX=K_n\ma^2\ln\left(\frac{1}{\ma}\right)\left( 1+o(1)\right) \int_{S^{n-1}}\sigma^i \sigma^j \sigma^{\beta_1}\sigma^{\beta_2} \, d\sigma,
\end{equation}
with $K_n:=\left( (n-2)K^{n-2}\right)^2$. The equations \eqref{eq:i0},  \eqref{eq:i1}, \eqref{eq:i2} and \eqref{eq:i3} yields the result of this Lemma.  This ends the proof of Lemma \ref{eq:Conv:n=4}. \qed

\medskip\noindent We are left with estimating $D_\alpha$. Recall that $-\left( \Delta_{\hat{g}}-\Delta_{Eucl}\right)  = \left( \hat{g}^{ij}-\delta^{ij}\right) \partial_{ij}- \hat{g}^{ij}\hat{\Gamma}_{ij}^k\partial_k$ and the Christoffel symbols are $\hat{\Gamma}_{ij}^k:=\frac{1}{2}\hat{g}^{kp}\left(\partial_{i}\hat{g}_{jp}+\partial_{j}\hat{g}_{ip}-\partial_{p}\hat{g}_{ij}\right)$. Then, we write
\begin{equation}\label{def:D}
D_{\alpha}= D_{1,\alpha}-D_{2,\alpha}+\frac{n-2}{2}D_{3,\alpha}-\frac{n-2}{2}D_{4,\alpha},
\end{equation}
where
\begin{align}
	D_{1,\alpha}:=\int_{B_{\delta}(0)}\left( \hat{g}^{ij}-\delta^{ij}\right) X^l \partial_l \uha\partial_{ij}  \uha \, dX &&\bb{, }&\hspace{0.07cm} D_{2,\alpha}:=\int_{B_{\delta}(0)} \hat{g}^{ij}X^l \hat{\Gamma}_{ij}^k\partial_l \uha \partial_k \uha  \, dX,\nonumber\\
	D_{3,\alpha}:=\int_{B_{\delta}(0)}\left( \hat{g}^{ij}-\delta^{ij}\right)\uha\partial_{ij}\uha\, dX &&\bb{, }&\hspace{0.07cm} D_{4,\alpha}:=\int_{B_{\delta}(0)} \hat{g}^{ij} \uha \hat{\Gamma}_{ij}^k\partial_k\uha  \, dX.\label{def:Di}
\end{align}
We now estimate the $D_{i,\alpha}$'s separately. Note that, since the exponential map is normal at $0$, we have that $\partial_{\beta_1}\hat{g}^{ij}(0)=0 $ for all $i,j,\beta_1=1,...,n$. For $i,j,k=1,...,n$, the Taylor formula at $0$ writes
\begin{equation}\label{gamma0}
\Gamma_{ij}^k(X)=\sum_{m=1}^nX^{m}\partial_{m}\Gamma_{ij}^k(0)+O\left(|X|^2 \right),
\end{equation}
and, 
\begin{eqnarray}\label{g0}
\hat{g}^{ij}(X)-\delta^{ij}=\frac{1}{2}\sum_{\beta_1,\beta_2=1}^n X^{\beta_1}X^{\beta_2}\partial_{\beta_1\beta_2}\hat{g}^{ij}(0)+O\left( |X|^3\right). 
\end{eqnarray}

\begin{step}\label{step:55} We claim that
\begin{equation}\label{a01}
\int_{B_{\delta}(0)}|X|^3 |\nabla \uha |^2   \, dX=\left\{\begin{array}{ll} o(\ma^2) &\hbox{ if }n\geq5, \\
O(\ma^2) &\hbox{ if } n= 4,\\
O(\delta\ma)& \hbox{ if } n= 3.
\end{array}\right.
\end{equation}
And,
\begin{eqnarray}\label{a02}
\int_{B_{\delta}(0)} |X|\uha^2  \, dX=\left\{\begin{array}{ll} o(\ma^2) &\hbox{ if }n\geq5, \\
O(\ma^2) &\hbox{ if } n= 4,\\
O(\delta\ma)& \hbox{ if } n= 3.
\end{array}\right.
\end{eqnarray}
\end{step}
\noindent{\it Proof of Step \ref{step:55}:} Estimate \eqref{a02}, this is a direct consequence of the upper bound \eqref{ineq:hu}. We deal with \eqref{a01}. We fix $R>0$ and we write
\begin{eqnarray*}
&&\int_{B_{\delta}(0)}|X|^3 |\nabla \uha |^2   \, dX
=\int_{B_{R\ma}(0)}|X|^3 |\nabla \uha |^2   \, dX+\int_{B_{\delta}(0)\backslash B_{R\ma}(0)}|X|^3 |\nabla \uha |^2   \, dX\\
&&=\ma^{3} \int_{B_{R}(0)}|X|^3 |\nabla \utza|^2   \, dX+\int_{B_{\delta}(0)\backslash B_{R\ma}(0)}|X|^3 |\nabla \uha |^2   \, dX,
\end{eqnarray*}
where $\utza$ is as in \eqref{eq:uta}. It follows from the strong convergence of \eqref{conv:D12F} that $\int_{B_{R}(0)}|X|^3 |\nabla \utza|^2   \, dX=O(1)$ as $\alpha\to +\infty$. As for \eqref{a02}, the control of the integral on $B_\delta(0)\setminus B_{R\ma}(0)$ is a direct consequence of \eqref{ineq:nablatu}. This yields \eqref{a02}. This proves the claim.\qed

\begin{step} We estimate $D_{2,\alpha}$ for $n\geq 4$.
\end{step}
\noindent Since $\hat{g}^{ij}-\delta^{ij}=O(|X|^2)$ as $X\to 0$ and by \eqref{gamma0}, we estimate as $\alpha\to +\infty$ that,
\begin{eqnarray}
D_{2,\alpha}
&=&\delta^{ij}\int_{B_{\delta}(0)} X^l\hat{\Gamma}_{ij}^k\partial_l \uha \partial_k \uha  \, dX+O\left( \int_{B_{\delta}(0)} |X|^3\hat{\Gamma}_{ij}^k\partial_l \uha \partial_k \uha  \, dX\right) \nonumber\\
&=&\sum_{m=1}^n\partial_{m}\hat{\Gamma}_{ii}^k(0)\int_{B_{\delta}(0)}X^lX^{m} \partial_l \uha \partial_k \uha  \, dX+O\left( \int_{B_{\delta}(0)}|X|^3 |\nabla \uha |^2   \, dX\right).\label{c0}
\end{eqnarray}

\noindent The change of variable $Y=\ma^{-1} X$ and the estimates \eqref{c0} and \eqref{a01} yield
\begin{equation}\label{d2alpha0}
	D_{2,\alpha}	=\ma^2\sum_{m=1}^n\partial_{m}\hat{\Gamma}_{ii}^k(0)\int_{B_{\delta\ma^{-1}}(0)}X^lX^{m} \partial_l \utza \partial_k \utza  \, dX+\left\{\begin{array}{ll} o(\ma^2) &\hbox{ if }n\geq5, \\
		O(\ma^2) &\hbox{ if } n= 4.
	\end{array}\right.
\end{equation}
{\bf Case 1:} $n\geq 5$. In this case, $X\mapsto |X|^2\left((1+|X|^2)^{(1-n)/2}\right)^2 \bb{ in } L^1(\rn)$. Therefore, going back to \eqref{d2alpha0}, it follows from the strong convergence \eqref{conv:D12F}, the pointwise convergence of Step \ref{step00}, the estimate control \eqref{ineq:nablatu} and  the Lebesgue dominated convergence theorem that
\begin{eqnarray*}
	D_{2,\alpha}
	&=&\ma^2\sum_{m=1}^n\partial_{m}\hat{\Gamma}_{ii}^k(0)\left(\int_{B_R(0)}X^lX^{m} \partial_l \tilde{u} \partial_k \tilde{u}\, dX+ \int_{\rr^n \backslash B_R(0)}X^lX^{m} \partial_l \tilde{u} \partial_k \tilde{u}\, dX\right) +o\left( \ma^2\right)\\
	&=&\ma^2\sum_{m=1}^n\partial_{m}\hat{\Gamma}_{ii}^k(0)\int_{\rr^n}X^lX^{m} \partial_l \tilde{u} \partial_k \tilde{u}\, dX+o\left( \ma^2\right)\\
\end{eqnarray*}
From the radial symmetry of $\tilde{u}$, we infer that
\begin{eqnarray*}
D_{2,\alpha}	&=&\ma^2\sum_{m=1}^n\partial_{m}\hat{\Gamma}_{ii}^k(0)\int_{\rr^n}X^mX^{k} \left(  \tilde{u}^{\prime}\right)^2 \, dX+o\left( \ma^2\right)\\
&=&\ma^2\sum_{m=1}^n\partial_{m}\hat{\Gamma}_{ii}^k(0)\int_{\rr^n}X^mX^{k} |\nabla \ut|^2 \, dX+o\left( \ma^2\right)\\
&=&\ma^2\sum_{m,k=1}^n\partial_{m}\hat{\Gamma}_{ii}^k(0)\int_{S^{n-1}}\theta^m\theta^ k\, d\theta \int_{0}^{+\infty}r^2 |\nabla_r \ut|^2 \, dr+o\left( \ma^2\right).
\end{eqnarray*}
With the symmetries of the sphere, we have that $ \int_{S^{n-1}}\theta^m\theta^ k\, d\theta =\delta^{mk}\frac{\omega_{n-1}}{n}$. Hence, 
\begin{eqnarray*}
	D_{2,\alpha}	&=&\frac{\ma^2}{n}\omega_{n-1}\sum_{k=1}^n\partial_{k}\hat{\Gamma}_{ii}^k(0)\int_{0}^{+\infty}r^2 |\nabla_r \ut|^2 \, dr+o\left( \ma^2\right)\\
	&=&\frac{\ma^2}{n}\sum_{k=1}^n\partial_{k}\hat{\Gamma}_{ii}^k(0)\int_{\rr^n}|X|^2|\nabla \tilde{u}|^2 \, dX  +o\left( \ma^2\right).
\end{eqnarray*}
{\bf Case 2: $n=4$.} It follows from \eqref{d2alpha0} and the convergence of Lemma \ref{eq:Conv:n=4} that  
\begin{eqnarray*}
	\lim_{\alpha\to +\infty} \frac{1}{\ma^{2}\ln(\frac{1}{\ma})}D_{2,\alpha}&=&4K^4\sum_{m=1}^k \partial_m \hat{\Gamma}_{ii}^k(0)\int_{\sn} (\sigma^l)^2 \sigma^m\sigma^k\, d\sigma \\
	&=&\omega_3K^4\partial_{k}\hat{\Gamma}_{ii}^k(0), \bb{ thanks to }\eqref{eq:4sphere}.
\end{eqnarray*}

\begin{step} We estimate $D_{3,\alpha}$ for $n\geq 4$.
\end{step}
\noindent Thanks to \eqref{g0}, \eqref{a01} and \eqref{a02}, integrations by parts yield 
\begin{eqnarray}
	D_{3,\alpha}&=&\int_{B_{\delta}(0)}\left( \hat{g}^{ij}-\delta^{ij}\right)\uha\partial_{ij}\uha\, dX\nonumber\\
	&=& -\left( \int_{B_{\delta}(0)}\partial_i \hat{g}^{ij}\uha\partial_{j}\uha\, dX +\int_{B_{\delta}(0)}\left( \hat{g}^{ij}-\delta^{ij}\right)\partial_i\uha\partial_{j}\uha\, dX\right)\nonumber \\
	&&+O\left( \int_{\partial B_\delta(0)}|X|^2|\nabla\uha|\uha\, d\sigma\right)  \nonumber\\
	&=& -\frac{1}{2}\left( \int_{B_{\delta}(0)}\partial_i\hat{g}^{ij}\partial_{j}\left( \uha\right)^2 \, dX +\partial_{\beta_1\beta_2}\hat{g}^{ij}(0)\int_{B_{\delta}(0)}X^{\beta_1}X^{\beta_2}\partial_i\uha\partial_{j}\uha\, dX\right)\nonumber\\
	&+& O\left( 	\int_{B_{\delta}(0)}|X|^3 |\nabla \uha |^2   \, dX\right)+O\left( \int_{\partial B_\delta(0)}|X|^2|\nabla\uha|\uha\, d\sigma\right) \nonumber \\
	&=& -\frac{1}{2}\left( -\int_{B_{\delta}(0)}\partial_{ij}\hat{g}^{ij} \uha^2 \, dX +\partial_{\beta_1\beta_2}\hat{g}^{ij}(0)\int_{B_{\delta}(0)}X^{\beta_1}X^{\beta_2}\partial_i\uha\partial_{j}\uha\, dX\right)\nonumber\\
	&&+ O\left( 	\int_{B_{\delta}(0)}|X|^3 |\nabla \uha |^2   \, dX\right)+O\left( \int_{\partial B_\delta(0)}\left(|X|^2|\nabla\uha|\uha+|X|\uha^2\right)\, d\sigma\right) \label{est:d3} 
	\end{eqnarray}
	\begin{eqnarray}
	D_{3,\alpha}&=& -\frac{1}{2}\left( -\int_{B_{\delta}(0)}\partial_{ij}\hat{g}^{ij} \uha^2 \, dX +\partial_{\beta_1\beta_2}\hat{g}^{ij}(0)\int_{B_{\delta}(0)}X^{\beta_1}X^{\beta_2}\partial_i\uha\partial_{j}\uha\, dX\right)\nonumber\\
	&&+\left\{\begin{array}{ll} o(\ma^2) &\hbox{ if }n\geq5, \\
		O(\ma^2) &\hbox{ if } n= 4.
	\end{array}\right.\nonumber\\
	&=& -\frac{1}{2}\left( -\partial_{ij}\hat{g}^{ij}(0) \int_{B_{\delta}(0)}\uha^2 \, dX +\partial_{\beta_1\beta_2}\hat{g}^{ij}(0)\int_{B_{\delta}(0)}X^{\beta_1}X^{\beta_2}\partial_i\uha\partial_{j}\uha\, dX\right)\nonumber\\
	&+& O\left(\int_{B_{\delta}(0)} |X|\uha^2 \, dX \right) +\left\{\begin{array}{ll} o(\ma^2) &\hbox{ if }n\geq5, \\
		O(\ma^2) &\hbox{ if } n= 4.
	\end{array}\right. \nonumber
	\end{eqnarray}
Therefore, with a change variable  $Y=\ma^{-1}X$, we infer that 
\begin{eqnarray}
		D_{3,\alpha}&=&\frac{\ma^2}{2}\left(  \partial_{ij}\hat{g}^{ij}(0) \int_{B_{\delta\ma^{-1}}(0)}\utza^2 \, dX\right.\nonumber  \\
		&-&\left. \partial_{\beta_1\beta_2}\hat{g}^{ij}(0)\int_{B_{\delta\ma^{-1}}(0)}X^{\beta_1}X^{\beta_2}\partial_i\utza\partial_{j}\utza\, dX\right) +\left\{\begin{array}{ll} o(\ma^2) &\hbox{ if }n\geq5, \\
		O(\ma^2) &\hbox{ if } n= 4.
		\end{array}\right. \label{d3alpha}
\end{eqnarray}
{\bf Case 1: $n\geq 5$.} Here again, we have that  $$X\mapsto |X|^2\left((1+|X|^2)^{(1-n)/2}\right)^2\in L^1(\rn)$$ for $n\geq 5$. Therefore, going back to \eqref{d3alpha},  it follows from the strong convergence \eqref{conv:D12F}, the pointwise convergence of Step \ref{step00}, the pointwise control \eqref{ineq:nablatu}, the Lebesgue dominated convergence theorem that   
\begin{eqnarray*}
	D_{3,\alpha}
	&=&\frac{\ma^2}{2}\left(  \partial_{ij}\hat{g}^{ij}(0) \int_{\rr^n}\ut^2 \, dX -\partial_{\beta_1\beta_2}\hat{g}^{ij}(0)\int_{\rr^n}X^{\beta_1}X^{\beta_2}\partial_i\ut\partial_{j}\ut\, dX\right)+o\left( \ma^2\right).
\end{eqnarray*}
{\bf Case 2: $n=4$.} 
Withing again the equations \eqref{convergen4} and  \eqref{convergen40}, we get 
\begin{equation}\label{eq:ud3alpha}
\lim_{\alpha \to +\infty}\frac{1}{\ln(\frac{1}{\ma})}\int_{B_{\delta\ma^{-1}}(0)}\utza^2 \, dX=\omega_3 K^4.
\end{equation}
Using again \eqref{conv:attends:nabla} and \eqref{eq:4sphere}, 
\begin{equation}\label{eq:derived3alpha}
\lim_{\alpha \to +\infty}\frac{\partial_{\beta_1\beta_2}\hat{g}^{ij}(0)}{\ln(\frac{1}{\ma})}\int_{B_{\delta\ma^{-1}}(0)} X^{\beta_1}X^{\beta_2}\partial_i\utza\partial_{j}\utza\, dX=\frac{\omega_3}{6}\left(\partial_{\beta_1\beta_1}\hat{g}^{ii}(0) +2 \partial_{ij}\hat{g}^{ij}(0)\right).
\end{equation}
Then, it follows from \eqref{d3alpha}, \eqref{eq:ud3alpha} and \eqref{eq:derived3alpha} that  
\begin{eqnarray*}
	\lim_{\alpha \to +\infty}\frac{1}{\ma^{2}\ln(\frac{1}{\ma})} D_{3,\alpha}=\frac{\omega_3}{12} K^4\left(  4\partial_{ij}\hat{g}^{ij}(0)-\partial_{\beta_1\beta_1}\hat{g}^{ii}(0)\right). 
\end{eqnarray*}

\begin{step} We estimate $D_{4,\alpha}$ for $n\geq 4$.
\end{step}
Using again integrations by parts, we get 
\begin{eqnarray*}
	D_{4,\alpha}	&=&-\frac{1}{2}\sum_{k=1}^n\int_{B_{\delta}(0)} \partial_k \hat{\Gamma}_{ii}^k \uha^2  \, dX+\frac{1}{2}\int_{\partial B_{\delta}(0)}  \hat{\Gamma}_{ii}^k\uha^2  \vec{\nu}_k\, dX+O\left(\int_{B_{\delta}(0)}  |X| \uha^2  \, dX \right).
\end{eqnarray*}
With \eqref{a02}, we get 
\begin{equation*}
D_{4,\alpha}=-\frac{1}{2}\sum_{k=1}^n\partial_k \hat{\Gamma}_{ii}^k(0)\int_{B_{\delta}(0)}  \uha^2  \, dX+\left\{\begin{array}{ll} o(\ma^2) &\hbox{ if }n\geq5, \\
O(\ma^2) &\hbox{ if } n= 4.
\end{array}\right.
\end{equation*}
\noindent With a change of variable $Y=\ma^{-1}X$, we obtain that
\begin{eqnarray}\label{D4alpha}                       
	D_{4,\alpha}&=&- \frac{\ma^2}{2}\sum_{k=1}^n\partial_k\hat{\Gamma}_{ii}^k(0) \int_{B_{\delta\ma^{-1}}(0)} \utza^2 \, dX+\left\{\begin{array}{ll} o(\ma^2) &\hbox{ if }n\geq5, \\
	O(\ma^2) &\hbox{ if } n= 4.
	\end{array}\right.
\end{eqnarray}
{\bf Case 1: $n\geq 5$.} Here $X\mapsto (1+|X|^2)^{1-n/2}\in L^2(\rn)$. Then with the pointwise convergence of  Step \ref{step00} and the pointwise control \eqref{ineq:tu}, Lebesgue's dominated convergence theorem yields
\begin{eqnarray*}
	D_{4,\alpha}&=&- \frac{\ma^2}{2}\partial_k\hat{\Gamma}_{ii}^k(0) \int_{\rr^n} \tilde{u}^2 \, dX+o(\ma^2).
\end{eqnarray*}
{\bf Case 2: $n=4$.} It follows from \eqref{convergen4} and \eqref{convergen40} that, 
\begin{equation}\label{last0}
\lim_{\alpha\to +\infty}\frac{1}{\ln(\frac{1}{\ma})} \int_{B_{\delta\ma^{-1}}(0)}\utza^2   \, dX =\omega_3 K^4.
\end{equation}
Combining \eqref{D4alpha} and \eqref{last0}, 
\begin{eqnarray*}
	\lim_{\alpha\to +\infty}	\frac{1}{\ma^2\ln(\frac{1}{\ma})}D_{4,\alpha}=- \frac{\omega_3}{2} K^4 \partial_k\hat{\Gamma}_{ii}^k(0).
\end{eqnarray*}

\begin{step} We now deal with $D_{1,\alpha}$ for $n\geq 4$.
\end{step}
\noindent We write 
$$ b^{ijl}=(g^{ij}-\delta^{ij})X^l \bb{ for all } i,j,l=1,...,n.$$
Next, we have that 
	\begin{eqnarray}
D_{1,\alpha}&=&\int_{B_{\delta}(0)}b^{ijl} \partial_l \uha\partial_{ij} \uha\, dX\nonumber\\
&=&-\int_{B_{\delta}(0)}\partial_i b^{ijl} \partial_l \uha\partial_{j} \uha\, dX-\int_{B_{\delta}(0)}b^{ijl}  \partial_j \uha\partial_{il} \uha\, dX\nonumber\\
&+&\int_{\partial B_{\delta}(0)}b^{ijl}  \partial_j \uha\partial_{l} \uha\vec{\nu}_i\, dX.\label{d0}
\end{eqnarray}
	Using the integrations by parts and since $b^{ijl}=b^{jil}$, we get that 
\begin{eqnarray*}
	D_{1,\alpha}^{\prime}&:=&-\int_{B_{\delta}(0)}b^{ijl}  \partial_j \uha\partial_{il} \uha\, dX\\
	&=&\int_{B_{\delta}(0)}b^{ijl} \partial_{lj} \uha\partial_{i} \uha\, dX+\int_{B_{\delta}(0)}\partial_l\left( b^{ijl}\right)  \partial_{j} \uha\partial_{i} \uha\, dX-\int_{\partial B_{\delta}(0)}b^{ijl} \partial_{j} \uha\partial_{i} \uha\vec{\nu_l}\, dX\\
	&=&\int_{B_{\delta}(0)}b^{jil} \partial_{li} \uha\partial_{j} \uha\, dX+\int_{B_{\delta}(0)}\partial_l\left( b^{ijl}\right)  \partial_{j} \uha\partial_{i} \uha\, dX-\int_{\partial B_{\delta}(0)}b^{ijl} \partial_{j} \uha\partial_{i} \uha\vec{\nu_l}\, dX\\
	&=&\int_{B_{\delta}(0)}b^{ijl} \partial_{j} \uha\partial_{il} \uha\, dX+\int_{B_{\delta}(0)}\partial_l\left( b^{ijl}\right)  \partial_{j} \uha\partial_{i} \uha\, dX-\int_{\partial B_{\delta}(0)}b^{ijl} \partial_{j} \uha\partial_{i} \uha\vec{\nu_l}\, dX\\
	&=&-D^{\prime}_{1,\alpha}+\int_{B_{\delta}(0)}\partial_l\left( b^{ijl}\right)  \partial_{j} \uha\partial_{i} \uha\, dX-\int_{\partial B_{\delta}(0)}b^{ijl} \partial_{j} \uha\partial_{i} \uha\vec{\nu}_l\, dX,
\end{eqnarray*}
	then,
\begin{eqnarray}\label{d1}
2 D_{1,\alpha}^{\prime}=\int_{B_{\delta}(0)}\partial_l b^{ijl}  \partial_{j} \uha\partial_{i} \uha\, dX-\int_{\partial B_{\delta}(0)}b^{ijl} \partial_{j} \uha\partial_{i} \uha\vec{\nu}_l\, dX.
\end{eqnarray}
	Combining \eqref{d0} and \eqref{d1}, we get 
\begin{eqnarray}
	D_{1,\alpha}&=&-\int_{B_{\delta}(0)}\partial_i b^{ijl} \partial_l \uha\partial_{j} \uha\, dX+\frac{1}{2}\int_{B_{\delta}(0)}\partial_l b^{ijl}  \partial_{j} \uha\partial_{i} \uha\, dX\nonumber\\
	&+&\int_{\partial B_{\delta}(0)}b^{ijl}  \partial_j \uha\partial_{l} \uha\vec{\nu}_i\, dX-\frac{1}{2}\int_{\partial B_{\delta}(0)}b^{ijl} \partial_{j} \uha\partial_{i} \uha\vec{\nu}_l\, dX\label{exp:D4}
\end{eqnarray}
With \eqref{ineq:nablahu}, we get 
\begin{eqnarray*}
	\int_{\partial B_{\delta}(0)}b^{ijl}  \partial_j \uha\partial_{l} \uha\vec{\nu}_i\, dX&=&O\left( \ma^{n-2}\right) .
\end{eqnarray*} 
Therefore, thanks of \eqref{g0} and \eqref{a01}, we obtain that 
\begin{eqnarray*}
	D_{1,\alpha}&=&-\int_{B_{\delta}(0)}\partial_i b^{ijl} \partial_l \uha\partial_{j} \uha\, dX+\frac{1}{2}\int_{B_{\delta}(0)}\partial_l b^{ijl}  \partial_{j} \uha\partial_{i} \uha\, dX+O\left( \ma^{n-2}\right)\\
	&=&-\int_{B_{\delta}(0)}X^{l}\partial_i \hat{g}^{ij} \partial_l \uha\partial_{j} \uha\, dX-\int_{B_{\delta}(0)}\left( \hat{g}^{ij}-\delta^{ij}\right) \delta^{il} \partial_{l} \uha\partial_{j} \uha\, dX\\
	&&+\frac{1}{2}\int_{B_{\delta}(0)}X^{l}\partial_l \hat{g}^{ij}  \partial_{j} \uha\partial_{i} \uha\, dX+\frac{n}{2}\int_{B_{\delta}(0)}\left( \hat{g}^{ij}-\delta^{ij}\right)   \partial_{j} \uha\partial_{i} \uha\, dX+O\left( \ma^{n-2}\right)\\
	&=&-\partial_{i\beta_1}\hat{g}^{ij}(0)\int_{B_{\delta}(0)}X^{\beta_1}X^{l} \partial_l \uha\partial_{j} \uha\, dX-\frac{1}{2}\partial_{\beta_1\beta_2}\hat{g}^{ij}(0)\int_{B_{\delta}(0)}X^{\beta_1}X^{\beta_2} \delta^{il} \partial_{l} \uha\partial_{j} \uha\, dX\\
	&&+\frac{1}{2}\partial_{l\beta_1}\hat{g}^{ij}(0)\int_{B_{\delta}(0)}X^{\beta_1}X^{l} \partial_{i} \uha\partial_{j} \uha\, dX+\frac{n}{4}\partial_{\beta_1\beta_2}\hat{g}^{ij}(0)\int_{B_{\delta}(0)}X^{\beta_1}X^{\beta_2}  \partial_{i} \uha\partial_{j} \uha\, dX\\
	&&+O\left( 	\int_{B_{\delta}(0)}|X|^3 |\nabla \uha |^2   \, dX\right)  +O\left( \ma^{n-2}\right)\\
	&=&-\partial_{i\beta_1}\hat{g}^{ij}(0)\int_{B_{\delta}(0)}X^{\beta_1}X^{l} \partial_l \uha\partial_{j} \uha\, dX-\frac{1}{2}\partial_{\beta_1\beta_2}\hat{g}^{ij}(0)\int_{B_{\delta}(0)}X^{\beta_1}X^{\beta_2}  \partial_{i} \uha\partial_{j} \uha\, dX\\
	&&+\frac{1}{2}\partial_{l\beta_1}\hat{g}^{ij}(0)\int_{B_{\delta}(0)}X^{\beta_1}X^{l} \partial_{i} \uha\partial_{j} \uha\, dX+\frac{n}{4}\partial_{\beta_1\beta_2}\hat{g}^{ij}(0)\int_{B_{\delta}(0)}X^{\beta_1}X^{\beta_2}  \partial_{i} \uha\partial_{j} \uha\, dX\\
	&&+\left\{\begin{array}{ll} o(\ma^2) &\hbox{ if }n\geq5, \\
		O(\ma^2) &\hbox{ if } n= 4.
	\end{array}\right. 
\end{eqnarray*}
With \eqref{a01}, we observe that
\begin{eqnarray*}
D_{1,\alpha}	&=-\partial_{i\beta_1}\hat{g}^{ij}(0)\int_{B_{\delta}(0)}X^{\beta_1}X^{l} \partial_l \uha\partial_{j} \uha\, dX-\frac{1}{2}\partial_{\beta_1\beta_2}\hat{g}^{ij}(0)\int_{B_{\delta}(0)}X^{\beta_1}X^{\beta_2}  \partial_{i} \uha\partial_{j} \uha\, dX\nonumber\\
&+\frac{1}{2}\partial_{l\beta_1}\hat{g}^{ij}(0)\int_{B_{\delta}(0)}X^{\beta_1}X^{l} \partial_{i} \uha\partial_{j} \uha\, dX \\
&+\frac{n}{4}\partial_{\beta_1\beta_2}\hat{g}^{ij}(0)\int_{B_{\delta}(0)}X^{\beta_1}X^{\beta_2}  \partial_{i} \uha\partial_{j} \uha\, dX+\left\{\begin{array}{ll} o(\ma^2) &\hbox{ if }n\geq5, \\
O(\ma^2) &\hbox{ if } n= 4.
\end{array}\right.\nonumber
\end{eqnarray*}
With the change of variable $Y=\ma^{-1}X$, we get 
\begin{eqnarray}\label{D1alpha0}
	 D_{1,\alpha}&=&\ma^2\left( -\partial_{i\beta_1}\hat{g}^{ij}(0) \int_{B_{\delta\ma^{-1}}(0)}X^{\beta_1} X^l \partial_l \utza\partial_{j} \utza\, dX\right.\nonumber \\
	&& \left. +\frac{n}{4}\partial_{\beta_1\beta_2}\hat{g}^{ij}(0)\int_{B_{\delta\ma^{-1}}(0)}X^{\beta_1}X^{\beta_2}\partial_i\utza\partial_{j}\utza\, dX \right) +\left\{\begin{array}{ll} o(\ma^2) &\hbox{ if }n\geq5, \\
		O(\ma^2) &\hbox{ if } n= 4.
	\end{array}\right.
\end{eqnarray}
\noindent{\bf Case 1: $n\geq 5$.} We have that $X\mapsto   |X|^2\left( 1+|X|^{n-1}\right)^{-2}\in L^{1}(\rr^n)$. Therefore, the strong convergence \eqref{conv:D12F}, the pointwise convergence of Step \ref{step00}, the pointwise control \eqref{ineq:nablatu} and Lebesgue's Convergence Theorem yield
\begin{eqnarray*}
	D_{1,\alpha}&=&\ma^2\left( -\partial_{i\beta_1}\hat{g}^{ij}(0) \int_{\rr^n}X^{\beta_1} X^l \partial_l \ut\partial_{j} \ut\, dX\right. \\
	&& \left. +\frac{n}{4}\partial_{\beta_1\beta_2}\hat{g}^{ij}(0)\int_{\rr^n}X^{\beta_1}X^{\beta_2}\partial_i\ut\partial_{j}\ut\, dX \right) +o\left( \ma^2\right) ,
\end{eqnarray*}
Moreover, since $\tilde{u}$ is a radially symmetrical, we get
\begin{eqnarray*}
	D_{1,\alpha}&=&\ma^2\left( -\partial_{i\beta_1}\hat{g}^{ij}(0) \int_{\rr^n}X^{\beta_1} X^j\left(   \ut^{\prime}\right)^2\, dX\right. \\
	&& \left. +\frac{n}{4}\partial_{\beta_1\beta_2}\hat{g}^{ij}(0)\int_{\rr^n}X^{\beta_1}X^{\beta_2}\partial_i\ut\partial_{j}\ut\, dX \right) +o\left( \ma^2\right)\\
	&=&\ma^2\left( -\partial_{i\beta_1}\hat{g}^{ij}(0)\int_{S^{n-1}}\theta^{\beta_1} \theta^j \, d\theta\int_{0}^{+\infty}r^{n+1}|\nabla_r \ut |^2\, dr\right. \\
	&& \left. +\frac{n}{4}\partial_{\beta_1\beta_2}\hat{g}^{ij}(0)\int_{\rr^n}X^{\beta_1}X^{\beta_2}\partial_i\ut\partial_{j}\ut\, dX \right) +o\left( \ma^2\right)\\
	&=&\ma^2\left( -\frac{1}{n}\omega_{n-1}\partial_{\beta_1 i}\hat{g}^{ij}(0)\delta^{\beta_1 j} \int_{0}^{+\infty}r^{n+1}|\nabla_r \ut |^2\, dr\right. \\
	&& \left. +\frac{n}{4}\partial_{\beta_1\beta_2}\hat{g}^{ij}(0)\int_{\rr^n}X^{\beta_1}X^{\beta_2}\partial_i\ut\partial_{j}\ut\, dX \right) +o\left( \ma^2\right),
\end{eqnarray*}
then, 
\begin{eqnarray*}
	D_{1,\alpha}	&=&\ma^2\left( -\frac{1}{n}\partial_{i j}\hat{g}^{ij}(0)\int_{\rr^n}|X|^2|\nabla \ut |^2\, dX\right. \\
	&& \left. +\frac{n}{4}\partial_{\beta_1\beta_2}\hat{g}^{ij}(0)\int_{\rr^n}X^{\beta_1}X^{\beta_2}\partial_i\ut\partial_{j}\ut\, dX \right) +o\left( \ma^2\right).
\end{eqnarray*}
{\bf Case 2: $n=4$.} It follows from  \eqref{conv:attends:nabla} and \eqref{eq:4sphere} that 
\begin{equation}\label{eq:derived1alpha}
\frac{\partial_{i\beta_1}\hat{g}^{ij}(0)}{\ln(\frac{1}{\ma})} \int_{B_{\delta\ma^{-1}}(0)}X^{\beta_1} X^l \partial_l \utza\partial_{j} \utza\, dX=\omega_3 K^4\partial_{ij}\hat{g}^{ij}(0).
\end{equation}
Therefore, combining \eqref{D1alpha0}, \eqref{eq:derived1alpha} and \eqref{eq:derived3alpha}
\begin{eqnarray*}
	\lim_{\alpha\to +\infty}\frac{\ma^{-2}}{\ln(\frac{1}{\ma})}D_{1,\alpha}=\frac{\omega_3}{6}K^4\left( -4\partial_{ij}\hat{g}^{ij}(0)
	+\partial_{\beta_1\beta_1}\hat{g}^{ii}(0) \right).
\end{eqnarray*}

\begin{step}\label{stepdalpha}
We get as $\alpha \to +\infty$ that,
	\begin{equation*}
	D_{\alpha}=\left\{\begin{array}{cl}  O\left( \delta\ma\right) &\hbox{ if }n=3, \\ \\
	-\ma^2\ln(\frac{1}{\ma})\frac{1}{6}Scal_g(x_0)\omega_3K^4\left( 1+o(1)\right) &\hbox{ if }n=4, \\ \\
-\ma^2  c_{n,s}Scal_g(x_0) \int_{\rr^n}\tilde{u}^2\, dX + o\left( \ma^2\right) &\hbox{ if } n\geq 5.
	\end{array}\right.
	\end{equation*}
	where $c_{n,s}$, $K$ are defined in \eqref{eq:cns}, \eqref{defK}.
\end{step}
\noindent{\it Proof of Step \ref{stepdalpha}:} 
For $n\geq 5$, the steps above yield
\begin{eqnarray*}
	D_{1,\alpha}+\frac{n-2}{2}D_{3,\alpha}&=&\ma^2\left( -\frac{1}{n}\partial_{i j}\hat{g}^{ij}(0)\int_{\rr^n}|X|^2|\nabla \ut |^2\, dX+\frac{n-2}{4}\partial_{ij}\hat{g}^{ij}(0)\int_{\rr^n}  \tilde{u}^2\, dX\right. \\
	&& \left. +\frac{1}{2}\partial_{\beta_1\beta_2}\hat{g}^{ij}(0)\int_{\rr^n}X^{\beta_1}X^{\beta_2}\partial_i\ut\partial_{j}\ut\, dX \right) +o\left( \ma^2\right)\\
	&=&\ma^2\left( -\frac{1}{n}\partial_{i j}\hat{g}^{ij}(0)\int_{\rr^n}|X|^2|\nabla \ut |^2\, dX+\frac{n-2}{4}\partial_{ij}\hat{g}^{ij}(0)\int_{\rr^n}  \tilde{u}^2\, dX\right. \\
	&& \left. +\frac{w_{n-1}^{-1}}{2}\partial_{\beta_1\beta_2}\hat{g}^{ij}(0)\int_{S^{n-1}}\sigma^{i}\sigma^{j}\sigma^{\beta_1}\sigma^{\beta_2} \, d\sigma \int_{\rr^n}|X|^2|\nabla \tilde{u}|^2\, dX \right) +o\left( \ma^2\right).
\end{eqnarray*}
It follows from \cite{B} that
\begin{equation}\label{eq:4sphere}
	\int_{S^{n-1}}\sigma^{i}\sigma^{j}\sigma^{\beta_1}\sigma^{\beta_2} \, d\sigma=\frac{1}{n(n+2)}w_{n-1}\left( \delta^{ij}\delta^{\beta_1\beta_2}+\delta^{i\beta_1}\delta^{j\beta_2}+\delta^{i\beta_2}\delta^{j\beta_1}\right).
\end{equation}
Therefore we get
\begin{eqnarray*}
	D_{1,\alpha}+\frac{n-2}{2}D_{3,\alpha}	&=& \ma^2\left( \frac{1}{n} \left( -\partial_{ij}\hat{g}^{ij}(0)+\frac{1}{2(n+2)}\left( \partial_{\beta_1\beta_1}\hat{g}^{ii}(0)+2\partial_{ij}\hat{g}^{ij}(0)\right) \right) \int_{\rr^n}|X|^2|\nabla \ut |^2\, dX\right. \\
	&+&\left.\frac{n-2}{4}\partial_{ij}\hat{g}^{ij}(0)\int_{\rr^n}  \tilde{u}^2\, dX\right) + o\left( \ma^2\right)\\
	&=& \ma^2\left( \frac{1}{n} \left( -\frac{n+1}{(n+2)}\partial_{ij}\hat{g}^{ij}(0)+\frac{1}{2(n+2)} \partial_{\beta_1\beta_1}\hat{g}^{ii}(0) \right) \int_{\rr^n}|X|^2|\nabla \ut |^2\, dX\right. \\
	&+&\left.\frac{n-2}{4}\partial_{ij}\hat{g}^{ij}(0)\int_{\rr^n}  \tilde{u}^2\, dX\right) + o\left( \ma^2\right).
\end{eqnarray*}
Therefore, using the definition of $D_{\alpha}$, we get 
\begin{eqnarray}
D_{\alpha}&=& D_{1,\alpha}-D_{2,\alpha}+\frac{n-2}{2}D_{3,\alpha}-\frac{n-2}{2}D_{4,\alpha}\nonumber\\
&=& \ma^2\left( \frac{1}{n} \left( -\frac{n+1}{(n+2)}\partial_{ij}\hat{g}^{ij}(0)+\frac{1}{2(n+2)} \partial_{\beta_1\beta_1}\hat{g}^{ii}(0)-\partial_k \hat{\Gamma}_{ii}^k(0)\right) \int_{\rr^n}|X|^2|\nabla \ut |^2\, dX\right. \nonumber\\
&&\quad\quad+\left.\frac{n-2}{4}\left( \partial_{ij}\hat{g}^{ij}(0)+\partial_k \hat{\Gamma}_{ii}^k(0)\right) \int_{\rr^n}  \tilde{u}^2\, dX\right) + o\left( \ma^2\right).\label{Dalpha0}
\end{eqnarray}
Since $\hat{g}^{ij}\hat{g}_{ij}=Id_n$ and $\partial_k \hat{g}^{ij}(0)=0$, we get 
\begin{eqnarray}\label{g1}
\partial_{ij}\hat{g}^{ij}(0)=-\partial_{ij}\hat{g}_{ij}(0) \bb{ for } i,j=1,...,n.
\end{eqnarray}
Combining \eqref{Dalpha0} and \eqref{g1}, we obtain that 
\begin{eqnarray}
	D_{\alpha}&=& \ma^2\left( \frac{1}{n} \left( \frac{n+1}{(n+2)}\partial_{ij}\hat{g}_{ij}(0)-\frac{1}{2(n+2)} \partial_{\beta_1\beta_1}\hat{g}_{ii}(0)-\partial_k \hat{\Gamma}_{ii}^k(0)\right) \int_{\rr^n}|X|^2|\nabla \ut |^2\, dX\right.\nonumber\\
	&&\quad\quad+\left.\frac{n-2}{4}\left( -\partial_{ij}\hat{g}_{ij}(0)+\partial_k \hat{\Gamma}_{ii}^k(0)\right) \int_{\rr^n}  \tilde{u}^2\, dX\right) + o\left( \ma^2\right).\label{a05}
\end{eqnarray}
Thanks again of Jaber \cite{J1}, for $s\in (0,2)$ we have that 
\begin{equation}\label{rap0}
\frac{\int_{\rr^n}|X|^2|\nabla \ut |^2\, dX}{\int_{\rr^n}  \tilde{u}^2\, dX}= \frac{n\left(n-2 \right) \left( n+2-s\right) }{2\left(2n-2-s \right) }.
\end{equation}
On the other hand, Cartan's expansion of the metric $g$ in the exponential chart $\left( B_{\delta}(x_0), \exp_{x_0}^{-1}\right)$ yields
\begin{eqnarray*}
	g_{ij}(x)=\delta_{ij}+\frac{1}{3}R_{ipqj}(x_0)x^px^q+O\left( r^3\right),
\end{eqnarray*}
where $r:=d_g(x,x_0)$.
Since $g$ is $C^{\infty}$, we have that
\begin{eqnarray*}
\partial_{\beta_1\beta_2}g_{ij}(x_0)&=&\frac{1}{3}\left( R_{ipqj}(x_0)\delta_{p\beta_2}\delta_{q\beta_1}+R_{ipqj}(x_0)\delta_{p\beta_1}\delta_{q\beta_2}\right)\nonumber\\
&=&\frac{1}{3}\left( R_{i\beta_2\beta_1j}(x_0)+R_{i\beta_1\beta_2j}(x_0)\right).
\end{eqnarray*}
The Bianchi identities and the symmetry yields $R_{iijj}=0$ and $R_{ij\alpha\beta}=-R_{ij\beta\alpha}$. Since $R_{ijij}=Scal_g(x_0)$, we then get that
 
\begin{equation}\label{g2}
\sum_{i,j=1}^n	\partial_{ij}g_{ij}(x_0)= \frac{1}{3}Scal_g(x_0) \bb{ and } 	\sum_{i,\beta_1=1}^n	\partial_{\beta_1\beta_1}g_{ii}(x_0)= -\frac{2}{3}Scal_g(x_0).
\end{equation}
Now, using the Christoffel symbols and $\partial_k g^{ij}(0)=0$, we obtain that  
\begin{eqnarray*}
	\partial_k\Gamma_{ii}^k(x_0)&=&\frac{1}{2}\left( \partial_{ki} g_{ik}+\partial_{ki}g_{ik}-\partial_{kk}g_{ii}\right)\left( x_0\right)\\
	&=&\frac{1}{6}\left( R_{iikk}+R_{ikik}-R_{iikk}+R_{ikik}-2R_{ikki}\right),
\end{eqnarray*}
then we have
\begin{equation}\label{g3}
\sum_{i,k=1}^n\partial_k\Gamma_{ij}^k(x_0)= \frac{2}{3}Scal_g(x_0).
\end{equation}
Combining \eqref{a05}, \eqref{rap0}, \eqref{g2} and \eqref{g3}, we get that
\begin{eqnarray*}
	D_{\alpha}&=& \ma^2\left( \left( \frac{n+1}{n+2}\partial_{ij}\hat{g}_{ij}(0)-\frac{1}{2(n+2)} \partial_{\beta_1\beta_1}\hat{g}_{ii}(0)-\partial_k \hat{\Gamma}_{ij}^k(0)\right) \frac{\left(n-2 \right) \left( n+2-s\right) }{2\left(2n-2-s \right) }\right.\\
	&&+\left.\frac{n-2}{4}\left( -\partial_{ij}\hat{g}_{ij}(0)+\partial_k \hat{\Gamma}_{ii}^k(0)\right)   \right)\int_{\rr^n}\tilde{u}^2\, dX + o\left( \ma^2\right)\\
	&=&\ma^2\left( \left( \frac{n+1}{3(n+2)}+\frac{1}{3(n+2)} -\frac{2}{3}\right)Scal_g(x_0) \frac{\left(n-2 \right) \left( n+2-s\right) }{2\left(2n-2-s \right) }\right.\\
	&&+\left.\frac{n-2}{4}Scal_g(x_0) \left( -\frac{1}{3}+\frac{2}{3}\right)   \right)\int_{\rr^n}\tilde{u}^2\, dX + o\left( \ma^2\right)\\
	&=&\ma^2Scal_g(x_0) \left( -\frac{\left(n-2 \right) \left( n+2-s\right) }{6\left(2n-2-s \right) }+\frac{n-2}{12}  \right)\int_{\rr^n}\tilde{u}^2\, dX + o\left( \ma^2\right)\\
\end{eqnarray*}
This ends Step \ref{stepdalpha} for $n\geq 5$. The analysis is similar when $n=4$.\qed

\begin{step} We prove Theorem \ref{th1} for $n\geq4$.
\end{step}
First,	using the definitions \eqref{bcd} of $B_{\alpha}$, $C_{\alpha}$ and $D_{\alpha}$ and thanks to Steps \ref{stepcalpha} to \ref{stepdalpha}, we get
 \begin{equation}\label{Aalpha}
C_{\alpha}+D_\alpha=\left\{\begin{array}{cl} \ma^2\left(a_{\infty}(x_0)-c(n,s) Scal_g(x_0) \right) \int_{\rr^n}\tilde{u}^2\, dX+o(\ma^2) &\hbox{ if }n\geq5, \\ \\
\ma^2 \ln (\frac{1}{\ma })\left( \left(a_{\infty}(x_0)-\frac{1}{6} Scal_g(x_0) \right)\omega_3 K^4+o(1)\right)  &\hbox{ if } n= 4,\\ \\
O(\delta\ma)&\hbox{ if } n= 3.
\end{array}\right.
\end{equation}
We distinguish three cases:

\medskip\noindent {\bf Case 1:} If $n\geq 5$, \eqref{balpha} and \eqref{Aalpha} yield
$$\left(a_{\infty}(x_0)-c(n,s) Scal_g(x_0) \right) \int_{\rr^n}\tilde{u}^2\, dX=O\left(\ma^{n-4}\right)=o(1) $$
and then $a_{\infty}(x_0)=c(n,s) Scal_g(x_0)$, with $c(n,s)$ as in \eqref{eq:cns}.

\smallskip\noindent {\bf Case 2:} If $n=4$, the proof is similar.

\begin{step}\label{step:n3} We prove Theorem \ref{th1} when $n=3$.
\end{step}

\smallskip\noindent{\it Step \ref{step:n3}.1:} We claim that
\begin{equation}\label{claim:n3}
C_\alpha+D_\alpha=O(\delta\ma)\hbox{ as }\alpha\to +\infty.
\end{equation}
We prove the claim. It follows from \eqref{calpha} that 
$$C_\alpha=O\left(\int_{B_\delta(0)}\uha^2\, dx+\int_{\partial B_\delta(0)}|X|\uha^2\, d\sigma\right)$$
as $\alpha\to +\infty$. The definitions \eqref{def:Di} of $D_{i,\alpha}$, $i=2,4$ yield
$$D_{2,\alpha}=O\left(\int_{B_\delta(0)}|X|^2|\nabla \uha|^2\, dx\right)\hbox{ and }D_{4,\alpha}=O\left(\int_{B_\delta(0)}|X|\cdot |\nabla \uha|\uha\, dx\right).$$ 
The identity \eqref{exp:D4} yields
$$D_{1,\alpha}=O\left(\int_{B_\delta(0)}|X|^2|\nabla \uha|^2\, dx+\int_{\partial B_\delta(0)}|X|^3|\nabla\uha|^2\, d\sigma\right).$$
It follows from \eqref{est:d3} that
$$D_{3,\alpha}=O\left(\int_{B_\delta(0)}(\uha^2+|X|^2|\nabla \uha|^2)\, dx+\int_{\partial B_\delta(0)}(|X|^3|\nabla\uha|^2+|X|\uha^2)\, d\sigma\right).$$
Therefore, with \eqref{def:D}, we get that
$$C_\alpha+D_{\alpha}=O\left(\int_{B_\delta(0)}(\uha^2+|X|^2|\nabla \uha|^2)\, dx+\int_{\partial B_\delta(0)}(|X|^3|\nabla\uha|^2+|X|\uha^2)\, d\sigma\right).$$
It then follows from \eqref{est:co} and \eqref{m41} that
$$C_\alpha+D_{\alpha}=O\left(\ma\int_{B_\delta(0)} |X|^{-2}\, dx+\ma\int_{\partial B_\delta(0)} |X|^{-1}\, d\sigma\right)=O(\delta\ma)$$
since $n=3$. This proves \eqref{claim:n3}.

\smallskip\noindent{\it Step \ref{step:n3}.2:} We write the Green's function as in \eqref{green0} with $\beta_{x_0}\in C^{2}(M\backslash \{x_0\})\cap C^{0,\theta}(M)$ where $\theta \in (0, 1)$. 
In particular, 
\begin{equation}\label{greenfunction}
\hat{G}_{x_0}(x):=G(x_0,\exp_{x_0}(X))=\frac{1}{4\pi |X|}+\beta_{x_0}(\exp_{x_0}(X)) \bb{ for all } x\in B_{\delta}(0).
\end{equation}
Combining \eqref{balpha} and \eqref{Aalpha}, we get that 
\begin{equation}\label{m51}
d_3^2 \int_{\partial B_{\delta}(0)} \delta \left(\frac{|\nabla \hat{G}_{x_0} |^2}{2}  +\hat{a}_{\infty}\frac{\hat{G}_{x_0}^2}{2}\right) -\frac{1}{\delta}\left( \langle X, \nabla \hat{G}_{x_0}\rangle^2 +\frac{1}{2}\langle X, \nabla \hat{G}_{x_0}\rangle \hat{G}_{x_0} \right) \, d\sigma=O(\delta)
\end{equation}
From \eqref{greenfunction}, we denote that:
\begin{eqnarray*}
&&|\nabla \hat{G}_{x_0} |^2=\frac{1}{16\pi^2\delta^4}+|\nabla \beta_{x_0} |^2-\frac{1}{2\pi \delta^{3}}\langle X,\nabla\beta_{x_0}\rangle, \\
&&\hat{G}_{x_0}^2=\frac{1}{16\pi^2\delta^2}+\beta_{x_0}^2+\frac{1}{2\pi \delta}\langle X,\beta_{x_0}\rangle,\\
  &&\langle X, \nabla \hat{G}_{x_0}\rangle^2=\frac{1}{16\pi^2\delta^2}+\langle X,\nabla\beta_{x_0}\rangle^2-\frac{1}{2\pi \delta}\langle X,\nabla\beta_{x_0}\rangle, \\
&& \langle X, \nabla \hat{G}_{x_0}\rangle \hat{G}_{x_0} =-\frac{1}{16\pi^2\delta^2}-\frac{1}{4\pi\delta}\beta_{x_0}+\frac{\langle X,\nabla\beta_{x_0}\rangle}{4\pi\delta}+\langle X,\nabla\beta_{x_0}\rangle\beta_{x_0}.
\end{eqnarray*}
We replace all the terms in \eqref{m51} and get 
\begin{eqnarray*}
d_n^2 \int_{\partial B_{\delta}(0)} \delta \left(\frac{|\nabla \beta_{x_0} |^2}{2}  +\hat{a}_{\infty}\frac{\beta_{x_0}^2}{2}\right)-\frac{1}{4\pi \delta^{2}}\langle X,\nabla\beta_{x_0}\rangle +\hat{a}_{\infty} \left( \frac{1}{16\pi^2\delta}+\frac{1}{2\pi}\langle X,\beta_{x_0}\rangle\right) \\
-\frac{1}{\delta}\left( \langle X,\nabla\beta_{x_0}\rangle^2 -\frac{1}{2\pi \delta}\langle X,\nabla\beta_{x_0}\rangle+\frac{1}{2}\left( \frac{-1}{4\pi\delta}\beta_{x_0} +\frac{\langle X,\nabla\beta_{x_0}\rangle}{4\pi\delta}+\langle X,\nabla\beta_{x_0}\rangle\beta_{x_0}\right) \right) \, d\sigma=O(\delta).
\end{eqnarray*}
We note that,
\begin{eqnarray*}
\lim_{\delta\to 0} \sup_{X\in \partial B_{\delta}(0)}\langle X,\nabla\beta_{x_0}\rangle=0.
\end{eqnarray*}
We multiply the last equation by $\delta^2$ and passing the limit $\delta\to 0$, we get that $\beta_{x_0}(x_0):=\beta_{x_0}(\exp_{x_0}(0))=0$, so the mass vanishes at $x_0$.\qed
\section{Proof of Theorem \ref{theo:B_s(g)}}\label{sec:theo:B_s(g)}

 We assume that that there is no extremal of \eqref{ineq:opt}, i.e. for all $u\in H_1^2(M)\backslash\{0\}$, we have that 
\begin{equation}\label{eq:nonextr}
\vv u \vv_{\crit}^{2}< \mu_{s}(\rr^n)^{-1}\left( \int_M |\nabla u|^2 \, dv_g+ B_s(g)\int_M u^2 \, dv_g\right). 
\end{equation} 
We define $a_{\alpha}(x):= B_s(g)-\frac{1}{\alpha}>0$ for all  $x\in M$ and $\alpha>0$ large. We define the functional  
$$J_{\alpha}(u)=\frac{\int_M\left(  |\nabla u|^2 dv_g+a_{\alpha}u^2\right) \, dv_g}{\left( \int_M\frac{u^{\crit}}{d_g(x,x_0)^s}\, dv_g\right)^{\frac{2}{\crit}}} \bb{ for } u\in H^{2}_1(M)\backslash \{0\}.$$
It then follows from the definition of $B_s(g)$ that there exists $w\in H^{2}_1(M)\backslash \{0\}$ such that $J_{\alpha}(w)< \mu_{s}(\rr^n)$, and therefore
\begin{equation}\label{ext:0}
\inf_{u\in N_s(M)} J_{\alpha}(u)< \mu_{s}(\rr^n),
\end{equation} 
where $$ N_s(M):=\{ u\in H_1^{2}(M) , \vv u\vv_{\crit}=1\}.$$
Set $$\ll_{\alpha}:= \inf_{u\in N_s(M)} J_{\alpha}(u).$$
By the assumption \eqref{ext:0}, classical arguments (see Jaber \cite{J1}) yield the existence of a non negative minimizer $u_{\alpha}\in N_s(M)$ for $\ll_{\alpha}$. The Euler-Lagrange's equation for $u_{\alpha}$ is then
\begin{equation}\label{ext:1}
\Delta_g u_{\alpha}+a_{\alpha} u_{\alpha}=\ll_{\alpha} \frac{u_{\alpha}^{\crit-1}}{d_g(x,x_0)^s}\hbox{ in }H_1^2(M).
\end{equation}
It follows from the regularity and the maximum principle of Jaber \cite{J1} that $u_{\alpha}\in C^{0,\beta_1}(M)\cap C_{loc}^{2,\beta_2}(M\backslash\{x_0\})$, $\beta_1\in (0,\min(1,2-s))$, $\beta_2\in (0,1)$ and $u_{\alpha}>0$.
\begin{step}\label{step_u_0}
	We claim that,
	$$u_{\alpha}\rightharpoonup 0 \bb{ weakly in } H_{1}^2(M) \bb{ as } \alpha\to +\infty.$$
\end{step}
\noindent{\textit{Proof of Step \ref{step_u_0}:}} For any $\alpha>0$, we have $\vv u_{\alpha}\vv_{\crit}=1$ and $J_{\alpha}(u_{\alpha})=\ll_{\alpha}<\mu_{s}(\rr^n)$, and we get $(u_{\alpha})_{\alpha>0}$ is bounded in $H_1^2(M)$. Then, there exists $u_0\in H_1^2(M)$ such that $u_{\alpha}\rightharpoonup u_0$ in $H_{1}^2(M)$ as $\alpha\to +\infty$. If $u_0\not\equiv 0$, taking the limit in equation \eqref{ext:1}, we get 
\begin{equation}\label{eq:equal}
\Delta_g u_0+ B_s(g) u_0=\ll \frac{u_0^{\crit-1}}{d_g(x,x_0)^s},
\end{equation}
where $\lambda:=\lim_{\alpha\to +\infty}\lambda_\alpha$ (up to extraction). It follows from \eqref{eq:nonextr} and \eqref{eq:equal} that
\begin{eqnarray*}
	\mu_{s}(\rr^n)< \frac{\left( \int_M |\nabla u_0|^2 \, dv_g+ B_s(g)\int_M u_0^2 \, dv_g\right)}{\vv u_0 \vv_{\crit}^{2}}= \ll  \left(\int_M \frac{u_0^{\crit}}{d_g(x,x_0)} \right)^{1-\frac{2}{\crit}} 
\end{eqnarray*}
Since $\ll \leq \mu_{s}(\rr^n)$ and
$$\int_M \frac{u_0^{\crit}}{d_g(x,x_0)} \, dv_g \leq \lim_{\alpha\to +\infty} \inf \int_M \frac{u_{\alpha}^{\crit}}{d_g(x,x_0)} \, dv_g=1.$$
We get that, $\ll = \mu_{s}(\rr^n)$.
Therefore, $u_0$ is a nonzero extremal function of \eqref{eq:nonextr} contradiction. Hence $u_0\equiv 0$.

\qed
\begin{step}\label{step_lamda}
	We claim that, 
	$$\ll_{\alpha}\to \mu_{s}(\rr^n)\bb{ as } \alpha \to +\infty.$$
\end{step}
\noindent{\textit{Proof of Step \ref{step_lamda}:}}
Since for all $\alpha>0$, we have $0<\ll_{\alpha} <\mu_{s}(\rr^n)$ then, up to a subsequence, $\ll_{\alpha}\to \ll \leq \mu_{s}(\rr^n)$ as $\alpha\to +\infty$. We proceed by contradiction and assume that $\ll\neq \mu_{s}(\rr^n)$. Then there exists $\ep_0$ and $\alpha_0>0$ such that for all $\alpha>\alpha_0$,
$$\mu_{s}(\rr^n)>\ll+\ep_0.$$
Thanks of Jaber \cite{J1}, there exists $B_1$ such that for all $\alpha>0$, we have 

\begin{equation*}
\left(\int_M \frac{|\ua|^{\crit}}{d_g(x,x_0)^s}\, dv_g\right)^{\frac{2}{\crit}}\leq \mu_s(\rr^n)^{-1}\int_M|\nabla \ua|_g^2\, dv_g+B_1\int_M\ua^2\, dv_g.
\end{equation*}
By the last Step and since the embedding of $H_1^2(M)$ in $L^2(M)$ is compact, 
$$u_{\alpha}\to 0 \bb{ in } L^2(M) \bb{ as } \alpha \to +\infty.$$
Therefore, $\vv u_{\alpha}\vv_{\crit}=1$ and $J_{\alpha}(u_{\alpha})=\ll_{\alpha}$, we have

$$1\leq \frac{\ll_\alpha}{\ll+\ep_0} +o(1).$$
Letting $\alpha\to +\infty$ in the last relation, we obtain that $\frac{\ll}{\ll+\ep_0}\geq 1$, a contradiction since $\ll\geq 0$ and $\ep_0>0$. \qed

\medskip \noindent We are in position to prove Theorem \ref{th1}. Since $u_\alpha$ above satisfies the hypothesis of Theorem \ref{th1}, we have that $B_s(g)=c_{n,s} Scal_g(x_0)$ if $n> 4$ and $m_{B_s(g)}(x_0)=0$ if $n=3$.  \qed
	\section{Appendix}
These results and their proofs are closely to the work of Jaber \cite{J2}. We fix $\delta_0 \in (0, i_g(M))$ where $i_g(M)>0$ is the injectivity radius of $(M,g)$. We fix $\eta_0\in C^{\infty}(B_{\frac{3\delta_0}{4}}(0)\subset \rr^n)$ such that $\eta\equiv 1$ in $B_{\frac{\delta_0}{2}}(0)$.
\begin{theo}\label{theo1}
	We let $\left( u_{\alpha}\right) _{\alpha>0}$ be as in \eqref{m2}. We consider a sequence $(z_{\alpha})_{\alpha>0}\in M$ such that $\lim_{\alpha\to +\infty} z_{\alpha}=x_0.$
	We define the function 
	\begin{eqnarray*}
	\tilde{u}_{\alpha}(X):=\mu_{\alpha}^{\frac{n-2}{2}} u_{\alpha}(\exp_{z_{\alpha}}(\mu_{\alpha}X))\bb{ for all } X\in B_{\mu_{\alpha}^{-1}\delta_0}(0)\subset \rr^n,
	\end{eqnarray*}
 where $\exp_{z_{\alpha}}:B_{\delta_0}(0)\to B_{\delta_0}(z_{\alpha})\subset M$ is the exponential map at $z_{\alpha}$. We assume that 
	$$ d_g(x_{\alpha},z_{\alpha})=O(\ma) \bb{ when } \alpha\to +\infty.$$
	Then, $$ d_g(z_{\alpha},x_0)=O(\ma) \bb{ when } \alpha\to +\infty,$$
	and, up to a subsequence, $\eta_{\alpha}\tilde{u}_{\alpha}\to \tilde{u}$ weakly in $D^{2}_1(\rr^n)$ (the completion of $C^\infty_c(\rn)$ for $\Vert\nabla\cdot\Vert_2)$ and uniformly in $C^{0,\beta}_{loc}(\rr^n)$, for all $\beta \in \left( 0,\min\{1,2-s\}\right)$, where $\eta_{\alpha}:=\eta_0(\ma \cdot)$ and 
	$$  \tilde{u}(X)=\left(\frac{\left( c_0^{2-s}(n-2)(n-s)\mu_s(\rn)^{-1}\right)^{\frac{1}{2}} }{c_0^{2-s}+|X-X_0|^{2-s}} \right)^{\frac{n-2}{2-s}} \bb{ for all } X\in \rr^n, $$
	with $X_0\in \rr^n$, $c_0>0$. In particular, $\tilde{u}$ satisfies 
	\begin{eqnarray}\label{eq:norme1}
	\Delta_{Eucl} \tilde{u}= \mu_s(\rn)\frac{\tilde{u}^{\crit-1}}{|X-X_0|^s} \bb{ in } \rr^n \bb{ and } \int_{\rr^n}\frac{\tilde{u}^{\crit}}{|X-X_0|^s} \, dX=1,
	\end{eqnarray}
	where $Eucl$ is the Euclidean metric of $\rr^n$.\end{theo}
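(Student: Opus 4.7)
The plan is to work in normal coordinates at $z_\alpha$, exploit the fact that $\tilde u_\alpha$ is uniformly bounded by $1$ with the maximum attained at a bounded point, and combine a weak compactness argument in $D_1^2(\rr^n)$ with the classification of positive solutions of the Hardy--Sobolev equation on $\rr^n$. Set $Y_\alpha:=\mu_\alpha^{-1}\exp_{z_\alpha}^{-1}(x_\alpha)$ and $X_{0,\alpha}:=\mu_\alpha^{-1}\exp_{z_\alpha}^{-1}(x_0)$; the hypothesis $d_g(x_\alpha,z_\alpha)=O(\ma)$ ensures $|Y_\alpha|$ is bounded, so up to extraction $Y_\alpha\to Y_\infty\in\rr^n$. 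Equation \eqref{m2} pulls back to
$$\Delta_{\tilde g_\alpha}\tilde u_\alpha+\tilde a_\alpha \tilde u_\alpha=\lambda_\alpha\frac{\tilde u_\alpha^{\crit-1}}{d_{\tilde g_\alpha}(X,X_{0,\alpha})^s}\hbox{ in }B_{\delta_0\mu_\alpha^{-1}}(0),$$
with $\tilde a_\alpha(X):=\ma^2 a_\alpha(\exp_{z_\alpha}(\ma X))\to 0$, $0\le\tilde u_\alpha\le 1$, and $\tilde u_\alpha(Y_\alpha)=1$.

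The conformal change of variables gives $\|\nabla\tilde u_\alpha\|_{L^2(\tilde g_\alpha)}=\|\nabla u_\alpha\|_{L^2(g)}$, so $(\eta_\alpha\tilde u_\alpha)_\alpha$ is bounded in $D_1^2(\rr^n)$. Since the singular weight $|X-X_{0,\alpha}|^{-s}$ lies in $L^p_{loc}$ for every $p<n/s$, standard elliptic theory (Moser iteration plus the De Giorgi--Nash--Moser regularity already used in Lemma \ref{co2}) yields uniform $C^{0,\beta}_{loc}$ bounds on $\tilde u_\alpha$ for any $\beta\in(0,\min\{1,2-s\})$. Up to a subsequence I therefore obtain $\eta_\alpha\tilde u_\alpha\rightharpoonup \tilde u$ in $D_1^2(\rr^n)$ and $\tilde u_\alpha\to \tilde u$ in $C^{0,\beta}_{loc}(\rr^n)$ for some nonnegative $\tilde u\in D_1^2(\rr^n)$ with $\tilde u(Y_\infty)=1$.

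The main obstacle is to prove that $(X_{0,\alpha})_\alpha$ stays bounded, that is $d_g(z_\alpha,x_0)=O(\ma)$. Arguing by contradiction, suppose $|X_{0,\alpha}|\to+\infty$. Then for every $R>0$, $d_{\tilde g_\alpha}(X,X_{0,\alpha})\to+\infty$ uniformly on $B_R(0)$, so the right-hand side of the rescaled equation tends to zero; combined with $\tilde a_\alpha\to 0$ and $\lambda_\alpha\to\mu_s(\rr^n)$, passing to the limit yields $\Delta_{Eucl}\tilde u=0$ distributionally on $\rr^n$. Hence $\tilde u$ is a bounded nonnegative harmonic function on $\rr^n$ with $\tilde u(Y_\infty)=1$, and by Liouville $\tilde u\equiv 1$. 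But $\tilde u\in D_1^2(\rr^n)\hookrightarrow L^{2^\star}(\rr^n)$ forces $\|\tilde u\|_{2^\star}<\infty$, which is incompatible with $\tilde u\equiv 1$. This contradiction yields the desired bound, and up to a further extraction $X_{0,\alpha}\to X_0\in\rr^n$.

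Knowing $X_{0,\alpha}\to X_0$, I pass to the limit in the rescaled equation using the $C^{0,\beta}_{loc}$ convergence of $\tilde u_\alpha$ together with the local $L^1$ bound on $\tilde u_\alpha^{\crit-1}/d_{\tilde g_\alpha}(X,X_{0,\alpha})^s$ furnished by the uniform $L^\infty$ bound and $|X-X_0|^{-s}\in L^1_{loc}$. I obtain
$$\Delta_{Eucl}\tilde u=\mu_s(\rr^n)\frac{\tilde u^{\crit-1}}{|X-X_0|^s}\hbox{ in }\rr^n,\qquad 0\le \tilde u\in D_1^2(\rr^n),\ \tilde u\not\equiv 0.$$
The classification of positive $D_1^2$-solutions of this equation (Chou--Chu, Lieb \cite{lieb}) produces the explicit form of $\tilde u$ with some $c_0>0$. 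Finally, for the normalization, a change of variable in $\|u_\alpha\|_{\crit,s}=1$ together with Fatou's lemma yields $\|\tilde u\|_{\crit,s}\le 1$, while testing the limiting equation against $\tilde u$ and invoking the definition of $\mu_s(\rr^n)$ gives $\|\tilde u\|_{\crit,s}^2\le \mu_s(\rr^n)^{-1}\|\nabla\tilde u\|_2^2=\|\tilde u\|_{\crit,s}^{\crit}$, hence $\|\tilde u\|_{\crit,s}\ge 1$, completing the proof.
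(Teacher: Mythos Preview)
Your argument is correct and supplies exactly the details that the paper defers to Jaber \cite{J2}: rescaling in normal coordinates at $z_\alpha$, extracting a $C^{0,\beta}_{loc}$ limit via the uniform bound $\tilde u_\alpha\le 1$ and local elliptic regularity, ruling out $|X_{0,\alpha}|\to\infty$ by Liouville combined with the $D_1^2$ membership of the weak limit, and identifying the limit through the classification of extremals. The paper's own proof only writes down the rescaled equation and then invokes Theorem~2 of \cite{J2}, so your approach is the same as the one the paper points to.
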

	\begin{proof}
We define the metric $\bar{g}_{\alpha}(X):=\left( \exp^\star_{z_{\alpha}} g\right) (\mu_{\alpha}X)$ in $\rn$ and we consider the vector $ X_{0,\alpha}=\ma ^{-1} \exp_{z_{\alpha}}^{-1} (x_0)$.
Since $u_{\alpha}$ verifies the equation \eqref{m2}, we get $\tilde{u}_{\alpha}$ verifies also weakly  
$$\Delta_{\bar{g}_{\alpha}}\tilde{u}_{\alpha}+\tilde{a}_{\alpha} \tilde{u}_{\alpha}=\lambda_{\alpha}\frac{ \tilde{u}_{\alpha}^{\crit-1}}{d_{\bar{g}_{\alpha}}(X,X_{0,\alpha})^s} \bb{ in }\rr^n,$$
where $\tilde{a}_{\alpha}(X):=\ma^2 a_{\alpha}(\exp_{z_{\alpha}}(\ma X))\to 0 $ as $\alpha\to +\infty$.	
Next, we follow the same proof of Theorem 2 in Jaber \cite{J2} and we get Theorem \ref{theo1}.\end{proof}

\smallskip\noindent{\bf Acknowledgement:} The author thanks Fr\'ed\'eric Robert for his advice, encouragements, comments and careful reading of this work.
\bibliographystyle{abbrv}

\end{document}